\documentclass[12pt,a4paper]{amsart}

\makeatletter
\renewcommand\normalsize{%
    \@setfontsize\normalsize{11.7}{14pt plus .3pt minus .3pt}%
    \abovedisplayskip 10\p@ \@plus4\p@ \@minus4\p@
    \abovedisplayshortskip 6\p@ \@plus2\p@
    \belowdisplayshortskip 6\p@ \@plus2\p@
    \belowdisplayskip \abovedisplayskip}
\renewcommand\small{%
    \@setfontsize\small{9.5}{12\p@ plus .2\p@ minus .2\p@}%
    \abovedisplayskip 8.5\p@ \@plus4\p@ \@minus1\p@
    \belowdisplayskip \abovedisplayskip
    \abovedisplayshortskip \abovedisplayskip
    \belowdisplayshortskip \abovedisplayskip}
\renewcommand\footnotesize{%
    \@setfontsize\footnotesize{8.5}{9.25\p@ plus .1pt minus .1pt}
    \abovedisplayskip 6\p@ \@plus4\p@ \@minus1\p@
    \belowdisplayskip \abovedisplayskip
    \abovedisplayshortskip \abovedisplayskip
    \belowdisplayshortskip \abovedisplayskip}
\ifdefined\pdfpagewidth
\else
\fi
\calclayout
\makeatother

\usepackage{graphicx}%
\usepackage{multirow}%
\usepackage{amsmath,amssymb,amsfonts}%
\usepackage{amsthm}%
\usepackage{mathrsfs}%
\usepackage[title]{appendix}%
\usepackage{xcolor}%
\usepackage{textcomp}%
\usepackage{manyfoot}%
\usepackage{booktabs}%
\usepackage{algorithm}%
\usepackage{algorithmicx}%
\usepackage{algpseudocode}%
\usepackage{listings}%
\usepackage{url}
\usepackage[font=small]{caption}

\theoremstyle{plain}%
\newtheorem{theorem}{Theorem}
\newtheorem{proposition}[theorem]{Proposition}%
\newtheorem{lemma}[theorem]{Lemma}%
\newtheorem{corollary}[theorem]{Corollary}%
\newtheorem{claim}{Claim}%

\theoremstyle{remark}%
\newtheorem{remark}{Remark}%
\newtheorem{example}{Example}%

\theoremstyle{definition}%
\newtheorem{definition}{Definition}%

\begin{document}
\title[The mathmatics behind the model] {A novel mathematical model of protein interactions (2): The mathematics behind the model}

\author {Naoto Morikawa}

\begin{abstract}
This article is a sequel to “A novel mathematical model of protein interactions from the perspective of electron delocalization (2025).”, where protein molecules are modelded as loops of triangles with no singular holes inside. Here we consider two questions: (1) How can we define the shape of a molecule? (i.e., how can we define the shape of a loop with no singular holes inside?), and (2) Is the given loop a molecule? (i.e., are the holes within a loop are singular?).

These questions are answered by (1) giving a defining system of equations for the shape of a molecule using concepts from category theory, and (2) giving a equation that determines whether holes within a loop are singular using concepts from cohomology theory, respectively.

No prior knowledge of the previous paper, category theory, or cohomology theory is required. 
\end{abstract}

\keywords{discrete differential geometry, category theory, cohomology theory, quantum chemistry, protein shapes, the semantics of shapes, singurarity analysis of loops, the Penrose stairs}

\maketitle

\tableofcontents

\section{Introduction}\label{sec1}
This article is a sequel to “A novel mathematical model of protein interactions from the perspective of electron delocalization (2025)” \cite{NM1}. Here I will describe two research topics concerning the model. Since this article is self-contained, there is no need to read the previous paper,.

\begin{figure}
\centering
\captionsetup{width=1.0\linewidth}
\includegraphics{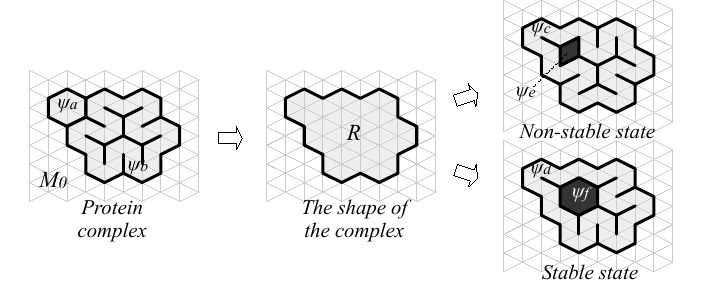}
\caption{The model of protein interactions.}
\label{figure1}
\end{figure}

In \cite{NM1}, protein molecules are modeled as loops of triangles with no singular holes inside. A hole inside is called \textit{singular} if there is no decomposition of the hole into loops. By definition, protein molecules interact if they can be fused into a loop with no singular holes (i.e., a molecule).

In Figure \ref{figure1} left, two molecules $\psi_a$ and $\psi_b$ are given on a triangular mesh $M_0$. An interaction between them occurs if there is a molecule with the same shape as $R$ (Figure \ref{figure1} middle).

In Figure \ref{figure1} top right, loop $\psi_c$ has a hole inside. Since the hole has no loop decomposition, $\psi_c$ does not yield a molecule with the same shape as $R$ (an ``non-stable'' state). 

In Figure \ref{figure1} bottom right, loop $\psi_d$ has a hole inside. In this case, since the hole is a loop $\psi_f$ of length $6$, the two loops $\psi_d$ and $\psi_f$ provides a molecule with the same shape as $R$ (a ``stable'' state). Therefore, $\psi_a$ and $\psi_b$ interact. 

This article addresses two questions we face in the computation of the interaction between molecules:
\begin{enumerate}
\item How can we define the shape of a (protein) molecule? (.i.e, how can we define the shape of a loop with no singular holes inside?), 
\item Is the given loop a (protein) molecule? (i.e., are the holes within a loop singular?).
\end{enumerate}

The former is answered by giving a defining system of equations for the shape of a molecule using concepts from category theory (Definition \ref{def_shape1}, \ref{def_shape2}, \ref{def_shape3}, and \ref{def_shape4}). The latter is answered by giving a determining equation for the question using concepts from cohomology theory (Proposition \ref{prop_defeq}). No prior knowledge of category theory \cite{CAT1}, or cohomology theory \cite{COH1, COH2} is required. A basic understanding of differential geometry should suffice. 
 
Finally, Genocript (\url{http://www.genocript.com}) is the one-man bio-venture started by Naoto Morikawa in $2000$.

\section{Basic concepts}\label{sec2}

In the following, $\mathbf{Z}$ denotes the collection of all integers, $\mathbf{N}$ denotes the collection of all natural numbers ($0 \not\in \mathbf{N}$), and $\mathbf{R}^n$ denotes the $n$-dimensional Euclidean space ($n \in \mathbf{N}$).

Loops of triangles are defined on a triangular mesh.

\begin{definition} [Mesh $M_0$]\label{def_mesh}
A \textit{triangular mesh} $M$ is a triplet 
\begin{equation*}
(Tri(M),\ Edge(M),\ Ver(M)), 
\end{equation*}
where 
\begin{align*}
&Tri(M) \text{ is the collection of all triangles of $M$},\\
&Edge(M) \text{ is the collection of all edges of the triangles in $Tri(M)$}, \\ 
&Ver(M) \text{ is the collection of all vertices of the edges in $Edge(M)$}. 
\end{align*}
$M_0$ is the triangular mesh obtained by dividing $\mathbf{R}^2$ into equilateral triangles (Figure \ref{figure1} (a)). In particular, $M_0$ is defined in $\mathbf{R}^2$.
\end{definition}
\begin{remark}
In general, a triangular mesh $M$ is defined in $\mathbf{R}^n$ ($n \in \mathbf{N}$). 
\end{remark}

\begin{definition} [$M_1 \subset M_2$]\label{def_submesh}
Given two triangular meshes $S$ and $M$. $S$ is called a \textit{submesh} of $M$ if
\begin{align*} 
Tri(S) &\subset Tri(M), \\
Edge(S)&:=\{ ab \in Edge(M)\ |\ \exists c \in Ver(M) \text{ such that } abc \in Tri(S) \}, \\
Ver(S)&:=\{ a \in Ver(M) \ |\  \exists b \in Ver(M) \text{ such that } ab \in Edge(S) \}.
\end{align*}
$SUBMESH(M)$ denotes the collection of all submeshes of $M$. Given $M_1, M_2 \in SUBMESH(M)$. \underline{We write $M_1 \subset M_2$ if $M_1 \in SUBMESH(M_2)$}.
\end{definition}

\begin{remark}
For simplicity, we only consider submeshes of $M_0$.
\end{remark}

\begin{definition} [$M_1 \cap M_2$ and $M_1 \cup M_2$]
Given $M_1, M_2 \subset M_0$. The \textit{intersection} $M_1 \cap M_2$ of $M_1$ and $M_2$ is the submesh of $M_0$ defined by
\begin{align*} 
Tri(M_1 \cap M_2)&:=Tri(M_1)\cap Tri(M_2), \\
Edge(M_1 \cap M_2)&:=\{ ab \in Edge(M_1) \  |\  \exists c \in Ver(M_1) \text{ such that } abc \in Tri(M_1 \cap M_2) \}, \\
Ver(M_1 \cap M_2)&:=\{ a \in Ver(M_1) \  |\  \exists b \in Ver(M_1) \text{ such that } ab \in Edge(M_1 \cap M_2) \}.
\end{align*}
The \textit{union} $M_1 \cup M_2$ of $M_1$ and $M_2$ is also defined in the same way, i.e.,
\begin{equation*}
Tri(M_1 \cup M_2):=Tri(M_1)\cup Tri(M_2).
\end{equation*}
By definition,
\begin{equation*}
M_1 \cap M_2 \subset M_i \subset M_1 \cup M_2 \quad (i=1,2).
\end{equation*}
\end{definition}

\begin{definition}[A trajectory of triangles]\label{def_traj}
Given $M \subset M_0$. A \textit{trajectory} on $M$ is an arrangement of triangles in $Tri(M)$ obtained by connecting triangles via common edges. In general, arrangements are non-linear (i.e., they contain branches). $TRAJ(M)$ denotes the collection of all trajectories on $M$, i.e,
\begin{equation*}
TRAJ(M):=\{ \psi \subset Tri(M) \ | \  \text{$\psi$ is a trajectory on $M$}\}.
\end{equation*}
Given $\psi \in TRAJ(M)$ and $abc \in Tri(M)$. \underline{We write $abc \in \psi$ if $abc$ is contained} \underline{in $\psi$}. Given $\psi_1, \psi_2 \in TRAJ(M)$. \underline{We write $\psi_1 \subset \psi_2$ if $abc \in \psi_2$ for $\forall abc \in \psi_1$}.
\end{definition}

\begin{definition}[Branch, regular, terminal, and isolated triangles]
Given $M \subset M_0$, $\psi \in TRAJ(M)$, and $abc \in \psi$. $abc$ is called a \textit{branch} triangle if it is connected to three adjacent triangles, a \textit{regular} triangle if it is connected to two adjacent triangles, a \textit{terminal} triangle if it is connected to one adjacent triangle, and an \textit{isolated} triangle if it is not connected to any adjacent triangles.
\end{definition}

\begin{definition} [Normal edges]
Given $M \subset M_0$, $\psi \in TRAJ(M)$, and $abc \in \psi$. The \textit{normal edges} of $abc$ are the edges that are not used in the connection in $\psi$. A branch triangle has no normal edges. A regular triangle has one normal edge. A terminal triangle has two normal edges. An isolated triangle has three normal edges. 
\end{definition}

\begin{example} 
In Figure \ref{figure1} left, two close trajectories $\psi_a$ and $\psi_b$ are shown. Both $\psi_a$ and $\psi_b$ consist of regular triangles. The normal edges are shown as thick line segments.
\end{example}

\begin{example} 
In Figure \ref{figure1} top right, the hole within $\psi_c$ consists of two terminal triangles.
\end{example}

\begin{definition} [Region] \label{def_region}
Given $M \subset M_0$ and $R \subset \mathbf{R}^2$. $R$ is called a \textit{region} on $M$ if it is a union of triangles in $Tri(M)$, i.e.,
\begin{equation*}
\exists U \subset Tri(M) \text{ such that } R=\bigcup_{abc \in U}  abc  \subset \mathbf{R}^2.
\end{equation*}
$REG(M)$ denotes the collection of all regions on $M$, i.e.,
\begin{equation*}
REG(M):=\{ \bigcup_{abc \in U}  abc  \subset \mathbf{R} \ |\  U \subset Tri(M) \}.
\end{equation*}
 A collection of regions is called a \textit{region complex}. 
 Given a region complex $R=\{r_1,\ r_2,\ \ldots,\ r_k\} \subset REG(M)$ ($k \in \mathbf{N}$). $|R|$ denotes the union of all regions in $R$, i.e., 
\begin{equation*}
|R|:=r_1 \cup r_2 \cup \cdots \cup r_k \quad \in REG(M).
\end{equation*}
\end{definition}

\begin{definition} [$|\psi |_0$]
Given $M \subset M_0$ and $\psi \in TRAJ(M)$. The \textit{region $|\psi |_0$ associated with $\psi$} is the union of all triangles in $\psi$, i.e.,
\begin{equation*}
|\psi |_0:=\bigcup_{abc \in \psi}  abc \quad  \in REG(M).
\end{equation*}
The \textit{length} $len_0(\psi)$ of $\psi$ is the number of triangles in $\psi$, i.e.,
\begin{equation*}
 len_0(\psi):=\sharp\{abc \in Tri(M) \  |\ abc \in \psi \} \quad \in \mathbf{Z}.
\end{equation*}
\end{definition}

Note that $|\psi |_0$ may have holes inside, i.e., voids located inside. 

\begin{definition} [$HOLE(\psi)$]\label{def_hole}
Given $M \subset M_0$ and $\psi \in TRAJ(M)$. $HOLE(\psi)$ denotes the collection of all holes within $|\psi |_0$, i.e.,
\begin{equation*}
HOLE(\psi):=\{ r \in REG(M_0)\ |\ \text{$r$ is a hole within $|\psi|_0$} \} \quad \subset REG(M_0).
\end{equation*}
$HOLE(\psi)$ may contain not only regions on $M$ but also regions on $M_0$. 
\end{definition}

\begin{definition} [$|\psi |$]
Given $M \subset M_0$ and $\psi \in TRAJ(M)$. The \textit{extended region} $|\psi|$ is defined by
\begin{equation*}
|\psi |:=|\psi |_0 \cup \left(\bigcup_{r \in HOLE(\psi)} r\right) \quad \in  REG(M_0).
\end{equation*}
The \textit{extended length} $len(\psi)$ of $\psi$ is the number of triangles in $|\psi |$, i.e.,
\begin{equation*}
len(\psi):=\sharp\{abc \in Tri(M) \  |\ abc \subset |\psi| \} \quad \in \mathbf{Z}.
\end{equation*}
Note that $|\psi |$ has no holes inside.
\end{definition}

\begin{definition} [Loops] \label{def_loop}
Given $M \subset M_0$ and $\psi \in TRAJ(M)$. $\psi$ is called a \textit{loop} if it is a closed trajectory consisting only of regular triangles. $LOOP(M)$ denotes the collection of all loops on $M$, i.e.,
\begin{equation*}
LOOP(M):=\{ \psi \in TRAJ(M) \  |\  \text{$\psi$ is a loop on $M$}\} \quad \subset TRAJ(M).
\end{equation*}
 A collection of loops is called a \textit{loop complex}. $|LOOP|(M)$ denotes the collection of all extended regions of loops on $M$, i.e.,
\begin{equation*}
|LOOP|(M):=  \{|\psi| \in REG(M)\ |\ \psi \in LOOP(M)\} \quad \subset REG(M).
\end{equation*}
\end{definition}

\begin{example}
In Figure \ref{figure1} top right, $\psi_c$ is a closed trajectory of length $46$. Define $M \subset M_0$ by 
\begin{equation*}
Tri(M)=\{ abc \in Tri(M_0)\ |\  abc \in \psi_c \}. 
\end{equation*}
Then, 
\begin{align*}
&\psi_c \in LOOP(M), \\
&HOLE(\psi_c) =\{ |\psi _e|\}, \\
&|\psi_c |=|\psi_c |_0 \cup |\psi _e|.
\end{align*}
Moreover, 
\begin{align*}
&len_0(\psi_c)=46,\\
&len_0(\psi_e)=len(\psi_e)=2, \\
&len(\psi_c)=48.
\end{align*}
\end{example}

\begin{example}
In Figure \ref{figure1} bottom right,, $\psi_d$ is a closed trajectory of length $42$. Define $M \subset M_0$ by 
\begin{equation*}
Tri(M)=\{ abc \in Tri(M_0)\ |\  abc \in \psi_d \}.
\end{equation*}
Then, 
\begin{align*}
&\psi_d \in LOOP(M), \\
&HOLE(\psi_d) =\{ |\psi _f|\}. \\ 
&|\psi_d |=|\psi_d |_0 \cup |\psi _f|.
\end{align*}
Moreover,
\begin{align*}
&len_0(\psi_d)=42,\\
&len_0(\psi_f)=len(\psi_f)=6, \\
&len(\psi_d)=48.
\end{align*}
\end{example}

\begin{definition} [Loop decomposition of a region] \label{def_decomp}
Given $M \subset M_0$ and $R \in REG(M)$. $R$ is called \textit{loop decomposable} if it can be divided into loops, i.e., $\exists \psi_1,\ \psi_2,\ \cdots ,\ \psi_k \in LOOP(M)$ ($k \in \mathbf{N}$) such that
\begin{equation*}
R=|\psi_1|_0 \cup |\psi_2|_0 \cup \cdots \cup |\psi_k|_0.
\end{equation*}
\end{definition}

\begin{example}
Let's consider the region $R \subset \mathbf{R}^2$ in Figure \ref{figure1} middle. Define $M \subset M_0$ by 
\begin{equation*}
Tri(M)=\{ abc \in Tri(M_0)\ |\  abc \subset R \}.
\end{equation*}
Then, 
\begin{align*}
&\psi_d,\ \psi _f \in LOOP(M), \\ 
&R=|M|=|\psi_d|_0 \cup |\psi_f|_0
\end{align*}
(Figure \ref{figure1} bottom right). That is, $R$ is loop decomposable.
\end{example}

\begin{definition}[Locally regular loops]
Given $M \subset M_0$ and $\psi \in LOOP(M)$. $\psi$ is called \textit{locally regular (on $M_0$)} if all regions in $HOLE(\psi)$ are loop decomposable. 
\end{definition}

\begin{definition}[Regular loops]
Given $M \subset M_0$ and locally regular $\psi \in LOOP(M)$. $\psi$ is called \textit{regular (on $M_0$)} if the region outside $|\psi|$ (i.e., $\mathbf{R}^2 \setminus |\psi|$) can be decomposed into trajectories with no branches, terminals, or isolated triangles.
\end{definition}

\begin{definition}[Molecules] \label{def_mol}
Given $M \subset M_0$ and $\psi \in LOOP(M)$. $\psi$ is called a \textit{molecule (on $M$)} if it is locally regular and $len(\psi)$ is finite. $MOL(M)$ denotes the collection of all molecules on $M$, i.e.,
\begin{equation*}
MOL(M):=\{ \psi \in LOOP(M) \  |\  \text{$\psi$ is a molecule}\} \quad \subset LOOP(M).
\end{equation*}
 A collection of molecules is called a \textit{molecule complex}. $|MOL|(M)$ denotes the collection of all extended regions of molecules on $M$, i.e.,
\begin{equation*}
|MOL|(M):= \{|{\psi}| \in REG(M) \ |\  \psi \in MOL(M)\} \quad \subset REG(M).
\end{equation*}
\end{definition}

\begin{definition} [Interaction between molecules]
Given $M \subset M_0$ and $\psi_1, \psi_2 \in MOL(M)$. $\psi_1$ and $\psi_2$ \textit{interact} if $\exists \psi_3 \in MOL(M)$ such that
\begin{equation*}
|\psi_3|=|\psi_1| \cup |\psi_2| \quad  \subset \mathbf{R}^2.
\end{equation*}
\end{definition}

\begin{lemma}[A condition for interaction]\label{INT_and_LD}
Given $M \subset M_0$, $\psi_1, \psi_2 \in MOL(M)$, and $\psi_3 \in LOOP(M)$. Suppose $|\psi_3|=|\psi_1| \cup |\psi_2|$. Then, $\psi_1$ and $\psi_2$ interact if all regions in $HOLE(\psi_3)$ are loop decomposable.
\end{lemma}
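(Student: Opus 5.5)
The plan is to verify directly that $\psi_3$ satisfies the two conditions of Definition \ref{def_mol}. Once $\psi_3 \in MOL(M)$ is known, the definition of interaction is met with $\psi_3$ itself as the witness, because $|\psi_3|=|\psi_1| \cup |\psi_2|$ holds by hypothesis. By assumption $\psi_3 \in LOOP(M)$, so we only need to show that $\psi_3$ is locally regular and that $len(\psi_3)$ is finite.

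\emph{Local regularity.} By definition, $\psi_3$ is locally regular exactly when every region in $HOLE(\psi_3)$ is loop decomposable. This is precisely the hypothesis of the lemma, so nothing further is needed here. One point needs care: the loops in Definition \ref{def_decomp} are taken in $LOOP(M)$, while holes are regions of $M_0$. I would read ``loop decomposable'' in the same ambient sense in both places, namely loops on the mesh generated by the hole, equivalently on $M_0$ as in the examples. With that reading the hypothesis and the definition of locally regular coincide verbatim.

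\emph{Finiteness of $len(\psi_3)$.} Since $\psi_1,\psi_2\in MOL(M)$, both $len(\psi_1)$ and $len(\psi_2)$ are finite. So $|\psi_1|$ and $|\psi_2|$ are each unions of finitely many triangles of $M_0$. The extended length counts triangles contained in $|\psi_3|=|\psi_1|\cup|\psi_2|$. A triangle of $M_0$ lies in this union only if it lies in $|\psi_1|$ or in $|\psi_2|$: distinct mesh triangles meet only along edges, so a triangle covered by the union must coincide with a triangle of one of the two regions. Hence
\begin{equation*}
len(\psi_3)\le len(\psi_1)+len(\psi_2)<\infty .
\end{equation*}

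Combining the two parts gives $\psi_3\in MOL(M)$, and therefore $\psi_1$ and $\psi_2$ interact.

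The main obstacle is not mathematical depth but bookkeeping in the definitions. There are two issues. First, the mesh on which loop decomposability and $len$ are measured ($M$ versus $M_0$) must be fixed consistently. Second, the triangle-counting step needs the fact that a triangle covered by a union of mesh triangles is one of them. I would settle the first by adopting the convention implicit in the paper's examples, and the second by the observation that distinct triangles of $M_0$ have disjoint interiors.
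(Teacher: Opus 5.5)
Your proposal is correct and follows the same route as the paper, whose proof is just ``it follows immediately from the definitions'': $\psi_3$ is locally regular verbatim by hypothesis, so once it is a molecule it is the witness required by the definition of interaction. Your check that $len(\psi_3)\le len(\psi_1)+len(\psi_2)<\infty$, using that distinct triangles of $M_0$ have disjoint interiors, spells out the one step the paper leaves implicit, and it holds whether $len$ counts triangles of $M$ (the literal definition) or of $M_0$ (as in the paper's examples).
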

\begin{proof}
It follows immediately from the definitions.
\end{proof}

\begin{remark}
Note that $REG(M) \not\subset |LOOP|(M)$ ($M \subset M_0$). That is, for some $\psi_1, \psi_2 \in MOL(M)$, 
there exists no $\psi_3 \in LOOP(M)$ such that $|\psi_3|=|\psi_1| \cup |\psi_2|$.
\end{remark}

In the following, 
\begin{enumerate}
\item Additive notation ($+$) is used to denote \underline{fusion of loops}, 
\item Set theory notation ($\cup$) is used to denote \underline{union of regions},
\item Multiplicative notation (juxtaposition) is used to denote loop complices and region complices. 
\end{enumerate}
For example, in Figure \ref{figure1}, 
\begin{align*}
\psi_d&= \psi_a + \psi_b  \quad \in LOOP(M_0), \\
|\psi_d|&= |\psi_a| \cup |\psi_b| \quad \in REG(M_0), \\
{\psi_a}{\psi_b}&=\text{``the loop complex of Figure \ref{figure1} left''} \quad \subset LOOP(M_0), \\
|\psi_a||\psi_b|&=\text{``the corresponding region complex''} \quad   \subset REG(M_0).
\end{align*}
Note that 
\begin{align*}
|\psi_a||\psi_b| &\neq  |\psi_a| \cup |\psi_b|, \\
||\psi_a||\psi_b|| &= |\psi_a\psi_b| =  |\psi_a| \cup |\psi_b|, 
\end{align*}
where  $|\psi_a||\psi_b|$ is a region complex and $ |\psi_a| \cup |\psi_b|$ is a union of regions.

\section{Research topics} \label{sec3}

\subsection{How can we define the shape of a molecule?}\label{sec31}\ 

Using concepts from category theory, we derive a system of equations that specifies the shape of a given molecule. The idea is to describe molecules using ``integral'' molecules, just as rational numbers are written using integers. The question is, ``To what extent can a molecule be specified by integral molecules?''

\begin{figure}
\centering
\captionsetup{width=1.0\linewidth}
\includegraphics{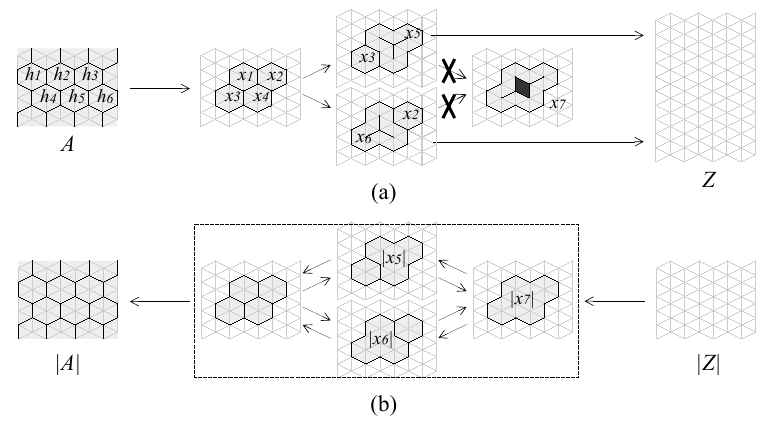}
\caption{(a) The binary relation $\rightarrow_M$ between molecule complices of $Obj(Mol)$. (b) The binary relation $\rightarrow$ between region complices of $Obj(Reg)$.}
\label{figure2}
\end{figure}

\subsubsection{Loops and regions}\label{sec311}\ 

A hexagon consisting of $6$ triangles is the shortest loop on $M_0$ (Definition \ref{def_mesh}), i.e., a loop of length $6$. Since loops of length $6$ have no holes, they are molecules. In the following, \underline{hexagons play the role which $1$ play for $\mathbf{Z}$}. That is, ''integral'' molecules are obtained by fusion of hexagons, and ''rational'' molecules are obtained by fission of ''integral'' molecules.

First, mesh $M_0$ is divided into a collection of hexagons:
\begin{definition} [$A$ and $|A|$]
``$A$'' denotes a collection of non-overlapping loops of length $6$ on $M_0$ that fill $|M_0|$ (Figure \ref{figure2} (a) left), i.e.,
\begin{equation*}
A:=\{o_i \in MOL(M_0) \ | \ i \in \mathbf{Z} \} \subset MOL(M_0)
\end{equation*}
such that
\begin{enumerate}
\item $o_i$ is a loop of length $6$ ($i \in \mathbf{Z}$),
\item $o_i$ and $o_j$ share no triangles if $o_i \neq o_j$ ($i,\ j \in \mathbf{Z}$),
\item $\bigcup_{i \in \mathbf{Z}} o_i=|M_0|$.
\end{enumerate}
$|A|$ denotes the collection of the corresponding hexagons, i.e.,
\begin{equation*}
|A|:=\{|o| \in REG(M_0)\ |\  o \in A\} \quad \subset REG(M_0).
\end{equation*}
\end{definition}

\begin{remark}
Each loop $o_i$ of $A$ is placed at a specific position on $M_0$ depending on the value of $i$. We use name tags $h_i$ and $x_i$ ($i \in \mathbf{Z}$) to represent the loops of $A$. That is, if we write $h_j \in A$, then $\exists j \in \mathbf{Z}$ such that $h_i=o_j$. For example, in Figure \ref{figure2} (a) left, $h_i$'s represent the loops of $A$ at the corresponding positions. $|h_i|$'s represents the hexagons at the corresponding positions.
\end{remark}

Next, we define binary relations (``arrows'') on three types of objects. ``Arrows'' represent the ``fusion'' of the specified object.
\begin{definition}[$Obj(Mol)$ and $\rightarrow_M$]
The objects $Obj(Mol)$ and the binary relation $\rightarrow_M$ on them are defined as follows:
\begin{enumerate}
\item $Obj(Mol)$ denotes the collection of all subsets of $MOL(M_0)$ (i.e., the \textit{power set} of $MOL(M_0)$):
\begin{equation*}
Obj(Mol):=Power(MOL(M_0)).
\end{equation*}
Elements of $Obj(Mol)$ are called \textit{molecule complices} (on $M_0$). 

Given $m=\{\psi_i\} \in Obj(Mol)$. Then, $m$ is expressed as a monomial, i.e.,
\begin{equation*}
m=\prod_{i}{\psi}_i  \in Obj(Mol),
\end{equation*}
using the multiplicative notation. The \textit{region $|m|$ of $m$} is defined by
\begin{equation*}
|m|:= \bigcup_{{\psi}_i \in m}\ |\ {\psi}_i|  \in REG(M_0).
\end{equation*}
``$Z$'' denotes the empty set of $Obj(Mol)$ (Figure \ref{figure2} (a) right), i.e., 
\begin{equation*}
Z:= \emptyset  \in Obj(Mol).
\end{equation*}

\item Given $m_1=\prod_ i {\psi}_i,\  m_2=\prod_i {\phi}_i \in Obj(Mol)$. The binary relation $\rightarrow_M$ between $m_1$ and $m_2$ is defined by 
\begin{multline*} 
\phantom{aaaaaaaa}m_1 \rightarrow_M m_2 \text{ if and only if} \\
\text{ for } \forall {\phi}_i \in m_2,\  \exists B_{\phi_i}  \subset m_1 \text{ such that } {\phi}_i =\sum_{\psi_j \in B_{\phi_i}} {\psi}_j.
\end{multline*}
\end{enumerate}
\end{definition}

\begin{remark}
$Obj(Mol)$ and $\rightarrow_M $ define the category $\mathbf{Mol}$ of molecule complices.
\end{remark}

\begin{definition}[$Obj(Loop)$ and $\rightarrow_L$]
The objects $Obj(Loop)$ and the binary relation $\rightarrow_L$ on them are defined as follows:
\begin{enumerate}
\item $Obj(Loop)$ denotes the power set of $LOOP(M_0)$, i.e.,
\begin{equation*}
Obj(Loop):=Power(LOOP(M_0)).
\end{equation*}
Elements of $Obj(Loop)$ are called \textit{loop complices} (on $M_0$). 

Given $l =\{\psi_i\} \in Obj(Loop)$. Then, $l$ is expressed as a monomial, i.e.,
\begin{equation*}
l=\prod_{i}{\psi}_i  \in Obj(Loop),
\end{equation*}
using the multiplicative notation. The \textit{region $|l|$ of $l$} is defined by
\begin{equation*}
|l|:= \bigcup_{{\psi}_i \in l} |{\psi}_i|  \in REG(M_0).
\end{equation*}
``$Z$'' denotes the empty set of $Obj(Loop)$, i.e., 
\begin{equation*}
Z:= \emptyset  \in Obj(Loop).
\end{equation*}

\item Given $l_1=\prod_ i {\psi}_i,\  l_2=\prod_i {\phi}_i \in Obj(Loop)$. The binary relation $\rightarrow_L$  between $l_1$ and $l_2$ is defined by 
\begin{multline*} 
\phantom{aaaaaaaa}l_1 \rightarrow_L l_2 \text{ if and only if} \\
\text{ for } \forall {\phi}_i \in l_2,\  \exists B_{\phi_i}  \subset l_1 \text{ such that } 
|{\phi}_i| =\bigcup_{\psi_j \in B_{\phi_i}} |{\psi}_j|.
\end{multline*}
\end{enumerate}
\end{definition}

\begin{remark}
$Obj(Loop)$ and $\rightarrow_L $ define the category $\mathbf{Loop}$ of loop complices.
\end{remark}

\begin{definition}[$Obj(Reg)$ and $\rightarrow$]
The objects $Obj(Reg)$ and the binary relation $\rightarrow$ on them are defined as follows:
\begin{enumerate}
\item $Obj(Reg)$ denotes the power set of $REG(M_0)$, i.e.,
\begin{equation*} 
Obj(Reg):=Power(REG(M_0)).
\end{equation*}
Elements of $Obj(Reg)$ are called \textit{region complices} (on $M_0$). 

Given $s=\{r_i\} \in Obj(Reg)$. Then, $s$ is expressed as a monomial, i.e.,
\begin{equation*}
s=\prod_{i}r_i  \in Obj(Reg),
\end{equation*}
using the multiplicative notation. The \textit{region $|s|$ of $s$} is defined by
\begin{equation*}
|s|:= \bigcup_{r_i \in s} r_i  \in REG(M_0).
\end{equation*}
$|Z|$ denotes the empty set of $Obj(Reg)$ (Figure \ref{figure2} (b) right), i.e., 
\begin{equation*}
|Z|:= \emptyset  \in Obj(Reg).
\end{equation*}

\item Given $s_1={\prod}_ i r_{1i},\  s_2={\prod}_j r_{2j} \in Obj(Reg)$. The binary relation $\rightarrow$ between $s_1$ and $s_2$ is defined by 
\begin{equation*} 
\phantom{aaaaaaaa}s_1 \rightarrow s_2 \text{ if and only if } \bigcup_{r_{1i} \in s_1} r_{1i} \subset \bigcup_{r_{2j} \in s_2} r_{2j}.
\end{equation*}
\end{enumerate}
That is, it is an inclusion relationship.
\end{definition}

\begin{remark}
$Obj(Reg)$ and $\rightarrow$ define the category $\mathbf{Reg}$ of region complices.
\end{remark}

\begin{lemma}\label{categories} Some of the basic properties are summarized below.
\begin{enumerate}
\item $A,\ Z \in Obj(Mol) \subset Obj(Loop)$
\item $|A|,\ |Z| \in Obj(Reg)$.  
\item $l \rightarrow_L Z$ for $\forall l \in Obj(Loop)$.
\item $m \rightarrow_M Z$ for $\forall m \in Obj(Mol)$.
\item $|Z| \rightarrow s \rightarrow |A|$ for $\forall s \in Obj(Reg)$.
\end{enumerate}
\end{lemma}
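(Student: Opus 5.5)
The plan is to verify the five items one at a time, directly from the definitions of $A$, $Z$, $|Z|$ and of the three relations $\rightarrow_M$, $\rightarrow_L$, $\rightarrow$. No earlier lemma is needed beyond the definitions themselves.

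For (1), $A\subset MOL(M_0)$ by definition, so $A\in Power(MOL(M_0))=Obj(Mol)$. Also $Z=\emptyset$ is in every power set. The inclusion $Obj(Mol)\subset Obj(Loop)$ follows from $MOL(M_0)\subset LOOP(M_0)$ (Definition \ref{def_mol}) and monotonicity of the power set. For (2), each $|o|$ with $o\in A$ is a union of triangles of $M_0$, hence lies in $REG(M_0)$. Therefore $|A|\subset REG(M_0)$, i.e.\ $|A|\in Obj(Reg)$; and $|Z|=\emptyset\in Obj(Reg)$ trivially.

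Items (3) and (4) are vacuous truths. Both $l_1\rightarrow_L l_2$ and $m_1\rightarrow_M m_2$ are conditions quantified over all $\phi_i\in l_2$ (resp.\ $m_2$). When the target is $Z=\emptyset$ there is nothing to check, so $l\rightarrow_L Z$ and $m\rightarrow_M Z$ hold for every $l$ and $m$.

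For (5), $s_1\rightarrow s_2$ is the inclusion $|s_1|\subset|s_2|$. Since $|\,|Z|\,|$ is the empty union, which is $\emptyset\subset|s|$, we get $|Z|\rightarrow s$. For $s\rightarrow|A|$ we need $|s|\subset\bigcup_{o\in A}|o|$. By condition (3) in the definition of $A$, the hexagons cover $|M_0|=\mathbf{R}^2$. Every $r\in s$ is a union of triangles of $M_0$, hence contained in $\mathbf{R}^2$, and so $|s|\subset||A||$.

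The only point needing care is this last step. It requires that the hexagons of $A$ really tile all of $|M_0|$, which is exactly what condition (3) in the definition of $A$ asserts. It also requires reading $\bigcup o_i$ there as the union of the regions $|o_i|$. I would state this identification explicitly, and everything else is immediate.
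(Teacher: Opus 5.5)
Your proof is correct and takes the route the paper implicitly takes: the paper states this lemma without proof, as an immediate consequence of the definitions. Your item-by-item check (power-set membership, vacuous truth when the target is $Z$, and the tiling condition (3) on $A$ for $s \rightarrow |A|$) is exactly that verification, and your remark that $\bigcup_i o_i$ in condition (3) must be read as the union of the regions $|o_i|$ is a worthwhile clarification of the paper's notation.
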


\begin{example}
In Figure \ref{figure2} (a) middle, we have
\begin{align*}
A &\rightarrow_M x_1x_2x_3x_4 \rightarrow_M  x_3x_5 \rightarrow_M Z,\\
A &\rightarrow_M x_1x_2x_3x_4 \rightarrow_M  x_2x_6 \rightarrow_M Z,\\
A &\rightarrow_L x_1x_2x_3x_4 \  \rightarrow_L x_3x_5 \  \rightarrow_L x_7 \ \rightarrow_L Z, \\
A &\rightarrow_L x_1x_2x_3x_4 \  \rightarrow_L x_2x_6 \  \rightarrow_L x_7 \ \rightarrow_L Z
\end{align*}
for molecule complices $x_1x_2x_3x_4$, $x_3x_5$, $x_2x_6$, and a loop $x_7$.
Note that 
\begin{equation*}
|x_1x_2x_3x_4|= |x_3x_5|=|x_2x_6|=|x_7|.
\end{equation*}
\end{example}

\begin{example}
In Figure \ref{figure2} (b) middle, we have
\begin{align*}
|A| &\leftarrow |x_1||x_2||x_3||x_4| \leftrightarrows |x_3||x_5| \leftrightarrows |x_7| \leftarrow |Z|, \\
|A| &\leftarrow |x_1||x_2||x_3||x_4| \leftrightarrows |x_2||x_6| \leftrightarrows |x_7| \leftarrow |Z|
\end{align*} 
for region complices $|x_1||x_2||x_3||x_4|$, $|x_3||x_5|$, $|x_2||x_6|$, and $|x_7|$.
\end{example}

\subsubsection{The defining system of equations for integral molecules }\label{sec312}\ 

We use $h_i$ ($i \in \mathbf{N}$) as name tags for loops in $A$, and $x_i$ ($i \in \mathbf{N}$) as name tags for unknown loops on $M_0$. That is, $h_i$’s are constants and $x_i$’s are variables. The goal is to describe $x_i$'s in terms of $h_j$’s.
\begin{remark}
The length of $h_i$'s are $6$. The length of $x_i$'s are unknown.
\end{remark}
For example, in Figure \ref{figure2} (a) middle, we have two equations:
\begin{equation*}
\left\{
\begin{aligned}
|x_1x_2x_3x_4|&= |x_3x_5|,\\
 |x_1x_2x_3x_4|&= |x_2x_6|.
\end{aligned}
\right.
\end{equation*} 
Subtracting the common factors yields
\begin{equation*} 
\left\{
\begin{aligned}
|x_1x_2x_4|&= |x_5|,\\
 |x_1x_3x_4|&=|x_6|.
\end{aligned}
\right.
\end{equation*} 
These are the defining system of equations for $x_i$`s. Because $x_1, x_2, x_3$, and $x_4$ are not decomposable (i.e., there is no equation that decomposes $x_1, x_2, x_3$, and $x_4$ into smaller parts). $\exists h_i \in A$  ($i=a, b, c, d$) such that 
\begin{equation*} 
x_1=h_a,\ x_2=h_b,\ x_3=h_c, \text{ and } x_4=h_d.
\end{equation*} 
Now we have
\begin{equation*} \label{eq_int}
\left\{
\begin{aligned}
x_5 &= h_a+h_b+h_d, \\
x_6 &= h_a+h_c+h_d.
\end{aligned}
\right.
\end{equation*} 
Set 
\begin{equation*} 
h_a=h_2,\ h_b=h_3,\ h_c=h_4, \text{ and } h_d=h_5 
\end{equation*} 
(Figure \ref{figure2} (a) left). Then, $\exists \psi_1,\ \psi_2 \in LOOP(M_0)$ such that
\begin{equation*}
\left\{
\begin{aligned}
\psi_1 &= h_2+h_3+h_5, \\
\psi_2 &= h_2+h_4+h_5.
\end{aligned}
\right.
\end{equation*} 
That is, 
\begin{equation*} 
x_5=\psi_1 \text{ and } x_6=\psi_2.
\end{equation*} 
The geometric arrangement of the $h_i$’s are shown in Figure \ref{figure2} (a) left. The arrangement of $h_i$’s may not be unique, but the overall shape is unique modulo rigid motions, i.e., translations, rotations, and reflections. 

\begin{remark}
Hexagons must be placed at specific positions for them to interact. For example, three hexagons interact only when they are placed in a triangular position.
\end{remark}

First, we define ``intergral'' molecules as follows.

\begin{definition} [Integral molecules $INT(Mol)$]
Given $m \in Obj(Mol)$. $m$ is called an \textit{integral molecule} if 
\begin{equation*} 
A \rightarrow_M m. 
\end{equation*} 
$INT(Mol)$ denotes the collection of all integral molecules, i.e.,
\begin{equation*} 
INT(Mol):=\{m \in Obj(Mol) \ |\  A \rightarrow_M m\}.
\end{equation*} 
The collection INT(Loop) of all \textit{integral loops} and the collection INT(Reg) of all \textit{integral regions} are defined by
\begin{align*}
INT(Loop):&=\{ l \in Obj(Loop)\  |\  A \rightarrow_L l\},\\
INT(Reg):&=\{ |m| \in Obj(Reg)\ |\  m \in INT(Mol) \}.
\end{align*}
\end{definition}

\begin{example}
All objects in Figure \ref{figure2} except $x_7$ are integral molecules.
\end{example}

\begin{definition} [$Supp(m)$]
Given $m \in INT(Mol)$. By definition, $\exists \{h_1,\ h_2,\  \ldots,\ h_k\} \subset A$ uniquely such that
\begin{equation*}
m=h_1+ h_2+  \ldots+ h_k.
\end{equation*}
$Supp(m)$ is then defined by
\begin{equation*}
Supp(m):=\{h_1,\ h_2,\  \ldots,\ h_k\} \quad \subset A
\end{equation*}
and called the \textit{support} of $m$.
\end{definition}

\begin{definition} [$INT(U)$]
Given $U \subset A$. $INT(U)$ denotes the collection of all integral molecules whose supports are contained in $U$, i.e.,
\begin{equation*}
INT(U):=\{\phi \in INT(Mol) \cap MOL(M_0)\ |\ Supp(\phi) \subset U \}.
\end{equation*}
\end{definition}

\begin{remark}
Given $m \in INT(Mol)$. Then, $m \in INT(Supp(m))$.
\end{remark}

\begin{figure}
\centering
\captionsetup{width=1.0\linewidth}
\includegraphics{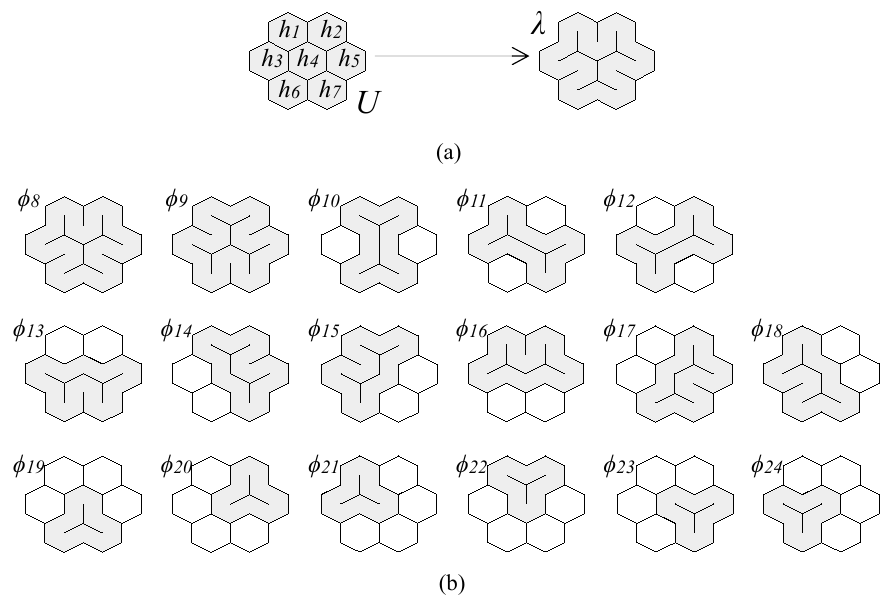}
\caption{Defining system of equations for an integral molecule: (a) $U$ and $\lambda$. (b) $C_U$.}
\label{figure2a}
\end{figure}

\begin{remark} Given $k,l \in \mathbf{Z}$. $[k,l]$ denotes all integers from $k$ to $l$, as well as $k$ and $l$ themselves, i.e.,
\begin{equation*}
[k, l]:=\{k,\ k+1,\ k+2,\ \ldots,\ l-1,\ l \} \quad \subset \mathbf{Z}.
\end{equation*}
\end{remark}

\begin{definition} [Defining system of equations for an integral molecule]\label{def_eq_for_int}
Given $\lambda \in INT(Mol) \cap MOL(M_0)$. Suppose
\begin{align*}
Supp(\lambda)&=\{h_1,\ h_2,\ \ldots, \ h_k\}, \\
INT(Supp(\lambda))&=\{\phi_{k+1},\ \phi_{k+2},\ \ldots,\ \phi_{k+n}\} \quad (\phi_{k+1}=\lambda),\\
Supp(\phi_{j})&=\{h_{j(1)},\ h_{j(2)},\ \ldots, \ h_{j(k_j)}\} \quad \subset Supp(\lambda)
\end{align*}
($j \in [k+1, k+n]$). Then,
\begin{align*}
\phi_{k+1}&=\lambda=h_1+ h_2+  \cdots+ h_k, \\ 
\phi_j&= h_{j(1)}+ h_{j(2)}+ \cdots + h_{j(k_j)}  \quad (j \in [k+2, k+n]).
\end{align*}
By replacing $h_i$ and $\phi_j$ with indeterminates $x_i$ and $y_j$, respectively, we obtain a system of equations:
\begin{equation}\label{eq_defeq}
y_j= x_{j(1)}+ x_{j(2)}+ \cdots + x_{j(k_j)}  \quad (j \in [k+1, k+n]).
\end{equation}
Equation (\ref{eq_defeq}) is called the \textit{defining system of equations} for $\lambda$. $\lambda$ is then obtained by
\begin{equation*}
\lambda=x_1+ x_2+  \ldots+ x_k.
\end{equation*}
\end{definition}

\begin{remark}
Definition \ref{def_eq_for_int} could be formulated in the ``functor category'' \cite{CAT1} (see the Yoneda lemma).
\end{remark}

\begin{example}
In Figure \ref{figure2a},
\begin{align*}
Supp(\lambda)&=\{h_1,\ h_2,\ \ldots, \ h_7\}, \\
INT(Supp(\lambda))&=\{\phi_{8},\ \phi_{9},\ \ldots,\ \phi_{24}\}.
\end{align*}
where
\begin{equation*}
\left\{
\begin{aligned}
\lambda&=\phi_8=h_1+ h_2+ h_3+h_4+h_5+h_6+ h_7, \\
\phi_9&=h_1+ h_2+ h_3+h_4+h_5+h_6+ h_7, \\
\phi_{10}&=h_1+ h_2+h_4+h_6+ h_7, \\
\phi_{11}&=h_1+ h_3+h_4+h_5+ h_7, \\
&\cdots\\
\phi_{23}&= h_4+h_5+h_7,\\
\phi_{24}&= h_3+h_4+h_6.
\end{aligned}
\right.
\end{equation*}
By replacing $h_i$ and $\phi_j$ with indeterminates $x_i$ and $y_j$, respectively, we obtain the 
defining system of equations for $\lambda$.
\end{example}

\begin{remark}
Every triplet of hexagons in a triangular position contributes to Equation (\ref{eq_defeq}). That is, if
$y_j=x_1+x_2+x_3$ is included in Equation (\ref{eq_defeq}), then $x_1$, $x_2$, and $x_3$ are in a triangular position.
\end{remark}

As of now, I have no proof of the following claim.
\begin{claim} \label{claim_eq)}
Equation (\ref{eq_defeq}) uniquely determines the shape of $\lambda$ modulo rigid motions, i.e., translations, rotations, and reflections.
\end{claim}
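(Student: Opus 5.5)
The plan is to reconstruct the adjacency graph of the hexagons in $Supp(\lambda)$ from Equation (\ref{eq_defeq}) alone, and then to show that this graph fixes the arrangement of the hexagons in $|A|$ up to a symmetry of the hexagonal tiling.

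\emph{Setting up the tiling.} The hexagons of $|A|$ tile $\mathbf{R}^2$, and each hexagon has six neighbours. Call three hexagons a \emph{triangle} if they are pairwise adjacent. By the remark preceding the claim, every triangle $\{h_a,h_b,h_c\}\subset Supp(\lambda)$ gives an equation $y_j=x_a+x_b+x_c$. I would first establish the converse: an equation with $k_j=3$ can only come from three mutually adjacent hexagons. The reason is that a fused loop of three hexagons must be connected, with no singular hole and regular triangles only, and this forces the triangular position. So the three-term equations recover exactly the set $T$ of triangles of hexagons in $Supp(\lambda)$.

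\emph{Recovering adjacency.} Next I would recover the adjacency relation between hexagons. Two hexagons $h_a,h_b$ are adjacent if they lie in a common triangle. If some adjacent pair lies in no triangle inside $Supp(\lambda)$, I would detect it from the longer equations. For this I need a lemma: two hexagons are adjacent if and only if there is an equation $y_j=\sum$ containing both $x_a$ and $x_b$ such that removing either of them leaves a term set that is not an equation. I expect this step, characterizing adjacency purely combinatorially, to be the main obstacle. It needs a local analysis of when a union of hexagons bounds a single loop of regular triangles, which amounts to saying that the boundary walk of $|\phi_j|$ uses normal edges consistently. I would first try to prove that a set of hexagons is integral exactly when its dual subgraph in the triangular lattice $\Lambda$ (the graph of hexagon centres) is connected and the set is ``simply connected with no bad holes''. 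Equation membership then encodes the induced subgraph $G$ of $\Lambda$ on $Supp(\lambda)$.

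\emph{Rigidity.} Finally I would show that a connected induced subgraph $G$ of the triangular lattice $\Lambda$, together with its triangles $T$, embeds into $\Lambda$ uniquely up to lattice automorphisms. The argument propagates placements. Fix one triangle of $T$; this costs one translation, one rotation and one reflection. An adjacent triangle sharing an edge is then forced. If an adjacent pair lies in no triangle, I would use the longer equations: the absence of the two common neighbours from the relevant equations pins down the direction, up to the ambiguity already fixed. The hypothesis $\lambda\in MOL(M_0)$, which makes $|\lambda|$ hole-free, should rule out flexible ``hinges''. The worrying configurations are a single edge joining two blocks, which could rotate. To exclude them I would show that such hinge configurations still leave distinguishing equations, namely the non-integrality of certain unions, that fix the angle.

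Composing these steps, every solution of Equation (\ref{eq_defeq}) by hexagons of $A$ is an image of $Supp(\lambda)$ under a rigid motion preserving $|A|$. Hence $\lambda=x_1+\cdots+x_k$ is determined up to translations, rotations and reflections.
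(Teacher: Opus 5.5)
The paper gives no proof of this claim; the author says explicitly that none is known. So your argument has to stand on its own, and it does not. Your Step 1 is fine, but the rigidity step is false, and the flexibility it misses cannot be seen by the equations.

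Write hexagon centres as complex numbers $p+q\omega$ ($p,q\in\mathbf{Z}$, $\omega=e^{i\pi/3}$), so adjacency means distance $1$.

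\emph{Step 2 is unnecessary, and its ingredients are false.} The hinge you fear, a single adjacency joining two blocks, never occurs in an integral support. Suppose $h,h'\in Supp(\phi)$ are adjacent and neither of their two common neighbours lies in $Supp(\phi)$. Then their shared edge is a cross-cut of the hole-free region $|\phi|$, with loop triangles on both sides, and a closed trajectory cannot cross between the sides through a single edge. Hence every adjacency of an integral support lies in one of its triangles, and the three-term equations already give you $G$. Both ingredients of Step 2 fail:
\begin{enumerate}
\item Two adjacent hexagons form a connected hole-free set that does not fuse, since their shared edge is a bridge of the dual graph of triangles. So your characterization of integrality is wrong.
\item In the bowtie $\{0,1,\omega,-1,-\omega\}$, the non-adjacent centres $1$ and $-1$ occur together only in the full equation, and deleting either one leaves a non-integral set. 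So your adjacency lemma is wrong.
\end{enumerate}

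\emph{Where the real hinges are.} They are hexagons at which two triangles of $G$ meet with no adjacency between their other vertices. Such triangles must sit opposite each other around the common hexagon, so their vertex sets are forced. However, which free vertex of a triangle carries the rest of the configuration is a binary choice that neither $G$ nor the equations record.

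\emph{A concrete counterexample.} Let $\Sigma_1$ be the union of the triangles $\{2,2-\omega,1\}$, $\{1,\omega,0\}$, $\{0,-1,-\omega\}$, $\{-1,-2,-2+\omega\}$. Let $\Sigma_2$ be obtained by attaching the last triangle at $-\omega$ instead, as $\{-\omega,-2\omega,1-2\omega\}$.
\begin{enumerate}
\item Each is a set of nine hexagons inducing the same abstract graph: four triangles glued in a path at single vertices, with no other adjacencies.
\item Each union is hole-free.
\item Each fuses into a loop of length $54$. At each of the three joints the four used sides form two pairs of consecutive sides, so the loop covers that hexagon by two arcs of three triangles. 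Every other hexagon is entered and left through two consecutive sides.
\item By the cross-cut argument, the integral sub-supports are, in both cases, exactly the singletons and the unions of consecutive triangles of the chain. So the two systems (\ref{eq_defeq}) coincide under the obvious labelling.
\end{enumerate}
Yet the shapes differ (units: centre spacing). In $\Sigma_1$ the joints $1,0,-1$ are collinear, and the region has diameter at least $5$ because it contains the hexagons centred at $\pm2$. In $\Sigma_2$ the joints $1,0,-\omega$ make an angle of $120^\circ$, and all centres are within $2\sqrt3$ of each other. So the region has diameter at most $2\sqrt3+2/\sqrt3<5$.

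So there are no distinguishing equations to find, and Step 3 cannot be completed. Under the natural reading of fusion (a loop whose extended region is the union of the hexagons), the claim itself fails.

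\emph{A further issue with your conclusion.} Your last sentence speaks of arbitrary solutions of (\ref{eq_defeq}). Those equations only assert that the listed sets fuse, so under that reading you may not use the absence of an equation at all. Already the bowtie and the fan $\{0,1,\omega,\omega^2,-1\}$ (send $-\omega\mapsto\omega^2$) satisfy the same equations with different shapes.
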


\begin{figure}
\centering
\captionsetup{width=1.0\linewidth}
\includegraphics{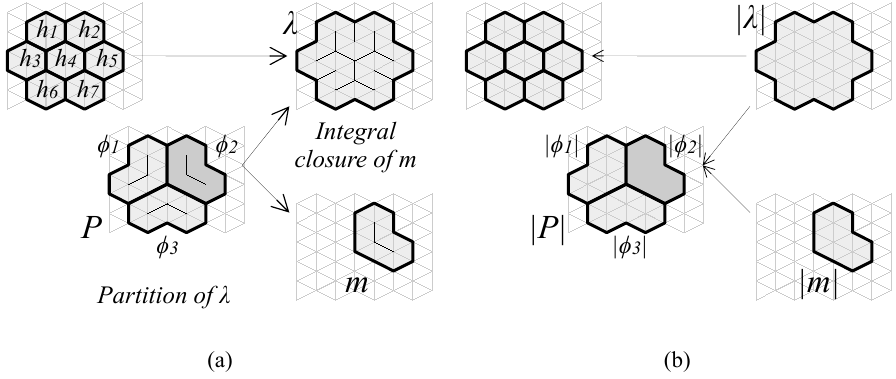}
\caption{(a) Definition of rational molecules. (b) Definition of rational regions.}
\label{figure3}
\end{figure}

\subsubsection{The defining system of equations for rational molecules }\label{sec313}\ 

Next, let's consider the defining system of equations for molecules obtained by ``fission'' of an integral molecule. We define a collection of ''rational'' molecules as follows.

\begin{definition} [$Part(m)$]
Given $P, m \in Obj(Mol)$. $P$ is called a \textit{partition} of $m$ if
\begin{enumerate}
\item $P \rightarrow_M m$, and
\item $|P|=|m|$. 
\end{enumerate} 
Given $m \in Obj(Mol)$. $Part(m)$ denotes the collection of all partitions of $m$, i.e,
\begin{equation*} 
Part(m):=\{P \in Obj(Mol)\ |\  P \text{ is a partition of } m\} \quad \subset Obj(Mol).
\end{equation*}
\end{definition}

\begin{remark}
A mapping $F$ from $REG(M_0)$ to $Power(Obj(Mol))$ is defined by
\begin{equation*} 
F(|m|):=Part(m) \quad \subset Obj(Mol),
\end{equation*}
where $m \in Mol(M_0)$.
\end{remark}

\begin{remark}
$Part(m)$ defines a ``topology'' on $Obj(Mol)$, i.e.,
\begin{equation*} 
|m'| \subset |m|  \text{ if } \exists P \in Part(m) \text{ such that } P \rightarrow_M m'
\end{equation*}
for $m' \in Obj(Mol)$. The opposite is not true.
\end{remark}

\begin{example}
In Figure \ref{figure3} (a), $P \in Part(\lambda)$. The vertical flip of $P$ gives another partition of $\lambda$.
\end{example}

\begin{definition} [$Part(\lambda, m)$]
Given $\lambda, m, \text{and }P \in Obj(Mol)$. $P$ is called a \textit{partition} of $\lambda$ consistent with $m$ if
\begin{enumerate}
\item $P \in Part(\lambda)$, and
\item $P \rightarrow_M m$.
\end{enumerate} 
Given $\lambda, m \in Obj(Mol)$. $Part(\lambda,m)$ denotes the collection of all partitions of $\lambda$ consistent with $m$, i.e,
\begin{equation*} 
Part(\lambda, m):=\{P \in Part(\lambda)\ |\  P \rightarrow_M m \} \quad  \subset Obj(Mol).
\end{equation*}
Note that 
\begin{equation*} 
|m| \subset |\lambda| \quad \text{if } Part(\lambda, m)\neq \emptyset
\end{equation*}
\end{definition}

\begin{definition} [Rational molecules $RAT(Mol)$]
Given $m \in Obj(Mol)$. $m$ is called a \textit{rational molecule} if 
\begin{equation*}
\exists \lambda \in INT(Mol) \cap MOL(M_0) \text{ s.t. } Part(\lambda,m) \neq \emptyset
\end{equation*} 
(Figure \ref{figure3} (a)). $\lambda$ is called an \textit{integral closure} of $m$. $Cl(m)$ denotes the collection of all integral closure of $m$, i.e,
\begin{equation*} 
Cl(m):=\{\lambda \in INT(Mol) \cap MOL(M_0)\ |\ Part(\lambda,m) \neq \emptyset \}.
\end{equation*} 
$RAT(Mol)$ denotes the collection of all rational molecules, i.e., 
\begin{equation*} 
RAT(Mol):=\{m \in Obj(Mol) \ |\ Cl(m) \neq \emptyset\}.
\end{equation*}
The collection $RAT(Loop)$ of rational loops and the collection $RAT(Reg)$ of rational regions are also defined in the same way (Figure \ref{figure3} (b)).
\end{definition}

\begin{remark}[Computation of $Cl(m)$, $Part(\lambda)$, and $Part(\lambda,m)$]
We can compute elements of $Cl(m)$, $Part(\lambda)$, and $Part(\lambda,m)$ quickly by ``cube stacking'' (see Subsection \ref{sec325}).
\end{remark}

\begin{lemma}\label{INT_RAT_Obj}
$INT(Mol) \subset RAT(Mol) \subset Obj(Mol)$.
\end{lemma}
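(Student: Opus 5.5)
The second inclusion $RAT(Mol) \subset Obj(Mol)$ is immediate, since $RAT(Mol)$ is defined as a subcollection $\{m \in Obj(Mol)\ |\ Cl(m)\neq\emptyset\}$. So the real content is $INT(Mol) \subset RAT(Mol)$. Given $m \in INT(Mol)$, we must exhibit some $\lambda \in INT(Mol)\cap MOL(M_0)$ and a partition $P \in Part(\lambda)$ with $P \rightarrow_M m$.

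The natural choice is to take the partition to be the hexagons themselves. Write $m=\prod_i \phi_i$. Since $A \rightarrow_M m$, each $\phi_i$ is a fusion $\sum_{h \in B_{\phi_i}} h$ of hexagons in $A$. Set $P := \bigcup_i B_{\phi_i} \subset A$, viewed as a molecule complex; each hexagon is a molecule, so $P \in Obj(Mol)$. By construction $P \rightarrow_M m$, using the same sets $B_{\phi_i}$, and $|P| = |m|$ because $|\phi_i| = \bigcup_{h\in B_{\phi_i}}|h|$.

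The difficulty is that the definition of a rational molecule requires the integral closure $\lambda$ to lie in $MOL(M_0)$, i.e.\ to be a single molecule, whereas $m$ may be a complex of several molecules. I would handle this in two cases.

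\emph{Case 1: $m$ is a single molecule} (the case the paper's notation seems to intend, as in the phrase ``integral molecule''). Take $\lambda=m$. Then $\lambda \in INT(Mol)\cap MOL(M_0)$, and $P=Supp(m)$ satisfies $P \rightarrow_M \lambda$ and $|P|=|\lambda|$, so $P\in Part(\lambda)$. Moreover $P\rightarrow_M m$ trivially, so $P\in Part(\lambda,m)$ and $\lambda \in Cl(m)$.

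\emph{Case 2: $m$ is a general complex.} Here I would try to enlarge $|m|$ to a single integral molecule $\lambda$ whose support contains every $B_{\phi_i}$. Then $P' := Supp(\lambda)$, viewed as a complex of hexagons, satisfies $P' \in Part(\lambda)$, and $P' \rightarrow_M m$ holds because each $B_{\phi_i}\subset P'$. Constructing $\lambda$ means fusing the hexagons of a finite connected patch of $A$ covering $|m|$ into one loop with no singular holes, for example a large hexagonal patch of hexagons. Verifying that such a patch fuses to a single regular loop is the main obstacle. I would first attempt an inductive fusion, adding one hexagon at a time in triangular position, which the remark on Equation (\ref{eq_defeq}) indicates is the elementary fusion move. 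If that proves unwieldy, I would fall back to reading the lemma as concerning single molecules and rely on Case 1.

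The degenerate case $m=Z$ also needs a check. $Part(\lambda,Z)$ is nonempty for any hexagon $\lambda$, because $m\rightarrow_M Z$ holds for every $m$ (Lemma \ref{categories}).
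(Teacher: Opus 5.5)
Your Case 1 is exactly the argument the paper has in mind when it calls the lemma immediate: take $\lambda=m$ with the trivial partition. Your $P=Supp(m)$ works, and so does $P=\{m\}$ with $B_m=\{m\}$, which makes $|P|=|\lambda|$ tautological. Your check of $Z$ is also fine. The problem is Case 2, and it is not merely a hard verification. In the literal reading, where $INT(Mol)$ contains arbitrary complexes, the inclusion is false, so no enlargement argument can close it. Indeed $A\rightarrow_M A$ (take $B_h=\{h\}$ for each $h\in A$), so $A\in INT(Mol)$. Suppose some $\lambda\in INT(Mol)\cap MOL(M_0)$ had $Part(\lambda,A)\neq\emptyset$. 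By the paper's own remark after the definition of $Part(\lambda,m)$, this gives $\mathbf{R}^2=|A|\subset|\lambda|$, contradicting the finiteness of $len(\lambda)$ required of a molecule. Hence $A\notin RAT(Mol)$. Your plan of covering $|m|$ by a finite patch of hexagons tacitly assumes $|m|$ is bounded, and this is exactly where it breaks.

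Even for finite complexes, the enlargement would need a genuinely new combinatorial result: that suitable arbitrarily large patches of hexagons fuse into a single integral molecule with no singular holes. Nothing in the paper provides this. The paper also stresses that hexagons interact only in special positions: three hexagons only in triangular position, and in Figure \ref{figure2} the loop $x_7$, whose extended region is a union of four hexagons, is explicitly not an integral molecule. So this step cannot be treated as routine. The fallback you mention is therefore forced, not optional. The lemma is true, and immediate, only when integral molecules are single molecules, i.e.\ for $INT(Mol)\cap MOL(M_0)$ together with $Z$. This is also how the paper implicitly uses $INT(Mol)$: its definition of $Supp(m)$ writes $m=h_1+\cdots+h_k$. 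You should state that restriction explicitly and give Case 1 as the proof, rather than presenting Case 2 as something that might still be completed.
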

\begin{proof}
It follows immedeately from the definitions.
\end{proof}

As of now, I have no proof of the following claim.

\begin{claim}\label{RAT_Obj}
$RAT(Mol) = Obj(Mol)$.
\end{claim}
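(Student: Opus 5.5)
Throughout I read elements of $Obj(Mol)$ as monomials $m=\prod_i \psi_i$, i.e. finite products of molecules, as the multiplicative notation suggests. Under this reading each $|m|$ is a finite region, because each $len(\psi_i)$ is finite (Definition \ref{def_mol}). By Lemma \ref{INT_RAT_Obj} only the inclusion $Obj(Mol)\subset RAT(Mol)$ has to be shown. So for an arbitrary $m\in Obj(Mol)$ I will exhibit some $\lambda\in INT(Mol)\cap MOL(M_0)$ and some $P\in Part(\lambda,m)$.

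The degenerate case $m=Z$ is immediate. Take $\lambda=h$ for any $h\in A$ and $P=\{h\}$. Then $P\rightarrow_M \lambda$ and $|P|=|\lambda|$ hold trivially, and $P\rightarrow_M Z$ holds by Lemma \ref{categories}. For $m\neq Z$ the plan has three steps.

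\emph{Step 1 (integral closure).} Since $|m|$ is bounded, I choose a large ``integral hexagon'' $\lambda$. Its support is the set of all $h_i\in A$ whose hexagons meet a large ball containing $|m|$, chosen so that $|\lambda|$ is a simply connected union of hexagons of $|A|$ with $|m|\subset|\lambda|$. Fusing these hexagons layer by layer from a central one keeps each intermediate a loop without holes. This gives $A\rightarrow_M\lambda$ and $\lambda\in MOL(M_0)$.

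\emph{Step 2 (a common refinement).} Let $P$ be the complex of all length-$6$ loops of $M_0$ whose hexagons lie in $|\lambda|$; these hexagons need not belong to $A$. Since every length-$6$ loop is a molecule, $P\in Obj(Mol)$, and clearly $|P|=|\lambda|$. It remains to verify two facts. First, $P\rightarrow_M \lambda$, which follows from Step 1 by taking $B_\lambda=Supp(\lambda)\subset P$. Second, $P\rightarrow_M m$: for each $\psi\in m$ there must be some $B_\psi\subset P$ with $\psi=\sum_{\phi\in B_\psi}\phi$.

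\emph{Step 3 (the main obstacle).} The heart of the proof is a lemma: \emph{every molecule $\psi$ is a fusion of length-$6$ loops}. I would prove this by induction on $len(\psi)$. The base case $len(\psi)=6$ is trivial. For the inductive step I would look for an ``ear'' of $\psi$: a hexagon $H\subset|\psi|$ adjacent to the boundary of $|\psi|$, whose removal leaves a loop $\psi'$ that is still locally regular and satisfies $\psi=\psi'+H$.

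The difficulty is that removing $H$ may create a singular hole or break $\psi'$ into non-regular triangles. To handle this I would first use local regularity: each hole of $\psi$ is loop decomposable (Definition \ref{def_decomp}), so by the induction hypothesis the holes are themselves fusions of hexagons. I would then choose $H$ at a boundary vertex of $|\psi|$ with extremal coordinate, where the equilateral-mesh geometry leaves only finitely many local configurations to check. If some configuration admits no ear, I would instead split off a larger already-known molecule. These are the candidate counterexample shapes, and the case analysis there is where I expect the proof to be hardest.
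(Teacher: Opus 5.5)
Your Steps 1--2 are a reasonable frame, but the proposal is not yet a proof. The paper gives no argument to compare with: it states this claim explicitly as unproved. Everything therefore rests on the lemma of Step 3, which you only sketch, and the sketch has three problems.
\begin{itemize}
\item The ear induction is left open at exactly the critical point (``if some configuration admits no ear \dots'').
\item The induction hypothesis cannot be applied to the holes of $\psi$. They are regions covered by loops, not molecules of smaller extended length.
\item An ear $H$ with $\psi=\psi'+H$ and $\psi'$ again a molecule is an iterated binary-fusion statement. It is much stronger than what $P\rightarrow_M m$ requires, and you give no reason it holds.
\end{itemize}
Step 1 is likewise only asserted. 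Fusion is delicate: two neighbouring hexagons of $A$ share a single edge and cannot be fused into one loop (cf.\ the remark on triangular positions). So you must actually exhibit one loop $\lambda$ with no holes and $|\lambda|=\bigcup_{h\in Supp(\lambda)}|h|$. One way is to enclose the chosen centres together with a tree-like system of pairwise non-adjacent junction vertices, each shared by three chosen hexagons.

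The missing idea is that you must first fix what $\phi=\sum_{\psi\in B}\psi$ means. The paper never defines fusion, and the choice decides the claim.

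\emph{Union reading.} Suppose it means only that $\phi$ is a molecule with $|\phi|=\bigcup_{\psi\in B}|\psi|$, as in $|\psi_d|=|\psi_a|\cup|\psi_b|$ and in the definition of interaction. Then no induction is needed:
\begin{itemize}
\item View $\psi$ as a simple cycle $J$ in the dual honeycomb graph. Every triangle of $\psi$ has a vertex $v$ inside $J$, because the two edges through which $J$ passes separate their shared vertex from the other two.
\item All six triangles around $v$ lie in $|\psi|$. Those not on $\psi$ are disjoint from $J$, so they lie inside $J$, i.e.\ in holes.
\item A triangle of a hole lies on some loop of that hole's decomposition, and the same argument applies to that loop. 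Its extended region stays inside $|\psi|$ because $|\psi|$ has no holes.
\end{itemize}
Hence $|\psi|$ is the union of the $6$-triangle hexagons it contains. All of these belong to your $P$, and you are done.

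\emph{Disjoint reading.} If the parts must be disjoint, as the word ``partition'' suggests, the claim is false. Take adjacent vertices $w_1,w_2$ of $M_0$ and let $m$ consist of the two length-$6$ loops around them. Both overlapping loops are forced into $P$, since the only molecule whose region fits inside a $6$-triangle hexagon is that hexagon's own loop.

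\emph{Infinite complexes.} Restricting to finite complexes is not a harmless reading but a necessary amendment. $Obj(Mol)$ is the full power set and contains $A$ (Lemma \ref{categories}). Yet $Part(\lambda,A)\neq\emptyset$ would force every hexagon of $A$, hence all of $\mathbf{R}^2$, into $|\lambda|$, while $len(\lambda)$ is finite. So $A\notin RAT(Mol)$ and the claim as literally stated fails. You should say explicitly that you prove the amended version.
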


\begin{figure}
\centering
\captionsetup{width=1.0\linewidth}
\includegraphics{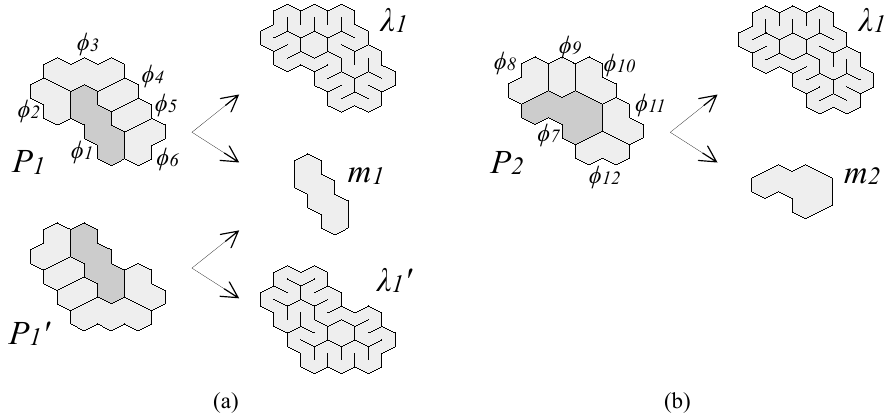}
\caption{Examples of monochromatic spectra.}
\label{figure3a}
\end{figure}

\begin{definition} [Minimal integral closures]
Given $m \in RAT(Mol)$ and $\lambda \in Cl(m)$. $\lambda$ is called \textit{minimal} if
\begin{equation*}
|\lambda’| \subset |\lambda| \text{ implies } \lambda’=\lambda
\end{equation*}
for $\forall \lambda’ \in Cl(m)$.
\end{definition}

\begin{definition} [$Spec(m)$]
Given $m \in RAT(Mol)$. $Spec(m)$ denotes the collection of all minimal integral closures of $m$, i.e.,
\begin{equation*}
Spec(m):=\{\lambda \in Cl(m) \ |\ \lambda \text{ is minimal} \} \quad \subset INT(Mol) \cap MOL(M_0)
\end{equation*}
Elements of $Spec(m)$ are called the \textit{spectra} of $m$. 
\end{definition}

\begin{example} 
In Figure \ref{figure3} (a), 
\begin{enumerate}
\item $\lambda = h_1+h_2+ \cdots +h_7 \in Spec(m)$,
\item $P=\phi_1\phi_2\phi_3 \not\in Cl(m)$ because $P$ is neither integral nor a loop.
\end{enumerate}
\end{example} 

\begin{lemma}
Given $m \in RAT(Mol)$ such that $Spec(m)=\{\lambda_1,\ \lambda_2,\ \ldots,\ \lambda_k \}$ ($k \in \mathbf{N}$). Then,
\begin{equation*}
\text{for }\forall \kappa \in Cl(m),\  \exists \lambda_i \in Spec(m) \text{ and } b \in INT(Mol) \text{ such that }  \kappa = \lambda_i+b.
\end{equation*}
\end{lemma}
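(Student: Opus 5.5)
The plan is to argue by well-foundedness of the inclusion order on extended regions, and then to build the term $b$ from supports. Fix $\kappa \in Cl(m)$. Since $\kappa \in INT(Mol) \cap MOL(M_0)$, it has a finite support, so $len(\kappa)$ is finite. Consider the set
\begin{equation*}
\{\lambda' \in Cl(m) \ |\ |\lambda'| \subset |\kappa|\}.
\end{equation*}
It contains $\kappa$, so it is nonempty. Every element is integral, so its region is a union of hexagons of $|A|$ contained in $|\kappa|$. Hence this set is finite, because there are only finitely many subsets of $Supp(\kappa)$. Choose an element $\lambda_i$ whose region is minimal under inclusion. If some $\lambda'' \in Cl(m)$ satisfies $|\lambda''| \subset |\lambda_i|$, then $|\lambda''| \subset |\kappa|$, so $\lambda''$ lies in the finite set. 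Minimality then gives $|\lambda''| = |\lambda_i|$. Since an integral molecule is determined by its support, this forces $\lambda'' = \lambda_i$. So $\lambda_i$ is minimal in the sense of the definition, that is, $\lambda_i \in Spec(m)$ with $|\lambda_i| \subset |\kappa|$.

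Next I will produce $b$. Because $|\lambda_i| \subset |\kappa|$ and both regions are unions of non-overlapping hexagons of $A$, we get $Supp(\lambda_i) \subset Supp(\kappa)$. I will set
\begin{equation*}
b := \sum_{h \in Supp(\kappa)\setminus Supp(\lambda_i)} h .
\end{equation*}
The fusion $+$ of loops corresponds to taking the union of supports. The uniqueness of supports stated in the definition of $Supp$ then gives $\lambda_i + b = \sum_{h \in Supp(\kappa)} h = \kappa$. By construction $b$ is a fusion of hexagons of $A$, so it is obtained from $A$ via $\rightarrow_M$. This makes $b$ an element of $INT(Mol)$, provided the fusion $b$ actually exists as a molecule complex, i.e.\ $A \rightarrow_M b$.

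The main obstacle is that last point. The complement $Supp(\kappa)\setminus Supp(\lambda_i)$ need not fuse to a single loop: it may be disconnected, or leave singular holes. My first attempt is to read $b$ as a molecule complex, a monomial whose factors are the fusions of the connected components of the complement. By the definition of $\rightarrow_M$, such a complex is integral as soon as each component is a fusion of hexagons. The sum $\lambda_i + b$ is then interpreted as the fusion of $\lambda_i$ with each factor, which again equals $\kappa$ by the support computation.

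If the components themselves fail to be loops, I would instead take a finer decomposition: the individual hexagons $h \in Supp(\kappa)\setminus Supp(\lambda_i)$. Each is a loop of length $6$ and so a molecule in $A$. With this choice $b$ is the complex $\prod h$, and $A \rightarrow_M b$ holds trivially. The statement then holds with $b$ understood as an integral molecule complex.
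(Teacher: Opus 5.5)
Your proposal is correct and is the same argument the paper has in mind when it says the lemma ``follows immediately from the definitions'': you pick an inclusion-minimal element of the finite set $\{\lambda'\in Cl(m)\mid |\lambda'|\subset|\kappa|\}$ to obtain $\lambda_i\in Spec(m)$ with $Supp(\lambda_i)\subset Supp(\kappa)$ (this relies on an integral molecule being determined by its support, hence by its region), and you let $b$ absorb the leftover hexagons. Your final choice $b=\prod_{h\in Supp(\kappa)\setminus Supp(\lambda_i)} h$ (equal to $Z$ when $\kappa=\lambda_i$, where $A\rightarrow_M Z$ holds vacuously) is the right reading, since the statement only asks for $b\in INT(Mol)$, not $b\in INT(Mol)\cap MOL(M_0)$; your intermediate ``connected components'' version is unnecessary and can fail, so you can drop it.
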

\begin{proof}
It follows immediately from the definitions.
\end{proof}

We identify the spectra that can be transformed into each other by ``rigid motions.''

\begin{definition} [$RM(Obj(Mol))$]\label{def_RM}
A \textit{rigid motion} is a geometric operation on a solid object that changes its position and orientation in space while maintaining its shape, size, but not its chirality ("handedness"). Rigid motions include translations and rotations, and reflections. $RM(Obj(Mol))$ denotes the collection of all rigid motions of elements of $Obje(Mol)$, i.e.,
\begin{multline*}
RM(Obj(Mol)):=\{ \tau \in MAP(Obj(Mol)) \ |\  \tau \text{ is a rigid motion} \}\\
 \subset MAP(Obj(Mol)),
\end{multline*}
where $MAP(Obj(Mol))$ denotes the collection of all mappings from $Obj(Mol)$ to $Obj(Mol)$. 
Given $m \in Obj(Mol)$ and $\tau \in RM(Obj(Mol))$. $\tau(m)$ denotes the molecule obtained by transforming $m$ by $\tau$. 
\end{definition}

\begin{definition} [Monochromatic spectrum]
Given $m \in RAT(Mol)$ such that $Spec(m)=\{\lambda_1,\ \lambda_2,\ \ldots,\ \lambda_k \}$ ($k \in \mathbf{N}$). $Spec(m)$ is called \textit{monochromatic} if elements of $Spec(m)$ can be transformed into each other by rigid motions, i.e.,
\begin{equation*}
\text{for }\forall \lambda_i, \lambda_j \in Spec(m),\  \exists \tau \in RM(Obj(Mol)) \text{ such that }  \lambda_j=\tau(\lambda_i).
\end{equation*}
We write 
\begin{equation*}
Spec(m)=\{\lambda_i\} \bmod \ RM(Obj(Mol))
\end{equation*}
if $Spec(m)$ is monochromatic ($\lambda_i$ is any element of $Spec(m)$).
\end{definition}

\begin{example}
In Figure \ref{figure3a} (a), 
\begin{equation*}
Spec(m_1)=\{ \lambda_1, {\lambda_1}' \}.
\end{equation*}
They are transformed into each other by ``$180$ degree rotation.'' Therefore, $Spec(m_1)$ is monochromatic, i.e., 
\begin{equation*}
Spec(m_1)=\{ \lambda_1 \} \bmod \ RM(Obj(Mol)),
\end{equation*}
\end{example}

\begin{remark}
In Figure \ref{figure3a}, $Spec(m_2)=\{\lambda_1\}$. That is, different rational molecules may have the same spectrum.
\end{remark}

\begin{remark} 
Given $m, c \in Obj(Mol)$. $c$ is called the \textit{component} of $m$ if
\begin{equation*}
m \rightarrow_M c.
\end{equation*}
Given $m \in Obj(Mol)$. $Comp(m)$ denotes the collection of all components of $m$, i.e.,
\begin{equation*} 
Comp(m):=\{c \in Obj(Mol)\ |\  m \rightarrow_M c\} \quad \subset Obj(Mol).
\end{equation*}
Given $m_1, m_2 \in Obj(Mol)$. Then, $m_1$ and $m_2$ can interact if
\begin{equation*}
\exists \lambda \in MOL(M_0),\ P \in Part(\lambda) \text{ s.t. } m_1, m_2 \in Comp(P).
\end{equation*}
Note that the interaction of $m_1$ and $m_2$ may require the addition of other molecules (i.e., ``allosteric regulation'' \cite{NM1}).
\end{remark} 

\begin{remark} 
Given $m$, $P \in Obj(Mol)$ such that $P \in Part(m)$. Roughly speaking, a molecule $m$ corresponds to an ``inductive limit,'' and a partition $P$ corresponds to a ``projective limit,'' i.e.,
\begin{align*}
m  &\leftrightarrow \varinjlim_{Q \in Part(m)}Q,\\
P &\leftrightarrow \varprojlim_{\iota \in Comp(P)} \iota.
\end{align*}
In particular,
\begin{equation*}
A \leftrightarrow \varprojlim_{m \in INT(Mol)} m.
\end{equation*}
\end{remark}

\begin{remark} 
In \cite{NM2}, \textit{$L$-spectrum} of $U \in RAT(Mol)$ is defined by
\begin{multline*}
L\text{-}spec(U):=\{ V \in INT(Mol)\  |\ 
\exists W \in RAT(Mol) \text{ such that }\\
 W \rightarrow_M V \text{ and } W \rightarrow_M U \}.
\end{multline*} 
\textit{Topology} $K$ on $RAT(Mol)$ is defined by
\begin{equation*}
K(U):=\{W \in RAT(Mol)\  |\  W \rightarrow_M U \} .
\end{equation*}
A collection $F(U)$ of loops on $U$ is defined by
\begin{equation*}
F(U):= \{W \in RAT(Mol)\  |\  U \rightarrow_M W \}.
\end{equation*}
\end{remark}

In summary, if we measure the shape of $m \in RAT(Mol)$ using the hexagons of $A$ (with respect to $\rightarrow_M$), we obtain an integral molecule $\lambda \in Spec(m)$. $m$ is then obtained by dividing $\lambda$ into parts. 

\begin{remark}[Quantum observation of $m$]
Given $m \in Obj(Mol)$ and $\lambda \in Spec(m)$. In our framework, information about $m$ is obtained through $\lambda$. That is, $\lambda$ is ``observed'' as a result of the ``measurement'' of $m$.
\end{remark}

Now, a defining system of equations for a rational molecule $m \in RAT(Mol)$ is obtained using its spectra $Spec(m) \subset INT(Mol) \cap MOL(M_0)$.

Given $m \in RAT(Mol)$. Let
\begin{align*}
Spec(m)&=\{ \lambda_1,\  \lambda_2,\  \dots,\ \lambda_n\}  \quad (n \in \mathbf{N})\\
Part(\lambda_i, m)&=\{ P_{i, 1},\  P_{i,2},\  \ldots,\  P_{i,n_i}\}  \quad (n_i \in \mathbf{N},\ i=1,2,\ldots,n),\\
P_{i,j}&={\phi}_{i,j,1}{\phi}_{i,j,2} \cdots {\phi}_{i,j,k_{i,j}} \quad (k_{i,j} \in \mathbf{N},\ j \in [1, n_i],\  i \in [1,n]).
\end{align*}
For each pair of $\lambda_i \in Spec(m)$ and $P_{i,j} \in Part(\lambda_i,m)$, a system of equations is obtained as follows.

Pick a pair $\lambda_i$ and $P_{i,j}$. By definition, $\exists l_{i,j} \in [1,k_{i,j}]$ such that
\begin{equation*} 
\left\{
\begin{aligned}
m&={\phi}_{i,j,l_{i,j}}, \\
\lambda_i&= m + \sum_{k \in [1,k_{i,j}] \setminus \{l_{i,j}\}} {\phi}_{i,j,k}.
\end{aligned}
\right.
\end{equation*} 
In particular,
\begin{equation*}
|\lambda_i|=||m| |\prod_{k \in [1,k_{i,j}] \setminus \{l_{i,j}\}} {\phi}_{i,j,k}||.
\end{equation*}
By abuse of notation, we write
\begin{equation*}
|m|=|\lambda_i|/ |\prod_{k \in [1,k_{i,j}] \setminus \{l_{i,j}\}} {\phi}_{i,j,k}|
\end{equation*}
dividing both sides by $|\prod_{k \in [1,k_{i,j}] \setminus \{l_{i,j}\}} {\phi}_{i,j,k}|$, where
\begin{align*}
Spec(m)&=Spec(\phi_{i,j,l_{i,j}})=\{ \lambda_1,\  \lambda_2,\  \dots,\ \lambda_n\},\\
Spec(\phi_{i,j,k})&=\{{\lambda}_{i,j,k,1},\ {\lambda}_{i,j,k,2},\ \ldots ,\ {\lambda}_{i,j,k,g_{i,j,k}} \}
\end{align*}
($g_{i,j,k} \in \mathbf{N} ,\ k \in [1,k_{i,j}] \setminus \{ l_{i,j}\}$).

\begin{remark}
In the normal notation, this is the set difference, i.e., $|m|=|\lambda| \setminus |\prod_{k \neq l} {\phi}_{i,j,k}|$. The multiplicative notation is used here because region complices are denoted by juxtaposition (see the last paragraph of Section \ref{sec2}).
\end{remark}

Replacing $m$ and $ {\phi}_{i,j,k}$’s with indeterminants $x$ and $x_k$’s, respectively, we obtain a collection of equations:
\begin{equation}\label{def_eq2}
\left\{
\begin{aligned}
|x|&=|\lambda_i|/ |\prod_{k \in [1,k_{i,j}] \setminus\{l_{i,j}\}} x_{k}|, \\
Spec(x)&=\{{\lambda}_{1},\ {\lambda}_{2},\ \ldots ,\ {\lambda}_{n} \},\\
Spec(x_k)&=\{{\lambda}_{i,j,k,1},\ {\lambda}_{i,j,k,2},\ \ldots ,\ {\lambda}_{i,j,k,g_{i,j,k}} \}
\end{aligned}
\right.
\end{equation}
($k \in [1,k_{i,j}] \setminus \{ l_{i,j}\},\ j \in [1, n_i],\  i \in [1,n]$).
\begin{remark}
$Spec(x_k)$ gives the values observed when measuring $x_k$.
\end{remark}

\begin{figure}
\centering
\captionsetup{width=1.0\linewidth}
\includegraphics{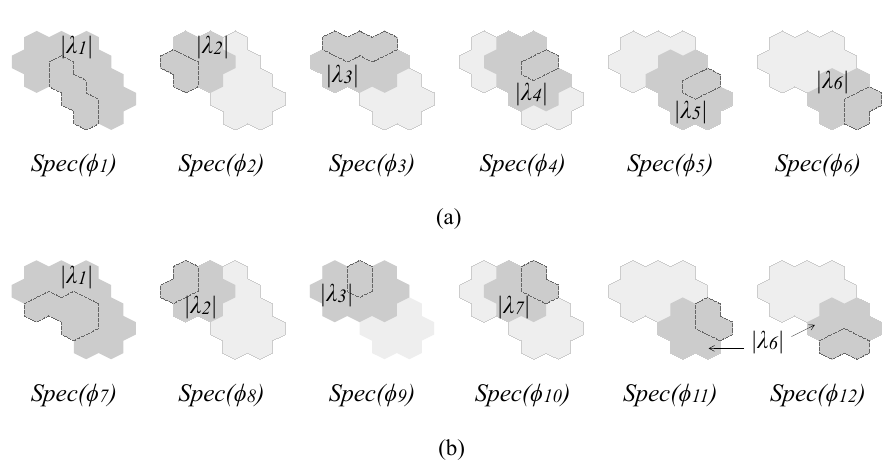}
\caption{Spectra of rational molecules of Figure \ref{figure3a}.}
\label{figure3b}
\end{figure}

\begin{definition} [The defining system of equations for a rational molecule (1)]\label{def_shape1}
\textbf{The case when $Spec(x_{k})=Spec(x_{k'})$ for $\forall k,\ k' \in [1,k_i]$:} Equation(\ref{def_eq2}) yields
\begin{equation*}
\left\{
\begin{aligned}
|x|&=|\lambda_i|/ |\prod_{k \in [1,k_{i,j}] \setminus\{l_{i,j}\}} x_{k}|, \\
Spec(x)&=Spec(x_{k})=\{{\lambda}_{1},\ {\lambda}_{2},\ \ldots ,\ {\lambda}_{n} \}
\end{aligned}
\right.
\end{equation*}
($k \in [1,k_{i,j}] \setminus \{ l_{i,j}\},\ j \in [1, n_i],\  i \in [1,n]$). Since $x_k$'s are indistinguishable by their spactra, we identify them all. The defining system of equations for $m$ is then given by
\begin{equation*}
|x^{k_{i,j}}|=|\lambda_i|
\end{equation*}
($j \in [1, n_i],\  i \in [1,n]$).
\end{definition}

\begin{example}
In Figure \ref{figure3} (a), $Spec(m)=Spec(\phi1)=Spec(\phi3)=\{\lambda\}$. As the defining equation for $C$, we obtain
\begin{equation*}
|x^3| =|\lambda|,
\end{equation*}
where $\lambda=h_1+h_2+ \cdots +h_7 \in INT(Mol) \cap MOL(M_0)$.
\end{example}

\begin{definition} [The defining system of equations for a rational molecule (2)]\label{def_shape2}
\textbf{The case when all $Spec(x_i)$'s are monochromatic:} Equation(\ref{def_eq2}) yields
\begin{equation*}
\left\{
\begin{aligned}
|x|&=|\lambda_i|/ |\prod_{k \in [1,k_{i,j}] \setminus\{l_{i,j}\}} x_{k}|, \\
Spec(x)&=\{{\lambda}_{1},\ {\lambda}_{2},\ \ldots ,\ {\lambda}_{n} \}, \\
Spec(x_k)&=\{{\lambda}_{i,j,k}\}  \bmod \ RM(Obj(Mol))
\end{aligned}
\right.
\end{equation*}
($k \in [1,k_{i,j}] \setminus \{ l_{i,j}\},\ j \in [1, n_i],\  i \in [1,n]$). By identifying $x_i$ with $\lambda_{i,j,k}$, we obtain the defining system of equations for $m$ by
\begin{equation*}
|x|=|\lambda_l|/ |\prod_{k \in [1,k_{i,j}] \setminus\{l_{i,j}\}} \lambda_{i,j,k}|
\end{equation*}
($j \in [1, n_i],\  i \in [1,n]$).
\end{definition}

\begin{example}
In the case of Figure \ref{figure3a} (a), $Spec(x)$ and all $Spec(x_i)$'s are monochromatic. As the defining equation for $m_1$, we obtain
\begin{equation*}
|x|=|\lambda_1|/ |\lambda_2\lambda_3\lambda_4\lambda_5\lambda_6|,
\end{equation*}
where $\lambda_i$'s are given in Figure \ref{figure3b} (a).
\end{example}

\begin{example}
In the case of Figure \ref{figure3a} (b), $Spec(x)$ all $Spec(x_i)$'s are monochromatic. As the defining equation for $m_2$, we obtain
\begin{equation*}
|x|=|\lambda_1|/ |\lambda_2\lambda_3\lambda_7\lambda_6^2|,
\end{equation*}
where $\lambda_i$'s are given in Figure \ref{figure3b} (b).
\end{example}

\begin{definition} [The defining system of equations for a rational molecule (3)]\label{def_shape3}
\textbf{The case when $Spec(m)$ is monochromatic:} Equation(\ref{def_eq2}) yields
\begin{equation*}
\left\{
\begin{aligned}
|x|&=|\lambda|/ |\prod_{k \in [1,k_{j}] \setminus\{l_{j}\}} x_{k}|, \\
Spec(x)&=\{{\lambda}\}  \bmod \ RM(Obj(Mol)), \\
Spec(x_k)&=\{{\lambda}_{j,k,1},\ {\lambda}_{j,k,2},\ \ldots ,\ {\lambda}_{j,k,g_{j,k}} \}.
\end{aligned}
\right.
\end{equation*}
($k \in [1,k_{j}] \setminus \{ l_{j}\},\ j \in [1, n]$). For each combination 
\begin{equation*}
(\lambda_1,\ \ldots, \lambda_{l_j-1},\ \lambda_{l_j+1},\ \ldots, \lambda_{k_j}) \in 
\prod_{k \in [1,k_{j}] \setminus\{l_{j}\}} Spec(x_k)
\end{equation*}
($j \in [1, n]$), we obtain an equation for $m$.
\end{definition}

\begin{definition} [The defining system of equations for a rational molecule (4)]\label{def_shape4}
\textbf{Other cases:} For each $\lambda_i \in Spec(m)$, each combination 
\begin{equation*}
(\lambda_1,\ \ldots, \lambda_{l_{i,j}-1},\ \lambda_{l_{i,j}+1},\ \ldots, \lambda_{k_{i,j}} \in 
\prod_{k \in [1,k_{i,j}] \setminus\{l_{i,j}\}} Spec(x_k)
\end{equation*}
($j \in [1, n_i],\  i \in [1,n]$) yields an equation for $m$.
\end{definition}

Molecules are now described using integral molecules, just as rational numbers are written using integers. The question is, as mentioned at the beginning of this subsection, ``To what extent can a molecule be specified by integral molecules?''
As of now, I have no answer to the question.

\begin{figure}
\centering
\captionsetup{width=1.0\linewidth}
\includegraphics{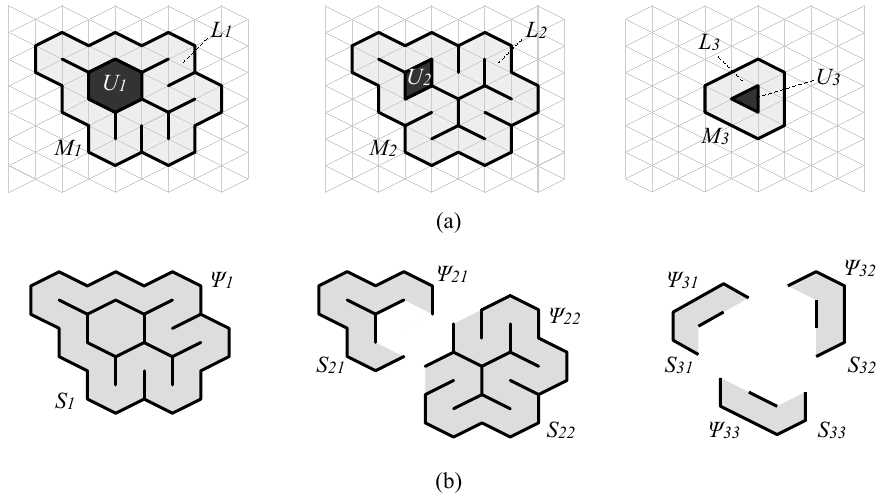}
\caption{(a) Three types of holes within a loop. (b) The parts of molecules to patch.}
\label{figure4}
\end{figure}

\subsection{Is the given loop a molecule?}\label{sec32}\ 

Using concepts from cohomology theory, we derive an equation that determines whether holes within a loop can be decomposed into loops.

\subsubsection{Singularities of a loop }\label{sec321}\ 

Given $M \subset M_0$ (Definition \ref{def_mesh}) and $L \in LOOP(M)$ (Definition \ref{def_loop}). Recall that loops may have holes inside, i.e., ``singularities.'' 

\begin{definition} [Singularities]
Given $M \subset M_0$, $L \in LOOP(M)$, and $U \in HOLE(L)$ (Definition \ref{def_hole}). $U$ is called \textit{singular} if it cannot be decomposed into loops. A singular region in $HOLE(L)$ is called a \textit{singularity} of $L$.
\end{definition}

Given $M \subset M_0$, $\psi_1, \psi_2 \in MOL(M)$ (Definition \ref{def_mol}), and $\psi_3 \in LOOP(M)$. Suppose $|\psi_3|=|\psi_1| \cup |\psi_2|$. Then, by Lemma \ref{INT_and_LD}, $\psi_1$ and $\psi_2$ interact if all regions in $HOLE(\psi_3)$ are not singular. 
Therefore, to determine whether $\psi_1$ and $\psi_2$ interact, it is necessary to determine whether all holes within $\psi_3$ are singular or not. 

\begin{example}\label{ex_holes}
Figure \ref{figure4} (a) shows three types of holes within a loop. On the left, 
\begin{equation*}
HOLE(L_1)=\{U_1\}.
\end{equation*}
Since $U_1$ can be decomposed into a loop of length $6$ (Figure \ref{figure4} (b) left), $U_1$ is not singular. Therefore, $L_1$ is a molecule.

In the middle, 
\begin{equation*}
HOLE(L_2)=\{U_2\}.
\end{equation*}
Since $U_2$ is singular, $L_2$ is not a molecule.

On the right, 
\begin{equation*}
HOLE(L_3)=\{U_3\}.
\end{equation*}
Since $U_3$ is singular, $L_3$ is not a molecule.

As explained below (Example \ref{ex_19} and Example \ref{ex_20}), $L_2$ is obtained by patching parts of two molecules together, and $L_3$ is obtained by patching parts of three molecules together (Figure \ref{figure4} (b) middle and right).
\end{example}

\subsubsection{Flows of Triangles} \label{sec322}

\paragraph{3.2.2.1 \textit{Regular Flows}}\label{para3221} 
Given $M \subset M_0$ and $R \subset REG(M)$ (Definition \ref{def_region}). We define ``flows'' on $M$ to consider loop decompositions of $R$.

\begin{definition} [$FLOW(M)$]
Given $M \subset M_0$ and $\Psi \subset TRAJ(M)$ (Definition \ref{def_traj}). $\Psi$ is called a \textit{flow} on $M$ if the trajectories of $\Psi$ do not intersect and $\Psi$ covers the entire $M$. i.e.,
\begin{enumerate}
\item For $\forall \psi_1,\psi_2 \in \Psi$, $\psi_1$ and $\psi_2$ share no triangles if  $\psi_1 \neq \psi_2$,
\item For $\forall abc \in Tri(M),\ \exists \psi \in \Psi$ uniquely such that $abc \in \psi$.
\end{enumerate}
A flow may have isolated triangles, terminal triangles, regular triangles, and branch triangles. $FLOW(M)$ denotes the collection of all flows on $M$. 
\end{definition}

\begin{definition}[Regular flows on $M$]
Given $M \subset M_0$ and $\Psi \in FLOW(M)$. $\Psi$ is called \textit{regular on $M$} if all trajectories of $\Psi$ have no isolated, no terminal, or no branch triangles. Note that \underline{$M$ may have holes inside}.
\end{definition}

\paragraph{3.2.2.2 \textit{Collective Flows}}\label{para3223}
Given $M \subset M_0$. By patching flows on submeshes of $M$ together, we obtain a flow on $M$.

\begin{definition} [$|M|$, $Bd(M)$, and $Int(M)$]
Given $M \subset M_0$. $|M|$ denotes the region occupied by $M$, i.e.,
\begin{equation*}
|M|:=\bigcup_{abc \in Tri(M)} abc \in REG(M).
\end{equation*}
$Bd(M)$ denotes the \textit{boundary} of $|M|$ i.e.,
\begin{equation*}
Bd(M):=\bigcup_{ab \in E(M)} ab \subset \mathbf{R}^2,
\end{equation*}
where
\begin{multline*}
E(M):=\{ab \in Edge(M) \ | \exists c \in Tri(M_0) \text{ such that }\\
 abc \in Tri(M_0) \setminus Tri(M)\}.
\end{multline*}
$Int(M)$ denotes the \textit{interior} of $|M|$, i.e., 
\begin{equation*}
Int(M):=|M| \setminus Bd(M) \subset \mathbf{R}^2.
\end{equation*}
\end{definition}

\begin{definition}[Open covering of $M$]\label{def_covering}
Given $M \subset M_0$ and $S_i \subset M$ ($i \in I$), where $I \subset \mathbf{Z}$. Set $V=\{S_i \ |\ i \in I \}$. 
$V$ is called an \textit{open covering} of $M$ if 
\begin{enumerate}
\item $Int(M)=\bigcup_{i \in I} Int(S_i)$, and
\item $|S_i|$ has no hole inside ($i \in I$). 
\end{enumerate}
\end{definition}

\begin{remark}
In the following, the index set $I$ of an open covering is always a subset of $\mathbf{Z}$, i.e., $I \subset \mathbf{Z}$.
\end{remark}

\begin{definition}[$\Psi|_S$]
Given $S, M \subset M_0$ such that $S \subset M$. Given $\Phi \in FLOW(S)$ and $\Psi \in FLOW(M)$, where
\begin{align*}
\Phi&=\{\phi_{i} \in TRAJ(S) \ |\ i\in I_S\} \quad (I_S \subset \mathbf{Z}),\\
\Psi&=\{\psi_{j} \in TRAJ(M) \ |\ j \in I_M\} \quad (I_M \subset \mathbf{Z}). 
\end{align*}
$\Phi$ is called the \textit{restriction of $\Psi$ on $S$} if
\begin{equation*}
\text{for }\forall \phi_{i} \in \Phi,\  \exists \psi_{j} \in \Psi \text{ such that }  \phi_{i} \subset \psi_{j}. 
\end{equation*}
Recall that we write $\phi_{i} \subset \psi_{j}$ if $abc \in \psi_{j}$ for $\forall abc \in \phi_{i}$ (Definition \ref{def_traj}).
The restriction of $\Psi$ on $S$ is uniquely determined and denoted by ${\Psi}|_{S}$, i.e.,
\begin{equation*}
 \Phi={\Psi}|_{S} \quad  \text{if $\Phi$ is the restriction of $\Psi$ on $S$}.
 \end{equation*}
\end{definition}

\begin{example}\label{ex_17}
In Figure \ref{figure4} (a) left, $L_1$ is a loop of length $42$ on $M_0$ such that $HOLE=\{U_1\}$. 
Define $M_1,\ M_{U_1} \subset M_0$ by
\begin{align*}
Tri(M_1)&=\{abc \in Tri(M_0)\  |\  abc \in L_1\},\\
Tri(M_{U_1})&=\{abc \in Tri(M_0) \  |\  |abc| \subset U_1\}.
\end{align*}
Then, $\Psi_1 \in FLOW(M_1\cup M_{U_1})$ (Figure \ref{figure4} (b) left) and
\begin{equation*}
\Psi_1|_{M_1}=\{L_1\} \in FLOW(M_1).
\end{equation*}
\end{example}

\begin{definition}[Consistent collection of flows] \label{def_consistancy21}
Given $M \subset M_0$, its open covering $V=\{S_i \subset M\  |\  i \in I \}$, and locally defined flows 
\begin{equation*}
CF=\{{\Psi}_i \in FLOW(S_i) \ |\ i \in I\}.
\end{equation*}
$CF$ is called \textit{consistent} if
\begin{equation*}
{\Psi_i}|_{S_i \cap S_j}={\Psi_j}|_{S_i \cap S_j} \quad  (i, j \in I).
\end{equation*}
\end{definition}

\begin{lemma}[Collective flows] \label{lem_colletiveflows}
Given $M \subset M_0$, its open covering $V=\{S_i \subset M \  |\  i \in I \}$, and locally defined flows 
\begin{equation*}
CF=\{{\Psi}_i \in FLOW(S_i) \   |\  i \in I \}.
\end{equation*}
Suppose $CF$ is consistent. Then, a flow $\Phi \in FLOW(M)$ is defined by
\begin{equation*}
\Phi|_{S_i}:=\Psi_{i} \in FLOW(S_i) \quad (i \in I).
\end{equation*}
$\Phi$ is called the \textit{collective} flow on $M$ (defined by $CF$) and denoted as a formal sum of locally defined flows , i.e.,
\begin{equation*}
\Phi=\sum_{\Psi_i \in CF} \Psi_i \quad \in FLOW(M).
\end{equation*}
\end{lemma}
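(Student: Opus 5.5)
The plan is to build $\Phi$ triangle by triangle. We must check that the resulting collection of trajectories is a flow on $M$ and that its restriction to each $S_i$ recovers $\Psi_i$.

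\textbf{Step 1: every triangle is covered.} Let $abc \in Tri(M)$. The open interior of the triangle $abc$ lies in $Int(M)$. By condition (1) of Definition \ref{def_covering}, $Int(M)=\bigcup_{i\in I} Int(S_i)$, so there is some $i \in I$ with $abc \in Tri(S_i)$. Hence the sets $Tri(S_i)$ together cover $Tri(M)$.

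\textbf{Step 2: adjacency is defined consistently.} For each edge $ab$ shared by two triangles $abc, abd \in Tri(M)$, declare them connected in $\Phi$ if they are connected in some $\Psi_i$ with $abc, abd \in Tri(S_i)$. This needs two checks.

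First, the answer must not depend on $i$. If both triangles lie in $S_i$ and in $S_j$, then they lie in $S_i\cap S_j$. Consistency gives $\Psi_i|_{S_i\cap S_j}=\Psi_j|_{S_i\cap S_j}$, so both flows agree on whether the common edge is used.

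Second, every adjacent pair must lie in some common $S_i$. The open segment $ab$ minus its endpoints lies in $Int(M)$, so a point of it lies in some $Int(S_i)$. Since $S_i$ is a union of whole triangles, a neighbourhood of that point lies in $|S_i|$, so both $abc$ and $abd$ belong to $Tri(S_i)$.

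\textbf{Step 3: build $\Phi$ and restrict.} Let $\Phi$ be the set of connected components of the resulting graph on $Tri(M)$. Each component is a trajectory, and distinct components share no triangles. Every triangle lies in exactly one component by Step 1, so $\Phi \in FLOW(M)$.

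Restricting $\Phi$ to $S_i$ keeps exactly the connections of $\Psi_i$, by Step 2. Components restricted to $S_i$ therefore split into the trajectories of $\Psi_i$, which gives $\Phi|_{S_i}=\Psi_i$.

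Uniqueness follows because any flow whose restrictions are the $\Psi_i$ must have exactly these connections, by the second check in Step 2.

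\textbf{Main obstacle.} The delicate point is the second check of Step 2: that an adjacency across an interior edge is seen inside a single $S_i$. This is where openness of the covering (condition (1)) is essential. Edges on $Bd(M)$ are not an issue, since only edges between two triangles of $M$ matter.

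A secondary issue is that "restriction" is defined only through inclusion of trajectories. The proof should interpret $\Phi|_{S_i}$ as the flow on $S_i$ induced by keeping the connections of $\Phi$ whose two triangles both lie in $S_i$.
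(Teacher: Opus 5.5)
Your proof is correct and follows the paper's approach; the paper's own proof is only the remark that the lemma follows immediately from the definitions. You supply the omitted details: gluing connections chart by chart, using consistency on $S_i\cap S_j$ for well-definedness, and using condition (1) of Definition \ref{def_covering} to see each adjacency of $M$ inside a single $S_i$ (which gives uniqueness), with $\Psi|_S$ rightly read as the induced flow, since the paper's literal containment-based definition would not make restrictions unique.
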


\begin{proof}
It follows immediately from the definitions.
\end{proof}

\begin{corollary}\label{cor_colletiveflows}
$\Psi$ is regular on $M$ if and only if $\Psi_i$ is regular on $S_i$ for $\forall i \in I$.
\end{corollary}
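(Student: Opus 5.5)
The corollary concerns the collective flow $\Psi=\sum_{\Psi_i\in CF}\Psi_i$ of Lemma \ref{lem_colletiveflows}, and the plan is to reduce regularity to a local, triangle-by-triangle condition. For a flow $\Phi$ on a mesh $N$, whether a triangle $abc\in Tri(N)$ is isolated, terminal, regular or branch depends only on which of its (at most three) edges are used as connections in the trajectory of $\Phi$ containing $abc$. Thus $\Phi$ is regular on $N$ if and only if every triangle of $N$ has exactly two connecting edges. This is a pointwise condition, so it is compatible with the open covering $V$, provided we check that connections are preserved under restriction.

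\textbf{Step 1: connectivity of $\Psi$ and of $\Psi_i$ agree on shared edges.} Fix $abc\in Tri(S_i)$. If an edge $ab$ of $abc$ connects $abc$ to $abd$ in $\Psi_i$, then both triangles lie in one trajectory $\phi\in\Psi_i$. Since $\Psi|_{S_i}=\Psi_i$, the trajectory $\phi$ is contained in a trajectory of $\Psi$, so the same connection appears in $\Psi$. Conversely, suppose $abc$ and $abd$ are connected in $\Psi$ and both lie in $S_i$. Then the corresponding piece of the trajectory restricts to $S_i$, and consistency with the restriction shows the connection appears in $\Psi_i$.

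\textbf{Step 2: the hard direction.} The main obstacle is the case where $abd\notin Tri(S_i)$: then $abc$ may look terminal in $\Psi_i$ yet be regular in $\Psi$. This is where condition (1) of Definition \ref{def_covering}, $Int(M)=\bigcup Int(S_i)$, enters. If $abc$ and $abd$ are both triangles of $M$, the open edge $ab$ (minus its endpoints) lies in $Int(M)$. Hence some $S_j$ contains it in its interior, and therefore $S_j$ contains both $abc$ and $abd$. Consequently every connection of $\Psi$ is witnessed inside some $S_j$.

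I would therefore first make precise that regularity of $\Psi_i$ on $S_i$ should be read with respect to connections available in $S_i$, with edges on $Bd(S_i)$ that are interior to $M$ excluded from the count. Alternatively, one can choose for each triangle a member $S_j$ containing all three of its neighbours in $M$. This requires the stronger property that the open star of $abc$ is covered by a single $Int(S_j)$, and I would try to derive that from the union condition together with condition (2) of Definition \ref{def_covering} (no holes).

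\textbf{Step 3: the two directions.} Assume each $\Psi_i$ is regular. Take any $abc\in Tri(M)$ and a $j$ as in Step 2. Then $abc$ has exactly two connections in $\Psi_j$, and by Step 1 exactly two in $\Psi$, so $\Psi$ is regular. Conversely, assume $\Psi$ is regular. Each $abc\in Tri(S_i)$ has exactly two connections in $\Psi$, and by Step 1 these are exactly its connections in $\Psi_i$ among triangles of $S_i$; so $\Psi_i$ is regular in the (local) sense fixed in Step 2.
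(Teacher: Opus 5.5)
Your plan (regularity is a condition on each triangle, transported through the covering and the restrictions) is what the paper's one-line proof intends. However, the ``if'' direction in your Step 3 has a genuine gap.

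That step needs an $S_j$ containing $abc$ together with all its neighbours in $M$, and Definition \ref{def_covering} does not provide one. Condition (1) only gives, edge by edge, an $S_j$ containing the two triangles on that edge, which is all your Step 2 shows; condition (2) does not help. Your fallback reading, in which edges of $Bd(S_i)$ interior to $M$ are not counted, actually makes the implication false. Here is a counterexample:
\begin{itemize}
\item Take adjacent vertices $u,v$. Let $M$ be the ten triangles of the hexagons $U_u,U_v$ centred at $u$ and $v$, and let $V=\{U_u,U_v\}$. This is an open covering, since every vertex other than $u,v$ lies on $Bd(M)$.
\item Let $\Psi$ be the single trajectory in which any two triangles of $M$ sharing an edge are connected.
\item Then $\Psi|_{U_u}$ and $\Psi|_{U_v}$ are the two hexagonal loops. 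They are regular in your sense, and even in the literal sense.
\item But each of the two triangles containing $uv$ is joined across $uv$, across an edge through $u$, and across an edge through $v$. So both are branch triangles of $\Psi$, and neither hexagon contains all their neighbours.
\end{itemize}

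What is missing is the convention under which the paper's ``immediate from the definitions'' is correct. The paper uses it itself when it calls $\Psi(Cone\ A, M)=\Psi_{Cone\ A}|_M$ regular, and when Example \ref{example_hex} lists several regular flows on a single hexagon. Under this convention:
\begin{itemize}
\item Restriction to $S_i$ remembers, for each triangle of $S_i$, which of its three edges are used. This includes an edge of $Bd(S_i)$ through which the trajectory leaves $S_i$.
\item ``Regular'' means every triangle has exactly one normal (unused) edge.
\end{itemize}
This convention is also what your Step 1 needs, since $\phi\subset\psi_j$ as sets of triangles says nothing about which edges are used. Under it, $\Psi|_{U_u}$ in the example is not regular, because each triangle containing $uv$ uses all three of its edges.

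With this convention the condition is genuinely pointwise. The only covering fact required is that every triangle of $M$ lies in some $S_i$: an interior point of the triangle lies in $Int(M)$, hence in some $Int(S_i)\subset|S_i|$. Both directions then follow at once, with no need for your edge-by-edge Step 2 or any star property of the covering.
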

\begin{proof}
It follows immediately from the definitions.
\end{proof}

\begin{example}\label{ex_18}
In Figure \ref{figure4} (a) left (Example \ref{ex_17}), set 
\begin{align*}
&M=M_1 \cup M_{U_1},\\ 
&V=\{S_1\}, \\ 
&CF=\{\Psi_1 \}, 
\end{align*}
where $S_1=M_1 \cup M_{U_1}$ and $\Psi_1 \in FLOW(S_1)$ (Figure \ref{figure4} (b) left). Then,
\begin{equation*}
\{L_1\}=\Psi_1|_{S_1}.
\end{equation*}
\end{example}

\begin{example}\label{ex_19}
In Figure \ref{figure4} (a) middle, $L_2$ is a loop of length $46$ on $M_0$ such that $HOLE(L_2)=\{U_2\}$. Define $M_2, M_{U_2} \subset M_0$ by
\begin{align*}
Tri(M_2)&=\{abc \in Tri(M_0)\  |\  abc \in L_2\},\\
Tri(M_{U_2})&=\{abc \in Tri(M_0) \  |\  |abc| \subset U_2\}.
\end{align*}
Then, $\{L_2\} \in FLOW(M_2)$. 
Set 
\begin{align*}
&M=M_2,\\
&V=\{S_{21},\ S_{22}\},\\
&CF=\{\Psi_{21}, \Psi_{22} \},
\end{align*}
where $\Psi_{2i} \in FLOW(S_{2i})$ ($i=1,2$) (Figure \ref{figure4} (b) middle). Then,
\begin{equation*}
\{ L_2\}= \Psi_{21} + \Psi_{22}.
\end{equation*}
\end{example}

\begin{example}\label{ex_20}
In Figure \ref{figure4} (a) right, $L_3$ is a loop of length $12$ on $M_0$ such that $HOLE(L_3)=\{U_3\}$. 
Define $M_3, M_{U_3} \subset M_0$ by
\begin{align*}
Tri(M_3)&=\{abc \in Tri(M_0)\  |\  abc \in L_3\},\\
Tri(M_{U_3})&=\{abc \in Tri(M_0) \  |\  |abc| \subset U_3\}.
\end{align*}
Then, $\{L_3\} \in FLOW(M_3)$. Set 
\begin{align*}
&M=M_3, \\
&V=\{S_{31},\ S_{32},\ S_{33}\},\\
&CF=\{\Psi_{31},\ \Psi_{32},\ \Psi_{33} \},
\end{align*}
where $\Psi_{3i} \in FLOW(S_{3i})$ ($i=1,2,3$) (Figure \ref{figure4} (b) right). Then,
\begin{equation*}
\{L_3\}=\Psi_{31} + \Psi_{32}  + \Psi_{33}.
\end{equation*}
\end{example}

\subsubsection{Potential Functions, Vector Fields, and Fluxes on $M$}\label{sec323}\ 

Given $M \subset M_0$ and $\Psi \in FLOW(M)$. We define $\mathbf{Z}$-valued linear functions on $Ver(M)$, $Edge(M)$, and $Tri(M)$ to infer local structures (e.g., vector fields) of $\Psi$ from global objects (e.g., potential functions) definded on $\Psi$.

\begin{definition} [Potential functions $C^0(M)$]
Given $M \subset M_0$. $C^0(M)$ is the collection of all $\mathbf{Z}$-valued linear functions defined on $C_0(M)$, i.e.,
\begin{equation*}
C^0(M):=Hom(C_0(M), \mathbf{Z}),
\end{equation*}
where $C_0(M)$ is the collection of all formal finite sums of the vertices of $M$ i.e.,
\begin{multline*}
C_0(M):=\{ n_1v_1 + n_2v_2 + \ldots + n_kv_k  \ |\   k \in \mathbf{N},\\ 
n_i  \in \mathbf{Z},\  v_i \in Ver(M) \ (i=1, 2, \ldots, k))\}.
\end{multline*}
Elements of $C^0(M)$ are called \textit{potential functions} (or \textit{$0$-forms}) on $M$. 
\end{definition}

\begin{definition} [Vector fields $C^1(M)$]
Given $M \subset M_0$. $C^1(M)$ is the collection of all $\mathbf{Z}$-valued linear functions defined on $C_1(M)$, i.e.,
\begin{equation*}
C^1(M):=Hom(C_1(M), \mathbf{Z}),
\end{equation*}
where $C_1(M)$ is the collection of all formal finite sums of the \underline{oriented} edges of $M$ i.e.,
\begin{multline*}
C_1(M):=\{ n_1e_1 + n_2e_2 + \ldots + n_ke_k  \ |\   k \in \mathbf{N},\\
n_i  \in \mathbf{Z},\ e_i \in Edge(M) \ (i=1, 2, \ldots, k)\},
\end{multline*}
where $ab \in Edge(M)$ changes its sign when the vertices are swapped, i.e., $ba=-ab$. For example, 
\begin{equation*}
\beta(ba)=-\beta(ab) 
\end{equation*}
for $\beta \in C^1(M)$ and $ab \in Edge(M)$. Elements of $C^1(M)$ are called \textit{vector fields} (or \textit{$1$-forms}) on $M$. 
\end{definition}

\begin{definition} [Fluxes $C^2(M)$]
Given $M \subset M_0$. $C^2(M)$ is the collection of all $\mathbf{Z}$-valued linear functions defined on $C_2(M)$, i.e.,
\begin{equation*}
C^2(M):=Hom(C_2(M), \mathbf{Z}),
\end{equation*}
where $C_2(M)$ is the collection of all formal finite sums of the \underline{oriented} triangles of $M$ i.e.,
\begin{multline*}
C_2(M):=\{ n_1t_ + n_2t_2 + \ldots + n_kt_k  \ |\   k \in \mathbf{N},\\
n_i  \in \mathbf{Z},\  t_i \in Tri(M)\  (i=1, 2, \ldots, k)\},
\end{multline*}
where $abc \in Tri(M)$ changes its sign when the vertices are swapped, i.e., $bac=-abc$, $acb=-abc$, etc. For example, 
\begin{equation*}
\gamma(bac)=-\gamma(abc),\  \gamma(acb)=-\gamma(abc),\  \ldots
\end{equation*}
for $\gamma \in C^2(M)$ and $abc \in Tri(M)$. Elements of $C^2(M)$ are called \textit{fluxes} (or \textit{$2$-forms}) on $M$. 
\end{definition}

\subsubsection{Differential Structure on $M$}\label{sec324}\    

Given $M \subset M_0$ and $\Psi \in FLOW(S)$. We define a differential structure on $M$ to consider local structures of $\Psi$.

\begin{figure}
\centering
\captionsetup{width=1.0\linewidth}
\includegraphics{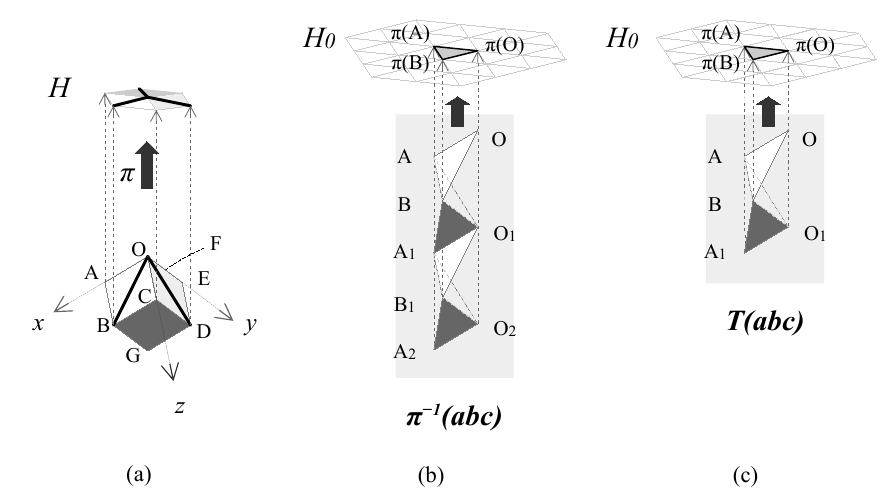}
\caption{The differential structure on $M$.}
\label{figure5}
\end{figure}

\paragraph{3.2.4.1 \textit{Tangent Space $T(M)$}}\label{para3241} 
First, we embed $M_0$ (Definition \ref{def_mesh}) into $\mathbf{R}^3$.

\begin{definition}[$Tri(\mathbf{R}^3)$]
$Tri(\mathbf{R}^3)$ is the collection of triangles in $\mathbf{R}^3$ defined by
\begin{equation*}
Tri(\mathbf{R}^3):= \{ OAB,\ OBC,\ OCD,\ ODE,\ OEF, \text{ and } OFA \ |\  (l, m, n)\in \mathbf{Z}^3 \},
\end{equation*}
where
\begin{align*}
&O=(l,\ m,\ n),\ A=(l+1,\ m,\ n),\ B=(l+1,\ m+1,\ n),\\
&C=(l,\ m+1,\ n),\ D=(l,\ m+1,\ n+1),\ E=(l,\ m,\ n+1),\\
&F=(l+1,\ m,\ n+1), \text{ and }G=(l+1,\ m+1,\ n+1) \in \mathbf{Z}^3
\end{align*}
(Figure \ref{figure5} (a)). The elements of $Tri(\mathbf{R}^3)$ are called \textit{slant} triangles.
\end{definition}

\begin{definition}[Mesh $H_0$]\label{def_H0}
Let $H \subset \mathbf{R}^3$ be the hyperplane defined by
\begin{equation*}
H:=\{(x,\ y,\ z) \in \mathbf{R}^3 \ |\  x+y+z=0\}.
\end{equation*}
Let $\pi:\  \mathbf{R}^3 \rightarrow H$ be the vertical projection defined by
\begin{equation*}
\pi(x,\ y,\ x):=((2x-y-z)/3,\ (-x+2y-z)/3,\ (-x-y+2z)/3).
\end{equation*}
Then, $\pi$ induces a projection of $Tri(\mathbf{R}^3)$ on $H$, also denoted by $\pi$, i.e.,
\begin{equation*}
\pi:\  Tri(\mathbf{R}^3) \rightarrow H,\quad  \pi(PQR):=\pi(P)\pi(Q)\pi(R),
\end{equation*}
where $\pi(P)\pi(Q)\pi(R)$ is the triangle defined by three vertices $\pi(P)$, $\pi(Q)$, and $\pi(R)$. By projecting slant triangles in $Tri(\mathbf{R}^3)$ on $H$, we obtain a triangular mesh $H_0$ on $H$, i.e.,
\begin{align*}
Tri(H_0)&:=\{\pi(PQR) \subset H \ | \ PQR \in Tri(\mathbf{R}^3) \}, \\
Edge(H_0)&:= \{ ab \subset H\  |\  \exists c \in Ver(H_0) \text{ such that } abc \in Tri(H_0) \},\\
Ver(H_0)&:=\{ \pi(v) \in H \ |\ v \in \mathbf{Z}^3 \}.
\end{align*}
The elements of $Tri(H_0)$ are called \textit{flat} triangles. In the following, \underline{we identify} \underline{$H_0$ with $M_0$}.
\end{definition}

\begin{remark}
Points of $\mathbf{R}^3$ are denoted by uppercase letters, whereas vertices of meshes are denoted by lowercase letters.
\end{remark}

Given $abc \in Tri(H_0)$. The ``tangent space’’ $T(abc)$ on $abc$ is defined by identifying slant triangles in $\pi^{-1}(abc)$ with the same slope.

\begin{definition}[$\sim_T$]
Given $P_1P_2P_3,\ Q_1Q_2Q_3 \in \pi^{-1}(abc)$. An equivelence relation $\sim_T$ on $ \pi^{-1}(abc)$ is defined by
\begin{equation*}
P_1P_2P_3 \sim_T Q_1Q_2Q_3 \text{ if and only if they have the same slope},
\end{equation*}
i.e., $\exists k \in \mathbf{Z}$ such that  
\begin{equation*}
\{Q_1, Q_2, Q_3\}=\{P_1+(k,\ k,\ k), P_2+(k,\ k,\ k), P_3+(k,\ k,\ k)\} \subset \mathbf{Z}^3.
\end{equation*}
\end{definition}

\begin{definition}[$T(abc)$]\label{def_T}
Given $abc \in Tri(H_0)$. \textit{Tangent space $T(abc)$ on $abc$} is the quotient set of $\pi^{-1}(abc)$ by $\sim_T$, i.e,
\begin{equation*}
T(abc):= \pi^{-1}(abc)/\sim_T,
\end{equation*}
(Figure \ref{figure5} (b) and (c)). Projection $\pi_T:\ T(abc) \rightarrow H_0$ is defined by
\begin{equation*}
\pi_T(abc \bmod {\sim}_T):=\pi(abc).
\end{equation*}
\end{definition}

\begin{definition} [$T(M)$ and $\Gamma(M)$]
Given $M \subset H_0$. \textit{Tangent bundle $T(M)$ on $M$} is the collection of all tangent spaces $T(abc)$ on $M$, i.e.,
\begin{equation*}
T(M):=\{T(abc)\ |\  abc \in Tri(M)\}.
\end{equation*}
A mapping $\sigma:\ M \rightarrow T(M)$ is called a \textit{section of $T(M)$ on $M$} if
\begin{equation*} 
\pi_T(\sigma(abc))=abc \quad \text{for } \forall abc \in Tri(M).
\end{equation*}
$\Gamma(M)$ denotes the collection of all sections of $T(M)$ on $M$, i.e.,
\begin{equation*} 
\Gamma(M):=\{\sigma \in MAP(M,T(M)) \ | \ \sigma \text{ is a section of $T(M)$ on $M$} \},
\end{equation*}
where $MAP(M,T(M))$ denotes the collection of all mappings from $M$ to $T(M)$. 
\end{definition}

\paragraph{3.2.4.2 \textit{Vector Fields on $M$ defined by Sections of $T(M)$}}\label{para3241}  
Given $M \subset H_0$. Elements of $C^1(M)$ (i.e., vector fields on $M$) are defined by ``edge-continuous'' sections of $T(M)$. ``Edge-continuous'' sections are defined using ``edge vectors'' $e_s(ab)$, $e_s(bc)$, and $e_s(ca)$.

\begin{definition}[$e_s(ab)$, $e_s(bc)$, and $e_s(ca)$]\label{def_e_vec}
Given $M \subset H_0$, $abc \in Tri(M)$, and $s \in T(abc)$. By definition, $\exists PQR \in \pi^{-1}(abc)$ such that 
\begin{equation*}
s = PQR \bmod {\sim}_T,
\end{equation*}
where $\pi(P)=a$, $\pi(Q)=b$, and $\pi(R)=c$. The \textit{edge vectors} $e_s(ab)$, $e_s(bc)$, and $e_s(ca)$ of $s$ are defined by
\begin{align*}
e_s(ab)&:= Q-P \in \mathbf{Z}^3, \\
e_s(bc)&:=R-Q \in \mathbf{Z}^3, \\
e_s(ca)&:=P-R \in \mathbf{Z}^3.
\end{align*}
Note that these values do not depend on the choice of $PQR  \in \pi^{-1}(abc)$.
\end{definition}

\begin{definition} [Edge-continuous sections] \label{def_edge_cont}
Given $M \subset H_0$ and $\sigma \in \Gamma (M)$, $\sigma$ is called \textit{edge-continuous} if 
\begin{equation*}
 e_{\sigma(abc)}(ab)= e_{\sigma(abd)}(ab)
\end{equation*}
for any pair $abc$, $abd \in Tri(M)$ sharing an edge $ab$.
\end{definition}

\begin{lemma} [Vector field $\beta_{\sigma}$]\label{lemma_A}
Given $M \subset H_0$ and edge-continuous $\sigma \in \Gamma(M)$. Then, a vector field $\beta_{\sigma} \in C^1(M)$ is obtained by
\begin{equation*}
\beta_{\sigma}(ab):= \Vert  e_{\sigma(abc)}(ab) \Vert \quad (ab \in C_1(M))),
\end{equation*}
where $abc \in Tri(M)$ is any triangle containing $ab$ as a side, and
\begin{equation*}
\Vert (x,y,z) \Vert :=x+y+z. 
\end{equation*}
 Since $\sigma$ is edge-continuous, the value dose not depend on the choice of $abc$. $\beta_{\sigma}$ is called the \textit{vector field defined by $\sigma$}.
\end{lemma}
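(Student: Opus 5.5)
The claim to establish is that $\beta_{\sigma}$ is well defined and is an element of $C^1(M)=Hom(C_1(M),\mathbf{Z})$. This has three parts: the value on an edge is independent of the chosen triangle, it is an integer, and it is antisymmetric under reversing the edge. I will verify the value at the level of edge vectors and then extend by linearity to formal sums.

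\emph{Step 1: the edge vectors are independent of the representative.} Fix $abc \in Tri(M)$ and $s=\sigma(abc)\in T(abc)$. By Definition \ref{def_T}, two representatives $PQR$ and $P'Q'R'$ of $s$ differ by a translation $(k,k,k)$ with $k\in\mathbf{Z}$. Matching vertices through $\pi$ gives $P'=P+(k,k,k)$ and $Q'=Q+(k,k,k)$, because $\pi$ is injective on each slant triangle, since it maps it onto a nondegenerate flat triangle. Hence $Q'-P'=Q-P$, which justifies the note after Definition \ref{def_e_vec}. Since $P,Q\in\mathbf{Z}^3$, the vector $e_s(ab)$ lies in $\mathbf{Z}^3$, so $\Vert e_s(ab)\Vert\in\mathbf{Z}$.

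\emph{Step 2: independence of the adjacent triangle.} In $M\subset H_0$ an edge $ab$ belongs to at most two triangles, $abc$ and $abd$. Definition \ref{def_edge_cont} gives $e_{\sigma(abc)}(ab)=e_{\sigma(abd)}(ab)$ directly, so $\beta_\sigma(ab)$ does not depend on the choice of triangle. If $ab$ lies in only one triangle of $M$, there is nothing to check.

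\emph{Step 3: orientation and linearity.} The only subtle point is compatibility with $ba=-ab$. Using the same representative $PQR$, the edge vector of $ba$ read in the triangle is $P-Q=-(Q-P)$, and $\Vert\cdot\Vert$ is linear, so $\beta_\sigma(ba)=-\beta_\sigma(ab)$. I would make this precise by defining $e_s$ on every ordered pair of vertices of $abc$, setting $e_s(xy)=Y-X$ where $X,Y$ are the lifts of $x,y$ in $PQR$. This extends the three cyclically ordered edges of Definition \ref{def_e_vec} to all orientations. Because of this antisymmetry, the value assigned to an oriented edge is well defined on the free abelian group generated by edges modulo $ba=-ab$, that is, on $C_1(M)$. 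Finally, set $\beta_\sigma(\sum n_ie_i)=\sum n_i\beta_\sigma(e_i)$, which defines a homomorphism $C_1(M)\to\mathbf{Z}$, so $\beta_\sigma\in C^1(M)$.

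\emph{Main obstacle.} I expect the only real difficulty to be the orientation bookkeeping in Step 3. Definition \ref{def_e_vec} lists only the cyclic edges $ab,bc,ca$ of $abc$, while the adjacent triangle $abd$ traverses the shared edge as $ba$. One must therefore confirm that the edge-continuity condition compares the same orientation in both triangles, and it does once $e_s$ is defined on all ordered pairs as above.
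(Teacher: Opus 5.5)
Your proposal is correct and takes the same route as the paper, whose proof is only the remark that the lemma ``follows immediately from the definitions.'' You make explicit the checks the paper leaves implicit: independence of the representative modulo $(k,k,k)$, independence of the adjacent triangle via edge-continuity, and antisymmetry under $ba=-ab$ together with linear extension to $C_1(M)$.
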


\begin{proof}
It follows immediately from the definitions.
\end{proof}

\begin{example}
In Figure \ref{figure5}, $abc=\pi(ABO)$, where 
\begin{equation*}
A=(l+1,\ m,\ n),\ B=(l+1,\ m+1,\ n), \text{ and } O=(l,\ m,\ n).
\end{equation*}
Suppose $\sigma(abc)=ABO$, where $a=\pi(A)$, $b=\pi(B)$, and $c=\pi(O)$.  Then,
\begin{align*}
&\beta_{\sigma}(ab)= \Vert  e_{\sigma(abc)}(ab) \Vert =\Vert  B-A \Vert = \Vert  (0,\ 1,\ 0) \Vert =1, \\
&\beta_{\sigma}(bc)= \Vert  e_{\sigma(abc)}(bc) \Vert =\Vert  O-B \Vert = \Vert  (-1,\ -1,\ 0) \Vert =-2, \\
&\beta_{\sigma}(ca)= \Vert  e_{\sigma(abc)}(ca) \Vert =\Vert  A-O \Vert = \Vert  (1,\ 0,\ 0) \Vert =1.
\end{align*}
Note that $\beta_{\sigma}(ba)=-1$, $\beta_{\sigma}(cb)=2$, and $\beta_{\sigma}(ac)=-1$
\end{example}

\paragraph{3.2.4.3 \textit{Flows on $M$ defined by Vector Fields on $M$}}\label{para3243} 
Given $M \subset H_0$. Flows on $M$ are defined by ``consistent'' vector fields on $M$. 
\begin{remark}
We denotes the absolute value of $m \in \mathbf{Z}$ by $|m|$. For example, $|-3|=3$.
\end{remark}

\begin{definition} [Consistent vector fields]
Given $M \subset H_0$ and $\beta \in C^1(M)$. $\beta$ is called \textit{consistent} if 
\begin{equation*}
|\beta(ab)| \geq |\beta(bd)|,\   |\beta(da)| \quad \text{if } |\beta(ab)| \geq |\beta(bc)|,\  |\beta(ca)| 
\end{equation*}
for any pair $abc, abd \in Tri(S)$ sharing edge $ab$.
\end{definition}

\begin{lemma}[Flow $\Psi_{\beta}$]\label{lemma_B}
Given $M \subset H_0$, consistent $\beta \in C^1(M)$, and $abc \in Tri(M)$. Define the normal edges of $abc$ by
\begin{equation*}
ab \text{ is a normal edge of } abc \text{ if } |\beta(ab)| \geq  |\beta(bc)|,\  |\beta(ca)|.
\end{equation*}
Then, a flow $\Psi_{\beta} \in FLOW(M)$ is obtained by connecting triangles to adjacent triangles
\begin{equation*}
\text{through common edges that are not normal edges}. 
\end{equation*}
$\Psi_{\beta}$ is called the \textit{flow on $M$ defined by $\beta$}.
\end{lemma}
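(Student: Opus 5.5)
The plan is to show that the rule ``connect through common edges that are not normal edges'' gives a well-defined symmetric adjacency relation on $Tri(M)$, and then to take $\Psi_{\beta}$ to be the collection of connected components of the resulting graph. The flow axioms will then follow directly from the fact that connected components partition a set.

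\textbf{Step 1: symmetry of the connection rule.} Two triangles $abc, abd \in Tri(M)$ share the edge $ab$. Whether $ab$ is normal is decided separately inside $abc$ and inside $abd$, so the rule is only meaningful if these two decisions agree. The consistency of $\beta$ gives this directly. If $ab$ is normal in $abc$, then $|\beta(ab)| \geq |\beta(bc)|, |\beta(ca)|$. Consistency then yields $|\beta(ab)| \geq |\beta(bd)|, |\beta(da)|$, so $ab$ is normal in $abd$. The same argument with the roles of $c$ and $d$ exchanged gives the converse. Hence $ab$ is normal in one triangle if and only if it is normal in the other, and ``$abc$ is connected to $abd$'' is a symmetric relation. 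An edge of $Edge(M)$ that lies on only one triangle of $M$ produces no connection at all.

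\textbf{Step 2: the graph and its components.} Let $G$ be the graph with vertex set $Tri(M)$. Put an edge between $abc$ and $abd$ exactly when $ab$ is a non-normal edge of both triangles. Each connected component of $G$ is an arrangement of triangles joined through common edges, so it is a trajectory on $M$ in the sense of Definition \ref{def_traj}. Branches are allowed there, and isolated triangles are allowed in a flow. Let $\Psi_{\beta}$ be the set of these components. Distinct components share no triangles, and every $abc \in Tri(M)$ lies in exactly one component. These are precisely conditions (1) and (2) in the definition of $FLOW(M)$, so $\Psi_{\beta} \in FLOW(M)$.

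\textbf{Step 3: normal edges match the flow.} It remains to check that the normal edges of $abc$ as a triangle of the trajectory in $\Psi_{\beta}$ (the edges not used in connections) coincide with the edges declared normal by $\beta$. This is the step I expect to be the only real subtlety. A non-normal edge $ab$ of $abc$ whose neighbour $abd$ is not in $M$ is not used in any connection, so it is normal in the trajectory sense. I would therefore read ``normal edges of $abc$'' in the lemma as those with $|\beta(ab)|$ maximal, together with the boundary edges of $M$. With that reading the two notions agree.

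Ties in $|\beta|$ can make more than one edge of a triangle normal. This only produces terminal or isolated triangles, which are permitted in a flow, so no uniqueness of the maximum is needed.
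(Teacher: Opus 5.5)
Your proposal is correct and follows the paper's argument. The paper's proof is just ``it follows immediately from the definitions,'' and its accompanying remark records exactly your Step 1: consistency of $\beta$ forces a shared edge to be normal in both triangles or in neither. Taking connected components as in your Step 2 then gives $\Psi_{\beta} \in FLOW(M)$. Your Step 3 is not needed for that membership, though your observation that boundary edges of $M$ are never used in a connection is accurate.
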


\begin{proof}
It follows immediately from the definitions.
\end{proof}

\begin{remark}
Given $abc,\ abd \in Tri(M)$. Since $\beta \in C^1(M)$ is consistent, $ab$ is a normal edge of $abc$ if and only if $ab$ is a normal edge of $abd$.
\end{remark}

\subsubsection{Computation of Flows on $M$ by Tangent Cones}\label{sec325}\    

Given $M \subset H_0$. We define ``tangent cones'' to compute elements of $\Gamma(M)$, $C^1(M)$, and $FLOW(M)$.

\paragraph{3.2.5.1 \textit{Tangent Cones}}\label{para3251} 
Given $M \subset H_0$. Recall that tangent bundle $T(M)$ is defined using unit cubes (Figure \ref{figure5} (a)). By stacking unit cubes diagonally from $(\infty, \infty, \infty)$ to $(-\infty, -\infty, -\infty)$, we obtain a triangular pyramid of infinite height with multiple tops and no base. In this paper, \underline{``tangent cones” refer to the} \underline{``triangular pyramids” obtained in this way, i.e., by cube stacking}. 

\begin{definition} [$Cone\ A$]
Given $A \subset \mathbf{Z}^3$ such that $A$ consists of finite elements. \textit{Tangent cone} $Cone\ A$ is the triangular pyramid generated by $A$ in $\mathbf{R}^3$, i.e.,
\begin{multline*}
Cone\ A:=\{(x, y, z) \in \mathbf{R}^3\ |\ \exists (u,v,w) \in A \\
\text{such that } x-u,\ y-v ,\ z-w >0 \} \subset \mathbf{R}^3.
\end{multline*}
Tangent cones may have multiple tops but no base. $tops(Cone\ A) \subset \mathbf{Z}^3$ denotes the tops of $Cone\ A$. ${\partial}Cone\ A \subset \mathbf{R}^3$ denotes the collection of all points on the surfaces of $Cone\ A$ i.e, 
\begin{multline*}
{\partial}Cone\ A :=\{ (x, y, z) \in \mathbf{R}^3\  |\ \\
max_{(u,v,w) \in A}\{min\{x-u,\ y-v,\ z-w\} \}=0 \}  \subset \mathbf{R}^3.
\end{multline*}
$CONE(H_0)$ denotes the collection of all tangent cones in $\mathbf{R}^3$, i.e.,
\begin{equation*}
CONE(H_0):=\{ Cone\ A\ |\ A \subset \mathbf{Z}^3, \text{ $A$ consists of finite elements}. \}.
\end{equation*}
\end{definition}

\begin{definition} [$v_{{\partial}Cone\ A}(x)$]\label{def_v_partial}
Given $Cone\ A \in CONE(H_0)$. A one-to-one mapping
\begin{equation*}
v_{{\partial}Cone\ A} :\ H_0 \rightarrow {\partial}Cone\ A
\end{equation*}
is uniquely defined by the equation
\begin{equation*}
{\partial}Cone\ A \cap \pi^{-1}(x) = \{ v_{{\partial}Cone\ A}(x) \}.
\end{equation*}
In particular, ${\pi}(v_{{\partial}Cone\ A}(x))=x$. $v_{{\partial}Cone\ A}(x)$ is called the \textit{surface map} of $Cone\ A$. In the following, \underline{we often identify $Cone\ A$ with $v_{{\partial}Cone\ A}(x)$}.
\end{definition}

\paragraph{3.2.5.2 \textit{Computation of Sections of $T(M)$}}\label{para3252} 
Given $M \subset H_0$. Elements of $\Gamma(M)$ are computed using tangent cones.

\begin{lemma}[Section $\sigma_{Cone\ A}$]\label{lemma_C}
Given $Cone\ A \in CONE(H_0)$. A section $\sigma_{Cone\ A} \in \Gamma(H_0)$ is obtained by
\begin{equation*}
\sigma_{Cone\ A}(abc) :=PQR\ \bmod \sim_T,
\end{equation*}
where
\begin{align*}
P&=v_{{\partial}Cone\ A}(a) \in \pi^{-1}(a),   \\
Q&=v_{{\partial}Cone\ A} (b) \in \pi^{-1}(b),  \\
R&=v_{{\partial}Cone\ A} (c) \in \pi^{-1}(c).
\end{align*}
$\sigma_{Cone\ A}$ is called the \textit{section of $T(H_0)$ defined by $Cone\ A$}. 
\end{lemma}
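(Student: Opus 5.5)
To prove Lemma \ref{lemma_C} I will check that $\sigma_{Cone\ A}$ is well defined and is a section, i.e. that for each $abc \in Tri(H_0)$ the points $P,Q,R$ span a slant triangle in $\pi^{-1}(abc)$, and that $\pi_T$ of its class is $abc$. The second part is immediate: by Definition \ref{def_v_partial}, $\pi(v_{\partial Cone\ A}(x))=x$, so $\pi(PQR)=abc$. The real content is therefore that $PQR\in Tri(\mathbf{R}^3)$, i.e. that $P,Q,R$ are lattice points forming one of $OAB,\ldots,OFA$ for some unit cube.

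First I will describe the surface map at vertices. For $x\in Ver(H_0)$, the fibre $\pi^{-1}(x)$ is a line $\{V+t(1,1,1)\}$ through a lattice point $V$. Write $f(t)=\max_{(u,v,w)\in A}\min\{V_1+t-u,\ V_2+t-v,\ V_3+t-w\}$. This function is strictly increasing in $t$ with slope $1$, so it has a unique zero, which gives existence and uniqueness of $v_{\partial Cone\ A}(x)$. Because $V$ and $A$ are integral, the zero $t$ is an integer, so $v_{\partial Cone\ A}(x)\in\mathbf{Z}^3$.

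Next comes the local step, which I expect to be the main obstacle: showing that for adjacent vertices $a,b$ the lifts differ by a vector whose coordinates are all in $\{0,1\}$ (or all in $\{0,-1\}$) with exactly one or two nonzero entries. Concretely, $Q-P\in\{\pm e_i,\ \pm(e_i+e_j)\}$.

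My approach is to set $h(X)=\max_{A}\min_k(X_k-u_k)$. This function is $1$-Lipschitz for the sup-norm, monotone in each coordinate, and satisfies $h(X+t(1,1,1))=h(X)+t$. Take a lattice representative $Q'$ of $b$ with $Q'-P=e_i$ (edge directions in $H_0$ lift to $\pm e_i$ up to shifts by $(1,1,1)$). Then $h(Q')\in\{0,1\}$ by monotonicity and Lipschitz continuity, so $Q=Q'-h(Q')(1,1,1)$ gives $Q-P\in\{e_i,\ e_i-(1,1,1)\}$, and the latter equals $-(e_j+e_k)$. This is the required form.

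Finally, I will do a triangle-level check. For a triangle $abc$, the three lifted edge vectors sum to zero and each lies in $\{\pm e_i,\pm(e_i+e_j)\}$. A short case analysis, using the two orientations of triangles in $H_0$, then shows that $\{P,Q,R\}$ is $\{O,\,O+e_i,\,O+e_i+e_j\}$ for some lattice point $O$ and some ordering of the indices, up to relabelling of vertices. These are exactly the slant triangles $OAB,\dots,OFA$ of some unit cube, so $PQR\in\pi^{-1}(abc)$ and its class modulo $\sim_T$ lies in $T(abc)$.
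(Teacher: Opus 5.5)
Your proof is correct, and it does much more than the paper. The paper's proof is only the sentence that the lemma follows immediately from the definitions. The intended reasoning appears only in the cube-stacking picture, which the next corollary invokes with ``$PQR$ is on a surface of $Cone\ A$.'' In that picture, the boundary of a finite stack of unit cubes is tiled by unit lattice squares. Each square is the union of two slant triangles sharing the diagonal from $O$ to $O+e_i+e_j$. So the surface points above the three vertices of a flat triangle automatically span a slant triangle.

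You replace this picture with an analytic argument built on $h(X)=\max_{A}\min_k(X_k-u_k)$:
\begin{itemize}
\item Equivariance of $h$ under translation by $(1,1,1)$ gives existence and uniqueness of the surface map, which the paper only asserts when defining $v_{\partial Cone\ A}$.
\item Integrality of $h$ on $\mathbf{Z}^3$ puts the lifts of vertices in $\mathbf{Z}^3$.
\item Monotonicity together with the sup-norm Lipschitz bound $h(P+e_i)\le h(P)+1$ forces each lifted edge vector to be $\pm e_i$ or $\pm(e_i+e_j)$.
\end{itemize}
Your route gives a self-contained proof. The geometric route is shorter, but it presupposes that $\partial Cone\ A$ is a stepped surface of lattice squares on which $\pi$ is a bijection, and your argument actually proves these facts.

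Two small points would tighten your write-up. First, you need $A\neq\emptyset$; otherwise $h\equiv-\infty$ and no surface map exists, which the paper also leaves implicit. Second, the ``short case analysis'' can be made uniform:
\begin{itemize}
\item Traverse $abc$ so that its edge vectors are $\pi(e_{\sigma(1)}),\pi(e_{\sigma(2)}),\pi(e_{\sigma(3)})$ for some permutation $\sigma$. This is always possible, since these are the only zero-sum triples of edge directions up to sign.
\item The lifted edges are then $e_{\sigma(k)}-x_k(1,1,1)$ with $x_k\in\{0,1\}$.
\item Their sum is $(1-\sum_k x_k)(1,1,1)$, so closure around the triangle forces exactly one $x_k=1$.
\item Each of the three choices gives a set $\{O,\,O+e_i,\,O+e_i+e_j\}$ with $i\neq j$, and these are precisely the six slant triangles $OAB,\ldots,OFA$.
\end{itemize}
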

\begin{proof}
It follows immediately from the definitions.
\end{proof}

\begin{corollary}\label{cor_C2}
Given $Cone\ A \in CONE(H_0)$. Then, ${\sigma}_{Cone\ A}$ is edge-continuous.
\end{corollary}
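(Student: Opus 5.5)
The plan is to unwind Definition \ref{def_edge_cont} directly, using the fact that $\sigma_{Cone\ A}$ is built from the single surface map $v_{{\partial}Cone\ A}$, which depends only on vertices. Take two triangles $abc, abd \in Tri(H_0)$ sharing the edge $ab$. By Lemma \ref{lemma_C},
\begin{equation*}
\sigma_{Cone\ A}(abc)=PQR \bmod \sim_T, \qquad \sigma_{Cone\ A}(abd)=P'Q'R' \bmod \sim_T,
\end{equation*}
where $P=v_{{\partial}Cone\ A}(a)=P'$, $Q=v_{{\partial}Cone\ A}(b)=Q'$, $R=v_{{\partial}Cone\ A}(c)$, and $R'=v_{{\partial}Cone\ A}(d)$. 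The lifts of the shared vertices $a$ and $b$ are literally the same points of $\mathbf{R}^3$, because $v_{{\partial}Cone\ A}$ is a function on $H_0$ (Definition \ref{def_v_partial}).

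Next, I will compute the edge vectors via Definition \ref{def_e_vec}, using the representatives $PQR$ and $PQR'$. This gives
\begin{equation*}
e_{\sigma(abc)}(ab)=Q-P=e_{\sigma(abd)}(ab).
\end{equation*}
Definition \ref{def_e_vec} already states that edge vectors do not depend on the chosen representative. Indeed, shifting all three vertices by $(k,k,k)$ leaves the differences unchanged, so the equality holds for the classes $\bmod\ \sim_T$. This is exactly edge-continuity.

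The one real point to verify, and the only obstacle, is that Lemma \ref{lemma_C} is well-posed. I must check that $PQR$ actually lies in $\pi^{-1}(abc)$, i.e.\ that the three surface points $v_{{\partial}Cone\ A}(a)$, $v_{{\partial}Cone\ A}(b)$, $v_{{\partial}Cone\ A}(c)$ are vertices of a slant triangle in $Tri(\mathbf{R}^3)$.

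\begin{itemize}
\item First, I note that ${\partial}Cone\ A$ is a union of faces of unit cubes with integer vertices, since $A \subset \mathbf{Z}^3$ and the defining $\max\min$ expression is piecewise linear with integer breakpoints. Hence $v_{{\partial}Cone\ A}$ maps $Ver(H_0)$ into $\mathbf{Z}^3$.
\item Second, I argue that for adjacent vertices $a,b$ the lifts differ by a vector $e$ with $\pi(e)=b-a$ and $\Vert e\Vert \in \{1,-2\}$ up to sign, as in the example after Lemma \ref{lemma_A}. This follows because the function $\max_{u \in A}\min_i(x_i-u_i)$ changes by at most one unit step along a lattice direction.
\item Third, taking this granted as the cube-stacking picture of Figure \ref{figure5}, each triangle on the surface over $abc$ is a face-half of a unit cube, namely one of $OAB,\dots,OFA$.
\end{itemize}

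Once this is established, the edge-continuity argument above is immediate.
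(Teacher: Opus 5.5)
Your proof is correct and is the paper's argument made explicit. Both $\sigma_{Cone\ A}(abc)$ and $\sigma_{Cone\ A}(abd)$ take their lifts of $a$ and $b$ from the single surface map $v_{{\partial}Cone\ A}$, so both edge vectors equal $v_{{\partial}Cone\ A}(b)-v_{{\partial}Cone\ A}(a)$; this is all that the paper's one-line remark ``$PQR$ is on a surface of $Cone\ A$'' conveys. Your well-posedness check belongs to Lemma~\ref{lemma_C} rather than to this corollary, and your third bullet need not rest on the figure: traverse $abc$ in the orientation whose steps project to $\pi(1,0,0)$, $\pi(0,1,0)$, $\pi(0,0,1)$; your second bullet then gives lifted $\Vert\cdot\Vert$-values in $\{1,-2\}$, and since the three lifted steps sum to zero these values must be $1,1,-2$, which is exactly a slant triangle.
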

\begin{proof}
Note that $PQR$ is on a surface of $Cone\ A$. The result follows immediately.
\end{proof}

\paragraph{3.2.5.3 \textit{Computation of Vector Fields on $M$}}\label{para3253} 
Given $M \subset H_0$. Elements of $C^0(M)$ and $C^1(M)$ are computed using tangent cones.

\begin{lemma}[Potential function $\alpha_{Cone\ A}$]\label{lem_C3}
Given $Cone\ A \in CONE(H_0)$. A potential function $\alpha_{Cone\ A} \in C^0(H_0)$ is obtained by
\begin{equation*}
\alpha_{Cone\ A}(a):=\Vert v_{{\partial}Cone\ A}(a) \Vert \quad (a \in C_0(H_0)),
\end{equation*}
where $\Vert (x,y,z) \Vert :=x+y+z$. ${\alpha}_{Cone\ A}$ is called the \textit{potential function on $H_0$ defined by $Cone\ A$}. 
\end{lemma}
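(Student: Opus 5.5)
The statement to be checked is that $\alpha_{Cone\ A}$ is a well-defined element of $C^0(H_0)=Hom(C_0(H_0),\mathbf{Z})$. This requires two things: its values on vertices are well defined and lie in $\mathbf{Z}$, and it extends linearly to formal sums.

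\textbf{Step 1 (the surface map is well defined).} Fix a vertex $a \in Ver(H_0)$. The fibre $\pi^{-1}(a)$ is the line $\{P+t(1,1,1)\ |\ t\in\mathbf{R}\}$, where $P \in \mathbf{Z}^3$ is any lattice lift of $a$. Along this line, set
\begin{equation*}
f(t):=\max_{(u,v,w)\in A}\min\{P_1+t-u,\ P_2+t-v,\ P_3+t-w\}.
\end{equation*}
Each inner minimum equals $t$ plus a constant. Since $A$ is finite, $f(t)=t+c$ with
\begin{equation*}
c=\max_{(u,v,w)\in A}\min\{P_1-u,\ P_2-v,\ P_3-w\}.
\end{equation*}
Hence $f(t)=0$ has exactly one solution, $t=-c$. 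This shows that ${\partial}Cone\ A\cap\pi^{-1}(a)$ is a single point, so Definition \ref{def_v_partial} indeed defines $v_{{\partial}Cone\ A}(a)$. The only thing used is that $A$ is finite and nonempty, which I take as implicit in the definition.

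\textbf{Step 2 (integrality).} Since $P$ and every element of $A$ are in $\mathbf{Z}^3$, the constant $c$ is an integer. Therefore
\begin{equation*}
v_{{\partial}Cone\ A}(a)=P-c(1,1,1)\in\mathbf{Z}^3,
\end{equation*}
and so $\Vert v_{{\partial}Cone\ A}(a)\Vert=\Vert P\Vert-3c\in\mathbf{Z}$.

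\textbf{Step 3 (independence of the lift).} Replacing $P$ by another lift $P+k(1,1,1)$ changes $c$ to $c+k$. The point $P-c(1,1,1)$ is unchanged, so the value at $a$ depends only on $a$.

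\textbf{Step 4 (linear extension).} Because $C_0(H_0)$ is the free abelian group on $Ver(H_0)$, the assignment on vertices extends uniquely by
\begin{equation*}
\alpha_{Cone\ A}\Big(\sum n_iv_i\Big)=\sum n_i\,\alpha_{Cone\ A}(v_i).
\end{equation*}
This gives an element of $Hom(C_0(H_0),\mathbf{Z})=C^0(H_0)$, which is the claim.

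The main obstacle is Step 1. The paper's definition of ${\partial}Cone\ A$ and its identification of $H_0$ with lattice points leave some ambiguity. Once the fibre is written explicitly as a line in the direction $(1,1,1)$, the piecewise-linear function collapses to $t+c$, and everything else follows routinely.
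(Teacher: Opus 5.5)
Your proposal is correct and takes essentially the same approach as the paper, whose entire proof is that the lemma ``follows immediately from the definitions.'' You simply unpack those definitions: each fibre $\pi^{-1}(a)$ meets ${\partial}Cone\ A$ in exactly one point of $\mathbf{Z}^3$ (using that $A$ is finite and nonempty), so the value at each vertex is a well-defined integer, and it then extends linearly over $C_0(H_0)$.
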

\begin{proof}
It follows immediately from the definitions.
\end{proof}

\begin{lemma}[Vectot field $\beta_{Cone\ A}$]\label{lemma_D}
Given $Cone\ A \in CONE(H_0)$. A vector field $\beta_{Cone\ A} \in C^1(H_0)$ is obtained by
\begin{equation*}
\beta_{Cone\ A}(ab):=\beta_{\sigma_{Cone\ A} }(ab)= \Vert e_{\sigma_{Cone\ A}(abc)}(ab) \Vert \quad (ab \in C_1(H_0)),
\end{equation*}
where $abc \in Tri(H_0)$ is any triangle containing $ab$ as a side (Lemma \ref{lemma_A}). ${\beta}_{Cone\ A}$ is called the \textit{vector field on $H_0$ defined by $Cone\ A$}. 
\end{lemma}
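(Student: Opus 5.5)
The plan is to reduce the statement to Lemma \ref{lemma_A}: the composite $\beta_{Cone\ A}=\beta_{\sigma_{Cone\ A}}$ is well defined as soon as $\sigma_{Cone\ A}$ is a section of $T(H_0)$ and is edge-continuous. Both facts are already available. Lemma \ref{lemma_C} gives $\sigma_{Cone\ A}\in\Gamma(H_0)$, and Corollary \ref{cor_C2} gives edge-continuity. So the only work is to check that the hypotheses match, that $\beta_{Cone\ A}$ is independent of the auxiliary triangle, and that it is antisymmetric under orientation reversal, hence a genuine element of $C^1(H_0)$.

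First, fix an oriented edge $ab\in Edge(H_0)$ and a triangle $abc\in Tri(H_0)$ containing it. With $P=v_{{\partial}Cone\ A}(a)$, $Q=v_{{\partial}Cone\ A}(b)$ and $R=v_{{\partial}Cone\ A}(c)$, Definition \ref{def_e_vec} gives $e_{\sigma_{Cone\ A}(abc)}(ab)=Q-P$. This is independent of the representative modulo $\sim_T$, since shifting by $(k,k,k)$ cancels in the difference.

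The key observation is that $Q-P$ depends only on $a$ and $b$, because the surface map $v_{{\partial}Cone\ A}$ of Definition \ref{def_v_partial} is a function of the vertex alone. For a second triangle $abd$ we therefore get the same vector, which is exactly the content of Corollary \ref{cor_C2}. Consequently
\begin{equation*}
\beta_{Cone\ A}(ab)=\Vert Q-P\Vert=\alpha_{Cone\ A}(b)-\alpha_{Cone\ A}(a),
\end{equation*}
using the linearity of $\Vert\cdot\Vert$ and Lemma \ref{lem_C3}.

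This formula gives the remaining properties directly. Antisymmetry $\beta_{Cone\ A}(ba)=-\beta_{Cone\ A}(ab)$ holds because the expression is a difference. Integer values follow because $v_{{\partial}Cone\ A}(a)$ lies in $\mathbf{Z}^3$ for every $a\in Ver(H_0)$. We then extend $\mathbf{Z}$-linearly to $C_1(H_0)$. A side remark: this shows $\beta_{Cone\ A}=\delta\alpha_{Cone\ A}$ is exact, which is presumably why the paper introduces it.

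The main obstacle I expect is the integrality claim, namely that $v_{{\partial}Cone\ A}(a)$ is a lattice point whenever $a$ is a vertex. For $a=\pi(X)$ with $X\in\mathbf{Z}^3$, the fiber $\pi^{-1}(a)$ is the line $X+t(1,1,1)$. The point of this line on $\partial Cone\ A$ is given by
\begin{equation*}
t=-\max_{(u,v,w)\in A}\min\{x-u,\ y-v,\ z-w\},
\end{equation*}
which is an integer since $A\subset\mathbf{Z}^3$. Uniqueness of this point is the monotonicity of $t\mapsto\max\min$, which is strictly increasing with slope $1$. This confirms that $v_{{\partial}Cone\ A}(a)\in\mathbf{Z}^3$, and the rest is immediate.
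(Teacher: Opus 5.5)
Your proposal is correct and takes the paper's route: the paper's proof only cites Corollary \ref{cor_C2} for edge-continuity of $\sigma_{Cone\ A}$ and then applies Lemma \ref{lemma_A}, which is your first paragraph. Your extra checks are sound but go beyond what the paper writes. These are the triangle-independence and antisymmetry via $\beta_{Cone\ A}(ab)=\alpha_{Cone\ A}(b)-\alpha_{Cone\ A}(a)$ (the content of the paper's later Lemma \ref{lem_b_exact}), and the integrality of $v_{\partial Cone\ A}$ at vertices, which is not really an obstacle since Lemma \ref{lemma_C} already makes $\sigma_{Cone\ A}(abc)$ a class of lattice slant triangles.
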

\begin{proof}
By Corollary \ref{cor_C2},  ${\sigma}_{Cone\ A}$ gives an edge-continuous section on $H_0$. By Lemma \ref{lemma_A}, a vector field is defined by an edge-continuous section.
\end{proof}
\begin{remark}
For simplicity, we write ${\beta}_{Cone\ A}$ instead of ${\beta}_{{\sigma}_{Cone\ A}(abc)}$.
\end{remark}

\begin{corollary}\label{cor_D2}
Given $Cone\ A \in CONE(H_0)$. Then, $\beta_{Cone\ A}$ is consistent.
\end{corollary}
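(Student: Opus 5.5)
The plan is to show that, for $\beta=\beta_{Cone\ A}$, the condition ``$|\beta(ab)|\geq|\beta(bc)|,\ |\beta(ca)|$'' depends only on the edge $ab$ and not on the triangle containing it. Consistency then follows at once: if $ab$ is the dominant edge of $abc$, it is also the dominant edge of $abd$. The argument has two steps: first classify the possible values of $\beta$ on the three edges of a single triangle, then use edge-continuity (Corollary \ref{cor_C2}) to transfer the classification across the shared edge.

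\emph{Step 1 (local shape of the cone surface).} For each $abc\in Tri(H_0)$, I would show that $\sigma_{Cone\ A}(abc)$ is the class of a slant triangle in $Tri(\mathbf{R}^3)$. Equivalently, the three surface points $v_{{\partial}Cone\ A}(a)$, $v_{{\partial}Cone\ A}(b)$, $v_{{\partial}Cone\ A}(c)$ are three vertices of one unit-cube face, and they span half of that square. This is the step I expect to be the main obstacle, because the paper only describes the cone informally via ``cube stacking''. I would argue from the definition ${\partial}Cone\ A=\{\max_{(u,v,w)\in A}\min\{x-u,y-v,z-w\}=0\}$. Over each flat triangle, one generator $(u,v,w)\in A$ realizes the maximum, so locally the surface is a piece of the boundary of a single translated octant $(u,v,w)+\mathbf{R}^3_{\geq0}$, intersected with the boundaries of other such octants. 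Such a boundary is made of the coordinate-plane faces, and its lattice points are joined by unit coordinate steps. Since $Ver(H_0)=\pi(\mathbf{Z}^3)$, the lifts of the vertices of a flat triangle of $H_0$ are lattice points that differ by one of the patterns $O,A,B$ etc. listed in the definition of $Tri(\mathbf{R}^3)$. Where two octants meet (a concave corner of the stack), I would check directly that the lifts still lie on a common unit square, which is the usual three-cube picture.

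\emph{Step 2 (values of $\beta$ on one triangle).} I would compute the edge vectors for each of the six slant-triangle types $OAB,\dots,OFA$, as in the example after Lemma \ref{lemma_A}. In every case two edges are unit coordinate vectors, with $\Vert\cdot\Vert=\pm1$. The third edge is a face diagonal such as $\pm(1,1,0)$, with $\Vert\cdot\Vert=\pm2$. Hence in every triangle exactly one edge has $|\beta|=2$ and the other two have $|\beta|=1$. The dominant edge is therefore unique, and it is characterized intrinsically: $|\beta(ab)|=2$, equivalently $e_{\sigma(abc)}(ab)$ is a face diagonal.

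\emph{Step 3 (transfer across the shared edge).} Let $abc,abd\in Tri(H_0)$ share the edge $ab$. By Corollary \ref{cor_C2}, $e_{\sigma(abc)}(ab)=e_{\sigma(abd)}(ab)$, so $|\beta(ab)|$ is the same number whether it is read in $abc$ or in $abd$. Suppose $|\beta(ab)|\geq|\beta(bc)|,|\beta(ca)|$. By Step 2 this forces $|\beta(ab)|=2$. Then, again by Step 2 applied to $abd$, the other two edges of $abd$ have $|\beta(bd)|=|\beta(da)|=1\leq 2$. This is exactly the consistency condition, which proves the corollary. It also yields the remark following Lemma \ref{lemma_B}, that normality of an edge is independent of the adjacent triangle.
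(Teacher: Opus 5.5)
Your proposal is correct and follows essentially the paper's route. The paper asserts ``from the definitions'' that $\beta_{Cone\ A}$ is nonzero on every edge, sums to zero around every triangle, and has the same maximum $m$ of $|\beta|$ on every triangle. It concludes that the normal edge is the unique edge with $|\beta|=m$ and is therefore shared by both adjacent triangles; this is exactly your Steps 2--3, with $m=2$ made explicit by your $(1,1,2)$ pattern.

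Your Step 1 supplies the justification the paper omits. The ``one generator per triangle / concave corner'' reasoning can be replaced by a cleaner argument. The closure of $Cone\ A$ is a union of closed unit lattice cubes, so ${\partial}Cone\ A$ is a stepped surface made of unit squares. Each such square projects onto a rhombus of two flat triangles whose short diagonal is the image of the square's diagonal through its minimal corner. Hence every flat triangle lifts to a slant triangle.
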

\begin{proof}
From the definitions, we have
\begin{enumerate}
\item \label{item:first} $\beta_{Cone\ A} (ab)\neq 0$ $\quad$ for $\forall ab \in Edge(H_0)$, 
\item \label{item:second} $\beta_{Cone\ A} (ab)+\beta_{Cone\ A} (bc)+\beta_{Cone\ A} (ca)=0$ $\quad$ for $\forall abc \in Tri(H_0)$,
\item \label{item:third} $\exists m \in \mathbf{Z}$ such that
\begin{equation*}
max\{|\beta_{Cone\ A} (ab)|,\ |\beta_{Cone\ A} (bc)|,\ |\beta_{Cone\ A} (ca)|\}=m 
\end{equation*}
for $\forall abc \in Tri(H_0)$.
\end{enumerate}
Given $ab \in Edge(H_0)$. Then, $ab$ is a normal edge if and only if 
\begin{equation*}
|\beta_{Cone\ A}(ab)|=m.
\end{equation*}
Because of (\ref{item:first}) and (\ref{item:second}), 
\begin{equation*}
\text{every triangle has only one normal edge}.
\end{equation*}
Given $abc$, $bcd \in Tri(H_0)$ such that $|\beta(bc)|=m$. Because of (\ref{item:third}), 
\begin{equation*}
\text{$bc$ is the normal edge of both $abc$ and $bcd$}. 
\end{equation*}
That is, $\beta_{Cone\ A}$ is consistent.
\end{proof}

\paragraph{3.2.5.4 \textit{Computation of Flows on $M$}}\label{para3254}   
Given $M \subset H_0$. Flows on $M$ are computed using tangent cones..

\begin{lemma}[Flow ${\Psi}_{Cone\ A}$]\label{lemma_E}
Given $Cone\ A \in CONE(H_0)$. Then, a flow $\Psi_{Cone\ A} \in FLOW(H_0)$ is obtained by
\begin{equation*}
\Psi_{Cone\ A}:=\Psi_{\beta_{Cone\ A}}
\end{equation*}
(Lemma \ref{lemma_B}). ${\Psi}_{Cone\ A}$ is regular on $H_0$. $\Psi_{Cone\ A}$ is called the \textit{flow on $H_0$ defined by $Cone\ A$}. 
\end{lemma}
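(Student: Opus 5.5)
The statement has two parts: $\Psi_{Cone\ A}$ is a flow on $H_0$, and it is regular. The first part needs little work. By Corollary \ref{cor_D2}, $\beta_{Cone\ A}$ is consistent, so Lemma \ref{lemma_B} applies directly and produces $\Psi_{\beta_{Cone\ A}} \in FLOW(H_0)$. Only regularity needs an argument. By definition, regularity means every triangle of every trajectory is regular, i.e.\ is connected to exactly two of its three neighbours. Equivalently, every triangle has exactly one normal edge.

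The plan is to extract this from the three facts (1)--(3) listed in the proof of Corollary \ref{cor_D2}.

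\textbf{Exactly one normal edge per triangle.} Fix $abc \in Tri(H_0)$. By Lemma \ref{lemma_C} we have $\sigma_{Cone\ A}(abc)=PQR \bmod \sim_T$ with $P,Q,R$ on $\partial Cone\ A$. Since the triangle is a projection of a slant triangle (a face of a unit cube, Figure \ref{figure5}(a)), the edge vectors $Q-P$, $R-Q$, $P-R$ are, up to sign, of the form $e_i$, $e_j$ and $-(e_i+e_j)$ with $i\neq j$. So the values of $\Vert\cdot\Vert$ are a permutation of $(1,1,-2)$ up to an overall sign. This gives:
\begin{itemize}
\item no edge value is $0$, which is fact (1);
\item the values sum to $0$, which is fact (2);
\item the maximal absolute value is $m=2$ and is attained exactly once.
\end{itemize}
Hence, under the definition in Lemma \ref{lemma_B}, the normal edge is the unique edge $ab$ with $|\beta(ab)|=2$.

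\textbf{Gluing.} By the Remark after Lemma \ref{lemma_B}, consistency guarantees that whether an edge is normal does not depend on which of its two adjacent triangles we use. So each triangle is glued across exactly its two non-normal edges, and its neighbours agree with this gluing. Every triangle is therefore regular: no branch, terminal or isolated triangles occur.

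\textbf{Trajectories are loops or infinite chains.} Connected components of a graph in which every vertex has degree two are either closed cycles or bi-infinite paths. Both are trajectories without branches, terminals or isolated triangles, so $\Psi_{Cone\ A}$ is regular on $H_0$.

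\textbf{Main obstacle.} The delicate step is justifying that the lifted triangle $PQR$ really is a slant triangle of the cube lattice, so that its edge values are exactly $\pm(1,1,-2)$. Near the ridges and tops of a multi-top cone, the surface map might a priori bend inside a single flat triangle. I would handle this by noting that $\partial Cone\ A$ is a union of faces of unit cubes with integer vertices, and that each face splits into two slant triangles whose projections are cells of $H_0$. Consequently the three vertex lifts $v_{\partial Cone\ A}(a)$, $v_{\partial Cone\ A}(b)$, $v_{\partial Cone\ A}(c)$ lie on a common cube face. This is the same observation already used in Corollary \ref{cor_C2}, and I would cite that corollary rather than reprove it.
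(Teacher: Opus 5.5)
Your proposal is correct and follows the paper's own proof. Consistency of $\beta_{Cone\ A}$ from Corollary \ref{cor_D2} gives the flow via Lemma \ref{lemma_B}, and regularity follows because every triangle has exactly one normal edge. Your explicit check that the edge values are a permutation of $\pm(1,1,-2)$ simply makes concrete the facts (1)--(3) that the paper asserts without proof inside the proof of Corollary \ref{cor_D2}.
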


\begin{proof}
By Corollary \ref{cor_D2}, $\beta_{Cone\ A}$ gives a consistent vector field on $H_0$. 
By Lemma \ref{lemma_B}.  a flow is defined by a consistent vwctor field. Since every triangle has only one normal edge, $\beta_{Cone\ A}$ is regular on $H_0$.
\end{proof}

\begin{remark}
For simplicity, we write $\Psi_{Cone\ A}$ instead of $\Psi_{\beta_{Cone\ A}}$.
\end{remark}

\begin{corollary}[Flows on $M$]\label{cor_flow_on_M}
Given $M \subset H_0$ and $Cone\ A \in CONE(H_0)$. Then, ${\Psi}_{Cone\ A}|_M \in FLOW(M)$..
\end{corollary}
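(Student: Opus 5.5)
By Lemma \ref{lemma_E}, $\Psi_{Cone\ A}$ is a flow on $H_0$. We identify $H_0$ with $M_0$, so $M \subset H_0$ and $Tri(M) \subset Tri(H_0)$. The plan is to build a flow on $M$ directly from $\Psi_{Cone\ A}$ and then check that it satisfies the definition of the restriction $\Psi_{Cone\ A}|_M$.

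\emph{Construction.} Put $\beta = \beta_{Cone\ A}$ (Lemma \ref{lemma_D}) and define $\beta_M \in C^1(M)$ as the restriction of $\beta$ to the oriented edges of $M$. This makes sense because $Edge(M) \subset Edge(H_0)$. Consistency of $\beta_M$ is inherited from Corollary \ref{cor_D2}. The condition there concerns pairs $abc, abd$ sharing an edge $ab$; every such pair in $Tri(M)$ is also a pair in $Tri(H_0)$, and the values of $\beta$ on their edges are unchanged. Lemma \ref{lemma_B} therefore gives a flow $\Psi_{\beta_M} \in FLOW(M)$. Its normal edges are exactly the normal edges of $\Psi_{Cone\ A}$ lying on triangles of $M$, since each triangle's normal edge is determined by the same three values $|\beta(ab)|, |\beta(bc)|, |\beta(ca)|$.

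\emph{Verification.} We must show that every trajectory $\phi \in \Psi_{\beta_M}$ satisfies $\phi \subset \psi$ for some $\psi \in \Psi_{Cone\ A}$.
\begin{itemize}
\item Two triangles of $M$ are connected in $\phi$ only through a common edge that is not normal.
\item The same edge is then not normal in $H_0$, so the two triangles are also connected in $\Psi_{Cone\ A}$.
\item Hence every connected arrangement in $\Psi_{\beta_M}$ is contained in a single trajectory of $\Psi_{Cone\ A}$.
\end{itemize}
Uniqueness of the restriction is part of the definition of $\Psi|_S$, so $\Psi_{Cone\ A}|_M = \Psi_{\beta_M} \in FLOW(M)$.

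\emph{Main obstacle.} A trajectory $\psi$ of $\Psi_{Cone\ A}$ may leave $M$ and re-enter it. Then $\psi \cap Tri(M)$ splits into several pieces, each of which must be its own trajectory on $M$. The construction via $\beta_M$ handles this automatically, because connections are only made through edges shared by two triangles of $M$. The flow condition needs a short check: distinct components share no triangles, and each $abc \in Tri(M)$ lies in exactly one of them. Both follow from the fact that the components are the connected components of a graph on $Tri(M)$.

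Note that $\Psi_{\beta_M}$ need not be regular. Triangles of $M$ adjacent to $Bd(M)$ may become terminal or isolated. This does not matter here, since the statement only claims membership in $FLOW(M)$.
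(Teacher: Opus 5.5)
Your argument is correct, and it takes a somewhat different route from the paper. The paper gives no argument beyond ``it follows immediately from the definitions.'' The most literal unpacking of that works directly on the flow:
\begin{itemize}
\item for each trajectory $\psi$ of $\Psi_{Cone\ A}$, keep the triangles of $\psi$ lying in $Tri(M)$;
\item keep the connections of $\psi$ through edges shared by two triangles of $M$;
\item split the result into connected pieces.
\end{itemize}
These pieces are disjoint, cover $Tri(M)$, and each lies inside a single $\psi$.

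You instead pass to the vector field. You restrict $\beta_{Cone\ A}$ to $\beta_M$, check that consistency (Corollary \ref{cor_D2}) survives restriction, and rebuild the flow on $M$ with Lemma \ref{lemma_B}. The direct cutting argument is more elementary and more general: it uses nothing about cones or $\beta$, and it works for any flow and any submesh. Your route costs the consistency check but buys the extra identity $\Psi_{Cone\ A}|_M=\Psi_{\beta_{Cone\ A}|_M}$. That is the kind of identity Proposition \ref{pro_G} tacitly relies on when it glues the $\beta_{Cone\ A_i}$ on the $S_i$ into one $\beta\in C^1(M)$ and asserts $\Psi_0=\Psi_\beta$.

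One caveat concerns your final step. The containment condition defining $\Psi|_S$ does not by itself single out a flow; for example, the flow on $M$ in which every triangle is isolated also satisfies it. So ``uniqueness is part of the definition'' leans on an assertion of the paper that its literal definition does not support. What actually makes $\Psi_{\beta_M}$ the intended restriction is the property you already noted: two triangles of $M$ sharing an edge are joined in $\Psi_{\beta_M}$ exactly when they are joined in $\Psi_{Cone\ A}$. Since the corollary only asserts membership in $FLOW(M)$, existence is all that is needed, and your proof stands.
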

\begin{proof}
It follows immediately from the definitions.
\end{proof}

\begin{figure}
\centering
\captionsetup{width=1.0\linewidth}
\includegraphics{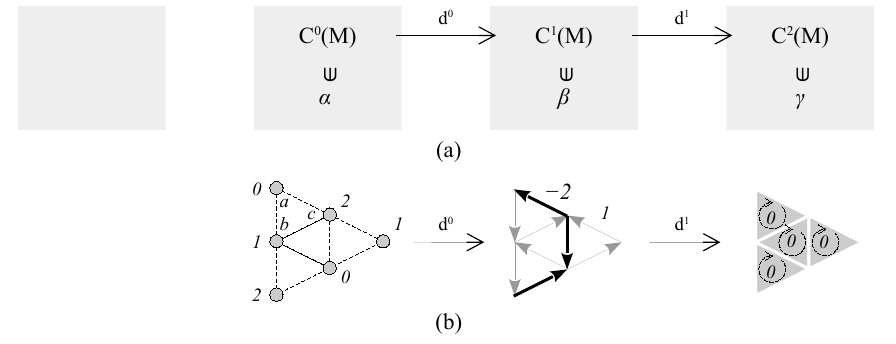}
\caption{Cochain complex $C^{\ast}(M)$.}
\label{figure6}
\end{figure}

\subsubsection{Cochain complex $C^{\ast}(M)$ on $M$ }\label{sec326}\ 

In cohomology theory, ``cochain complexes'' are the fundamental machinery used to define the cohomology class of vector fields on a geometric object. Here we define the cohomology class of vector fields on a mesh to identify the singularities of a loop. 

\begin{remark}
In this paper, we consider two types of complexes. One is a chemical complex, such as a molecule complex, and the other is a mathematical complex, such as a cochaine complex. Be careful not to confuse them.
\end{remark}

\begin{definition} [$C^{\ast}(M)$]
Given $M \subset H_0$. The \textit{cochain complex} $C^{\ast}(M)$ on $M$ is a collection of three objects $C^0(M)$, $C^1(M)$, and $C^2(M)$ with two linear mappings $d^1$ and $d^2$ defined by
\begin{align*}
d^0: C^0(M) &\rightarrow C^1(M),\quad  d^0f(v_0v_1):=f(v_1)-f(v_0), \\
d^1: C^1(M) &\rightarrow C^2(M),\quad  d^1f(v_0v_1v_2):=f(v_0v_1)+f(v_1v_2)+f(v_2v_0)
\end{align*}
(Figure \ref{figure6} (a)), $d^0$ and $d^1$ are called \textit{coboundary operators} of $C^{\ast}(M)$.
\end{definition}

\begin{remark}
$d^1d^0f=0$ for $\forall f \in C^0(M)$.
\end{remark}

\begin{example} 
In Figure \ref{figure6} (b) left, a potential function $\alpha \in C^0(M)$ is given: 
\begin{equation*}
\alpha(a)=0,\  \alpha(b)=1,\ \alpha(c)=2,\ \dots
\end{equation*}
Then, 
\begin{equation*}
d^0\alpha(ab)=1,\ d^0\alpha(bc)=1, \ d^0\alpha(ca)=-2,\ \dots
\end{equation*}
(Figure \ref{figure6} (b) middle), and
\begin{equation*}
d^1d^0\alpha(abc)=0,\  \dots
\end{equation*}
(Figure \ref{figure6} (b) right).
\end{example}

\begin{definition} [Exactness and closedness]
Given $M \subset H_0$ and $\beta \in C^1(M)$. $\beta$ is called \textit{exact} if $\exists \alpha \in C^0(M)$ such that
\begin{equation*}
\beta=d^0\alpha. 
\end{equation*}
$\beta$ is called \textit{closed} if 
\begin{equation*}
d^1\beta=0.
\end{equation*}
\end{definition}

\begin{remark}
Exact vector fields are closed. 
\end{remark}

\begin{example}
The vector field given in Figure \ref{figure6} (b) middle is exact and closed.
\end{example}

\begin{lemma}\label{lem_b_exact}
Given $Cone\ A \in CONE(H_0)$. Then, $\beta_{Cone\ A} \in C^1(H_0)$ is exact, i.e.,
\begin{equation*}
\beta_{Cone\ A}=d^0\alpha_{Cone\ A}.
\end{equation*}
\end{lemma}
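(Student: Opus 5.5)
The identity will be checked edge by edge: for every oriented edge $ab \in Edge(H_0)$ we show $\beta_{Cone\ A}(ab) = \alpha_{Cone\ A}(b) - \alpha_{Cone\ A}(a)$. By the definition of $d^0$, the right-hand side is exactly $d^0\alpha_{Cone\ A}(ab)$. Both sides are linear in $ab$, and both change sign when $a$ and $b$ are swapped, so it suffices to treat edges as they occur inside triangles $abc \in Tri(H_0)$.

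First fix $ab$ and choose any triangle $abc \in Tri(H_0)$ containing it. By Lemma \ref{lemma_C}, $\sigma_{Cone\ A}(abc) = PQR \bmod \sim_T$, where
\begin{equation*}
P = v_{\partial Cone\ A}(a), \quad Q = v_{\partial Cone\ A}(b), \quad R = v_{\partial Cone\ A}(c).
\end{equation*}
We must check that this representative is a legitimate slant triangle, i.e.\ $PQR \in \pi^{-1}(abc) \cap Tri(\mathbf{R}^3)$. This is the content of Corollary \ref{cor_C2}, whose proof notes that $PQR$ lies on a face of $Cone\ A$; we take it as given.

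Then Definition \ref{def_e_vec} gives $e_{\sigma_{Cone\ A}(abc)}(ab) = Q - P$. Since $\Vert \cdot \Vert$ is linear on $\mathbf{Z}^3$,
\begin{equation*}
\beta_{Cone\ A}(ab) = \Vert Q - P \Vert = \Vert v_{\partial Cone\ A}(b) \Vert - \Vert v_{\partial Cone\ A}(a) \Vert = \alpha_{Cone\ A}(b) - \alpha_{Cone\ A}(a),
\end{equation*}
where the last equality is the definition in Lemma \ref{lem_C3}. Independence of the choice of $abc$ is automatic here, because the final expression involves only $a$ and $b$; this also re-proves the edge-continuity used in Lemma \ref{lemma_D}.

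The only delicate point is the independence of representatives. Definition \ref{def_e_vec} computes $e_s(ab)$ from any $PQR$ in the $\sim_T$-class. Replacing $PQR$ by its translate by $(k,k,k)$ leaves $Q-P$ unchanged, so the particular representative given by the surface map is admissible. Here we rely on $v_{\partial Cone\ A}$ being a well-defined function $H_0 \to \partial Cone\ A$ with $\pi \circ v = \mathrm{id}$ (Definition \ref{def_v_partial}), so that the same point $v_{\partial Cone\ A}(a)$ is used for every triangle meeting $a$. With these two facts the computation above is purely formal, and $\beta_{Cone\ A} = d^0 \alpha_{Cone\ A}$ follows. As a consequence, $\beta_{Cone\ A}$ is closed, by the remark that $d^1 d^0 = 0$.
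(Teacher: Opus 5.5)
Your proposal is correct and matches the paper's argument, which simply says the identity ``follows immediately from the definitions''. Your computation is exactly that unpacking: with $P=v_{\partial Cone\ A}(a)$ and $Q=v_{\partial Cone\ A}(b)$, linearity of $\Vert\cdot\Vert$ gives $\beta_{Cone\ A}(ab)=\Vert Q-P\Vert=\alpha_{Cone\ A}(b)-\alpha_{Cone\ A}(a)=d^0\alpha_{Cone\ A}(ab)$. One small nitpick: the fact that $PQR$ is a slant triangle lying over $abc$ comes from Lemma~\ref{lemma_C}, not Corollary~\ref{cor_C2}, but this does not affect your argument.
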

\begin{proof}
It follows immediately from the definitions.
\end{proof}

\begin{lemma}\label{lem_b_closed}
Given $M \subset H_0$ and edge-continuous  $\sigma \in \Gamma(M)$,Then, $\beta_{\sigma} \in C^1(M)$ is closed, i.e.,
\begin{equation*}
d^1\beta_{\sigma}=0.
\end{equation*}
\end{lemma}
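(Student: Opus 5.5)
The plan is to compute $d^1\beta_{\sigma}$ directly on a single oriented triangle and observe that the edge vectors of any slant triangle telescope to zero.

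Fix $abc \in Tri(M)$. The section $\sigma$ assigns to it a class $s=\sigma(abc)\in T(abc)$. By Definition \ref{def_e_vec}, $s = PQR \bmod \sim_T$ for some $PQR\in\pi^{-1}(abc)$ with $\pi(P)=a$, $\pi(Q)=b$, $\pi(R)=c$. By Lemma \ref{lemma_A}, the value $\beta_{\sigma}(ab)$ does not depend on which triangle containing $ab$ we use, because $\sigma$ is edge-continuous. So all three values on the boundary of $abc$ can be evaluated with the same triangle $abc$:
\begin{equation*}
\beta_{\sigma}(ab)=\Vert Q-P\Vert,\quad \beta_{\sigma}(bc)=\Vert R-Q\Vert,\quad \beta_{\sigma}(ca)=\Vert P-R\Vert .
\end{equation*}
The map $\Vert (x,y,z)\Vert = x+y+z$ is additive on $\mathbf{Z}^3$. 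Hence
\begin{equation*}
d^1\beta_{\sigma}(abc)=\Vert (Q-P)+(R-Q)+(P-R)\Vert=\Vert (0,0,0)\Vert =0.
\end{equation*}

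Two bookkeeping points remain. First, orientation: for the other orientations of the triangle, e.g. $bac$, we use $\beta_{\sigma}(ba)=-\beta_{\sigma}(ab)$ together with the sign convention $\gamma(bac)=-\gamma(abc)$ on $C^2(M)$. The value is then again zero, and since $d^1\beta_\sigma$ is linear, it vanishes on all of $C_2(M)$. Second, well-definedness: the edge vectors do not depend on the representative of the class modulo $\sim_T$. This is noted in Definition \ref{def_e_vec}; shifting $P,Q,R$ by $(k,k,k)$ cancels in the differences.

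The only genuinely delicate step is the first one: making sure that evaluating all three edges through the same triangle $abc$ is legitimate. That is exactly what edge-continuity (Definition \ref{def_edge_cont}) and Lemma \ref{lemma_A} guarantee. Without edge-continuity, $\beta_\sigma$ would not even be a well-defined element of $C^1(M)$. Everything else is the additivity of $\Vert\cdot\Vert$.
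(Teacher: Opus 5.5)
Your proposal is correct and takes essentially the same approach as the paper. The paper says only that the lemma ``follows immediately from the definitions''; you carry out that unwinding: evaluate all three edges of $abc$ through $\sigma(abc)$ (legitimate by edge-continuity and Lemma~\ref{lemma_A}), then use $e_s(ab)+e_s(bc)+e_s(ca)=0$ together with the additivity of $\Vert\cdot\Vert$, with your orientation and $\sim_T$-representative remarks covering the remaining bookkeeping.
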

\begin{proof}
It follows immediately from the definitions.
\end{proof}

\begin{definition} [$H^1(M)$]
Given $M \subset H_0$. The \textit{first cohomology set} $H^1(M)$ on $M$ is the quotient set defined by
\begin{equation*}
H^1(M):=Ker_M(d^1)/Image_M(d^{0}).
\end{equation*}
where
\begin{align*}
Image_M(d^0)&:=\{\beta \in C^1(M)\  |\  \exists \alpha \in C^0(M) \text{ such that } \beta=d^0\alpha \} \subset C^1(M), \\
Ker_M(d^1)&:=\{\beta \in C^1(M)\  |\  d^1\beta =0 \} \subset C^1(M).
\end{align*}
The \textit{cohomology class} $[\beta]$ of $\beta$ is the image of $\beta \in Ker_M(d^1)$ in $H^1(M)$,
\end{definition}

\begin{remark}
$Image_M(d^{0}) \subset Ker_M(d^1)$ because $d^1d^0f=0$ for $\forall f \in C^0(M)$.
\end{remark}

\begin{remark}
We write $H^1(M)=0$ if $Ker_M(d^1)=Image_M(d^{0})$.
\end{remark}

\begin{proposition}
$H^1(M_0)=0$.
\end{proposition}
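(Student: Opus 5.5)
To show $H^1(M_0)=0$, we must show that every closed $\beta \in C^1(M_0)$ is exact, i.e. $Ker_{M_0}(d^1)\subset Image_{M_0}(d^0)$. The reverse inclusion is the remark $d^1d^0=0$. The plan is the discrete Poincaré lemma: integrate $\beta$ along edge paths from a fixed base vertex. The fact that $|M_0|=\mathbf{R}^2$ has no holes will make the integral path-independent.

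\textbf{Step 1 (construction).} Fix a base vertex $v_0 \in Ver(M_0)$. For $v \in Ver(M_0)$, choose an edge path $\gamma: v_0=w_0, w_1, \ldots, w_n=v$ with $w_{i-1}w_i \in Edge(M_0)$; such a path exists because the edge graph of $M_0$ is connected. Define
\begin{equation*}
\alpha(v):=\sum_{i=1}^{n}\beta(w_{i-1}w_i) \in \mathbf{Z}.
\end{equation*}

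\textbf{Step 2 (well-definedness).} This is the main point. Take two paths $\gamma,\gamma'$ from $v_0$ to $v$. Then $\gamma\ast\gamma'^{-1}$ is a closed edge path $c$, viewed as a $1$-cycle in $C_1(M_0)$. The difference of the two values of $\alpha(v)$ is $\beta(c)$, using $\beta(ba)=-\beta(ab)$. Since $M_0$ triangulates the plane, which is simply connected, $c$ is a boundary: $c=\partial\bigl(\sum_k n_k t_k\bigr)$ for finitely many oriented triangles $t_k$, where $\partial(abc)=ab+bc+ca$.

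To keep this self-contained, one can argue combinatorially instead. Any closed edge path can be reduced to the trivial path by two kinds of moves. The first removes a backtrack $ab,ba$, which leaves $\beta(c)$ unchanged by antisymmetry. The second replaces one edge of a triangle by the other two, or the reverse; this changes $\beta(c)$ by $\pm d^1\beta(abc)=0$. The reduction can be done by induction on the number of triangles of $M_0$ enclosed by the loop, or on the maximal distance of the path from $v_0$. Hence $\beta(c)=\sum_k n_k\, d^1\beta(t_k)=0$, so $\alpha(v)$ does not depend on the path.

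\textbf{Step 3 (conclusion).} For any edge $ab$, take a path to $a$ and extend it by the edge $ab$ to get a path to $b$. This gives $\alpha(b)-\alpha(a)=\beta(ab)$, i.e. $d^0\alpha=\beta$. Hence $\beta\in Image_{M_0}(d^0)$ and $H^1(M_0)=0$.

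The only real obstacle is Step 2, the claim that every closed edge path in $M_0$ bounds a finite $2$-chain. I would prove it by the combinatorial reduction above. Alternatively, one can identify $M_0$ with $H_0$ (Definition \ref{def_H0}) and contract the loop toward $v_0$ one triangle at a time, using that the region enclosed by a finite loop in the triangulated plane is a finite union of triangles.
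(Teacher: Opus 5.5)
Your proof is correct and follows the same route as the paper. You fix a base vertex, define $\alpha$ by summing $\beta$ along an edge path, use $d^1\beta=0$ to get path independence, and conclude $d^0\alpha=\beta$. The paper simply asserts path independence, whereas you justify it by noting that every closed edge path in the triangulated plane bounds a finite $2$-chain, which you reduce to backtrack and triangle moves.
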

\begin{proof}
It is sufficient to show that $Image(d^0) \supset Ker(d^1)$. Given $\beta \in C^1(M_0)$ such that 
\begin{equation*}
d^1\beta=0.
\end{equation*}
We can construct $\alpha \in C^0(M_0)$ such that 
\begin{equation*}
\beta=d^0\alpha
\end{equation*}
as follows:

Choose any vertex $v_0 \in Ver(M_0)$, and define $\alpha(v_0)$ by
\begin{equation*}
\alpha(v_0):=0.
\end{equation*}
For vertex $v_1 \in Ver(M_0)$ other than $v_0$, define $\alpha([v_1])$ by
\begin{equation*}
\alpha(v_1):=\beta(v_0v_{a1})+\beta(v_{a1}v_{a2})+ \cdots +\beta(v_{a(k-1)}v_{ak}) + \beta(v_{ak}v_1),
\end{equation*}
where 
\begin{equation*}
v_0v_{a1} \cdots v_{ak}v_1 \text{ is a polygonal line connecting $v_0$ and $v_1$}.
\end{equation*}
Note that the value on the right side does not depend on the polygonal line connecting $v_0$ and $v_1$ because $d^1\beta=0$.
\end{proof}

\subsubsection{Cone complex $\prod^{\ast} CONE(\bigcap^{\ast}S_i)$ on $M$}\label{sec327}\ 

Given $M \subset H_0$ and  $\beta \in C^1(M)$. We define another complex on $M$ to compute an ``continuation'' of $\beta$ over the holes within $|M|$. (In other words, the domain of $\beta$ is extended over the holes within $|M|$.)

\paragraph{3.2.7.1 \textit{Affine Flows and Collective Affine Flows}}\label{para3271} 
Flows of triangles are broadly classified into two kinds: one is defined by a tangent cone, the other is defined by multiple tangent cones collectively.

Given $M \subset H_0$. Identifying a tangent cone $Cone\ A$ with its surface map $v_{{\partial}Cone\ A}(x)$ (Definition \ref{def_v_partial}), we can consider the restriction of $Cone\ A$ on $M$.

\begin{figure}
\centering
\captionsetup{width=1.0\linewidth}
\includegraphics{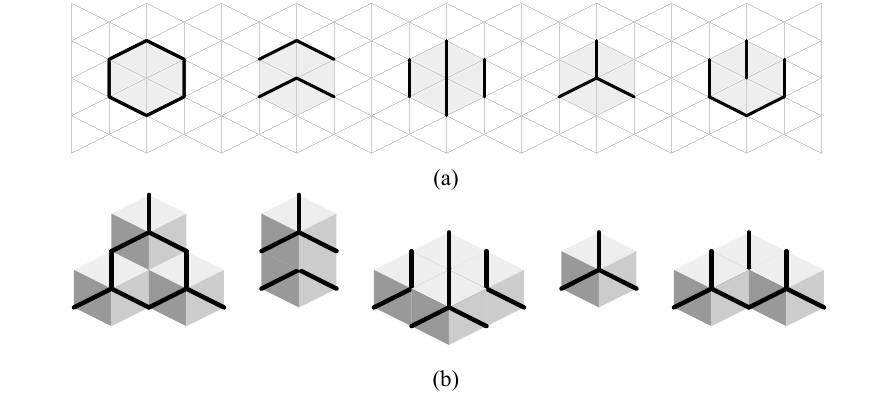}
\caption{Flows on a hexagon consisting of $6$ triangles.}
\label{figure7}
\end{figure}

\begin{definition} [$CONE(M)$]
Given $M \subset H_0$ and $Cone\ A \in CONE(H_0)$. $(Cone\ A, M)$ denotes the restriction of $Cone\ A$ on $M$, i.e., 
\begin{equation*}
(Cone\ A, M) := \text{the surface map $v_{{\partial}Cone\ A}(x)$ restricted on $|M|$}. 
\end{equation*}
$CONE(M)$ is the collection of all tangent cones restricted on $M$, i.e.,
\begin{equation*}
CONE(M):=\{(Cone\ A, M)\ |\ Cone\ A \in CONE(H_0)\}.
\end{equation*}
\end{definition}

\begin{definition} [Affine flow $\Psi(Cone\ A, M)$]
Given $M \subset H_0$ and $Cone\ A \in CONE(H_0)$. $\Psi(Cone\ A\, M)$ is the regular flow on $M$ defined by
\begin{equation*}
\Psi(Cone\ A, M) := \Psi_{Cone\ A}|_{M} \quad  \in FLOW(M)
\end{equation*}
(Corollary \ref{cor_flow_on_M}). $\Psi(Cone\ A, M)$ is called an \textit{affine} flow on $M$. $Cone\ A$ is called the \textit{tangent cone associated with the flow}. 
\end{definition}

\begin{remark}
All affine flows on $M \subset H_0$ can be extended to affine flows on $H_0$. 
\end{remark}

\begin{example}[Flows on a hexagon] \label{example_hex}
Figure \ref{figure7} (a) shows all regular flows on a hexagonal region on $H_0$ consisting of $6$ triangles (excluding rotations). All of them are affine (Figure \ref{figure7} (b)).
\end{example}

\begin{definition} [Collective affine flow $\sum_{i \in I} \Psi(Cone\ A_i, S_i)$]
Given $M \subset H_0$, its open covering $V=\{S_i \subset M \ |\ i \in I \}$ (Definition \ref{def_covering}), and locally defined affine flows 
\begin{equation*}
CAF=\{\Psi(Cone\ A_i, S_i) \in FLOW(S_i) \ |\ i \in I\}.
\end{equation*}
Suppose $CAF$ is consistent (Definition \ref{def_consistancy21}), i.e.,
\begin{equation*}
\Psi(Cone\ A_i, S_i)|_{S_i \cap S_j }= \Psi(Cone\ A_j, S_j)|_{S_i \cap S_j } \quad (i,\ j \in I).
\end{equation*}
Then, by patching the affine flows in $CAF$ together, we obtain a flow $\Psi_0 \in FLOW(M)$ (Lemma \ref{lem_colletiveflows}), i.e.,
\begin{equation*}
\Psi_{0}|_{S_i} := \Psi(Cone\ A_i, S_i) \quad \in FLOW(S_i) \quad (i \in I).
\end{equation*}
$\Psi_0$ is called the \textit{collective affine flow on $M$ (defined by $CAF$)} and denoted as a formal sum of locally defined affine flows, i.e.,
\begin{equation*}
\Psi_0=\sum_ {i \in I} \Psi(Cone\ A_i, S_i) \quad \in FLOW(M).
\end{equation*}
\end{definition}

\begin{remark}
Different tangent cones may define the same flow on a mesh. Therefore, 
\begin{equation*}
\Psi(Cone\ A_i, S_i)= \Psi(Cone\ A_j, S_j) \quad \text{on } S_i \cap S_j
\end{equation*}
(i.e., two flows are identical on $S_i \cap S_j$) dose not imply 
\begin{equation*}
(Cone\ A_i, S_i)= (Cone\ A_j, S_j) \quad \text{on } S_i \cap S_j
\end{equation*}
(i.e., two tangent cones are identical on $S_i \cap S_j$) (see Lemma \ref{lemma_H} below).
\end{remark}

\begin{figure}
\centering
\captionsetup{width=1.0\linewidth}
\includegraphics{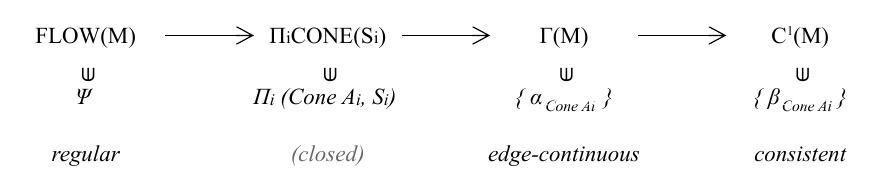}
\caption{Computation of $\{\beta_{Cone\ A_i}\} \subset C^1(M)$ ($i \in I$) from $\Psi \in FLOW(M)$. (The definition of ``closed'' locally defined tangenmt cones is given in Definition \ref{def_exact_and_closed_cones}.)}
\label{figure9}
\end{figure}

\begin{proposition} \label{proposition_F}
Given $M \subset H_0$. All regular flows on $M$ are collective affine flows.
\end{proposition}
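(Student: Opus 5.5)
The plan is to take an arbitrary regular flow $\Psi \in FLOW(M)$, cut $M$ into small hole-free pieces, show that $\Psi$ is affine on each piece, and then reassemble it with Lemma \ref{lem_colletiveflows}. For the covering I would use vertex stars. For each $v \in Ver(M)$, let $S_v \subset M$ be the submesh with
\begin{equation*}
Tri(S_v)=\{abc \in Tri(M)\ |\ v \in \{a,b,c\}\}.
\end{equation*}
Each $|S_v|$ is contained in a single hexagon of $H_0$, so it has no hole inside. Its interior is nonempty only if $v$ lies in $Int(M)$. In that case $S_v$ is the full hexagon around $v$: if some triangle at $v$ were missing, $v$ would lie on $Bd(M)$.

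Next I check that $V=\{S_v\}$ is an open covering in the sense of Definition \ref{def_covering}. Let $x \in Int(M)$.
\begin{enumerate}
\item If $x$ is in the open interior of a triangle, it lies in $Int(S_v)$ for any vertex $v$ of that triangle whose full hexagon is in $M$.
\item If $x$ is on an interior edge $ab$, the two triangles on $ab$ both contain $a$. So $x \in Int(S_a)$, provided the hexagon of $a$ or of $b$ lies in $M$.
\end{enumerate}
A configuration where $x \in Int(M)$ but no such hexagon lies in $M$ must be handled separately. If it occurs, I would enlarge the pieces to unions of two adjacent stars. These are still hole-free and still small enough for the case analysis below.

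Next, the local flows. Put $\Psi_v:=\Psi|_{S_v}$. Consistency in Definition \ref{def_consistancy21} is automatic, because every $\Psi_v$ is a restriction of the same $\Psi$. So $\Psi_v|_{S_v\cap S_w}=\Psi|_{S_v\cap S_w}=\Psi_w|_{S_v\cap S_w}$. Lemma \ref{lem_colletiveflows} then gives $\Psi=\sum_v \Psi_v$. It remains to show that each $\Psi_v$ equals $\Psi(Cone\ A_v, S_v)$ for some tangent cone.

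Since $\Psi$ is regular, every triangle of $S_v$ has exactly one normal edge, and normal edges agree across shared edges. When $S_v$ is a full hexagon, $\Psi_v$ is a regular flow on the hexagon. By Example \ref{example_hex} every such flow is affine, so a cone $Cone\ A_v$ exists and we are done. For a partial star, I would first extend the normal-edge assignment to the missing triangles of the hexagon. The aim is a regular flow on the whole hexagon; I expect a short case analysis shows this is always possible. Restricting the resulting cone flow back to $S_v$ then gives $\Psi_v$.

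The main obstacle is exactly this local affinity step, which rests on Example \ref{example_hex} being genuinely exhaustive. To make it less pictorial, I would argue as follows. In each flat triangle $abc$, the three classes of slant lifts in $T(abc)$ correspond to the three choices of normal edge: the normal edge is the one with $|\beta|=2$. Choose the lift matching $\Psi$ on every triangle of the hexagon, and check edge-continuity (Definition \ref{def_edge_cont}) across the six interior edges, which is where the case analysis lives. Lemma \ref{lem_b_closed} then gives a closed $\beta_\sigma$. It integrates to a potential $\alpha$ on the simply connected hexagon, exactly as in the proof of $H^1(M_0)=0$. Finally, take $A$ to be the finite set of lifted vertices of the hexagon's six triangles. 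Its cone surface reproduces these lifts, so $\Psi(Cone\ A,S_v)=\Psi_v$.
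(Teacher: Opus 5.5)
\textbf{Gap: the partial-star step.} Your skeleton is the paper's. The paper covers $M$ by hexagons, gets affinity on each from Example \ref{example_hex}, and notes that restrictions of one flow automatically agree on overlaps. The step you add for partial stars, however, fails as stated: the normal-edge assignment inherited from $\Psi$ cannot always be extended to the missing triangles.

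The obstruction is the paper's own Penrose stairs. A loop of length $12$ with a hole is the ring of $12$ triangles around a single triangle $t=abc$ (Example \ref{example_Q}). Here $S_a$ consists of the five ring triangles around $a$. The two of them adjacent to $t$ are joined to the rest of the loop through their outer edges and their spokes away from $t$, so their $\Psi$-normal edges are $ab$ and $ac$. Any extension that keeps these data forces $t$ to have two normal edges, which is impossible.

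In your lifting version the same failure shows up as two lifted rim vertices $\tilde b,\tilde c$ whose difference is a permutation of $\pm(2,1,1)$. One therefore lies strictly above the other in all coordinates and cannot be on $\partial Cone\ A$. This happens at every vertex of any one-triangle hole, so it is not a marginal case.

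\textbf{Repair.} The restriction $\Psi|_{S_v}$ only records which pairs of triangles of $S_v$ are connected, i.e.\ the status of spokes shared by two triangles of $S_v$. Whether a present triangle's normal edge is a spoke bordering a missing triangle or its outer edge is invisible after restriction. So declare every spoke that borders a missing triangle to be connecting. Each triangle of the hexagon then has at most one normal spoke: a present one inherits at most one from $\Psi$, and a missing one has none. Make the outer edge normal for the remaining triangles. This gives a regular pattern on the full hexagon, affine by Example \ref{example_hex}, whose restriction to $S_v$ is $\Psi|_{S_v}$; for the ring it is the six-triangle loop around $a$.

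With this repair your use of partial stars goes beyond the paper's proof. That proof covers $M$ only by full hexagons, so its covering is empty when $M$ is a one-triangle-wide loop such as $L_3$.

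\textbf{Smaller points.} Partial stars do have nonempty interior (open triangles and open spokes between two present triangles). So $\{S_v\}$ satisfies Definition \ref{def_covering} directly, and your fallback to unions of two stars is unnecessary:
\begin{itemize}
\item an open triangle lies in $Int(S_v)$ for each of its vertices $v$;
\item an open edge $ab$ with both adjacent triangles in $M$ lies in $Int(S_a)$;
\item a vertex of $Int(M)$ has a full star.
\end{itemize}

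In your lifting argument, edge-continuity is automatic, since a lifted edge vector is determined by whether the edge is normal. The real content of ``its cone surface reproduces these lifts'' is that no lifted vertex strictly dominates another. On a full hexagon, adjacent vertices differ by a single vector of the form $\pm(1,0,0)$ or $\pm(1,1,0)$ up to permutation, which is never strictly positive. Non-adjacent ones differ by a sum of two such vectors, which is strictly positive only if it is $(1,1,1)$ or a permutation of $(2,1,1)$, i.e.\ only if the two vertices coincide or are adjacent in $H_0$. With that check, your argument actually proves Example \ref{example_hex}, which the paper supports only by a figure.
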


\begin{proof}
Let 
\begin{equation*}
V=\{ U_i \subset M\ |\ i \in I \} \quad (I \subset \mathbf{Z})
\end{equation*}
be the collection of all hexagonal regions on $M$ consisting of $6$ triangles. Then, $V$ gives an open covering of $M$. Let 
\begin{equation*}
\Psi_0 \in FLOW(M)
\end{equation*}
 be a regular flow on $M$. Since $\Psi_0$ is affine on $U_i$ (Example \ref{example_hex}), $\exists Cone\ A_i \in CONE(H_0)$ such that
\begin{equation*}
\Psi_0|_{U_i}=\Psi(Cone\ A_i, U_i) \quad (i \in I). 
\end{equation*}
Then,
\begin{equation*}
\Psi(Cone\ A_i, U_i)|_{U_i \cap U_j}=\Psi(Cone\ A_j, U_j)|_{U_i \cap U_j} \quad (i,\j \in I),
\end{equation*}
and 
\begin{equation*}
\Psi_0=\sum_i \Psi(Cone\ A_i, U_i).
\end{equation*}
\end{proof}

\begin{remark}
$M$ may have holes inside.
\end{remark}

\begin{proposition}\label{pro_G}
Given $M \subset H_0$ and a regular flow $\Psi_0 \in FLOW(M)$. Then, 
\begin{equation*}
\exists \beta \in C^1(M) \text{ such that }\Psi_0={\Psi}_{\beta}
\end{equation*}
(Figure \ref{figure9}). Moreover, 
\begin{equation*}
[\beta]=0 \in H^1(M) \quad \text{if $\Psi_0$ is affine.}
\end{equation*}
\end{proposition}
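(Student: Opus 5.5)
Use Proposition \ref{proposition_F} to write $\Psi_0$ as a collective affine flow, build $\beta$ by patching local vector fields, and then handle the affine case with Lemma \ref{lem_b_exact}.

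\emph{Local data.} By Proposition \ref{proposition_F}, take the open covering $V=\{U_i\}_{i\in I}$ of $M$ by hexagons and tangent cones $Cone\ A_i$ with $\Psi_0|_{U_i}=\Psi(Cone\ A_i,U_i)$. On each $U_i$ we have the local vector field $\beta_i:=\beta_{Cone\ A_i}|_{U_i}$. By Lemma \ref{lemma_B}, Corollary \ref{cor_D2} and Lemma \ref{lemma_E}, $\Psi_{\beta_i}=\Psi_0|_{U_i}$.

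\emph{Patching.} The $\beta_i$ need not agree on overlaps, since different cones can give the same flow. The remedy is to define $\beta$ edge by edge from the flow. Fix an orientation of the direction of motion along each trajectory of $\Psi_0$; a regular trajectory is a chain or cycle, so this is possible. Then set $\beta(ab)=\pm 2$ if $ab$ is the normal edge of its triangles (the remark after Lemma \ref{lemma_B} makes this well defined, and regularity gives exactly one normal edge per triangle). For the two connecting edges, set $\beta(ab)=\mp 1$, with signs chosen so that $\beta(ab)+\beta(bc)+\beta(ca)=0$ on each triangle. This mimics the values $1,1,-2$ of the example after Lemma \ref{lemma_A}.

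The key check is that these values are single-valued on every edge, including connecting edges shared by two triangles. I would deduce this from the fact that on each hexagon $U_i$ the assignment coincides, up to the choice of orientation, with $\beta_{Cone\ A_i}$, using Example \ref{example_hex}. Each edge then lies in some hexagon where the value is already consistent. The $\beta$ so defined satisfies $|\beta|=2$ exactly on normal edges, so it is consistent and $\Psi_\beta=\Psi_0$ by Lemma \ref{lemma_B}.

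\emph{Main obstacle.} The step I expect to be hardest is orienting the trajectories so that the sign convention on a shared connecting edge agrees from both sides. I would first try to avoid the issue by orienting via the cones: on each $U_i$, take the sign pattern of $\beta_{Cone\ A_i}$. Then I would argue that two affine flows agreeing on $U_i\cap U_j$ induce the same normalized values $\pm1,\pm2$ there, possibly after negating $\beta_j$. The ambiguity is a global sign per trajectory, which is harmless since flows only see $|\beta|$.

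\emph{Affine case.} If $\Psi_0=\Psi(Cone\ A,M)$ is affine, take $\beta:=\beta_{Cone\ A}|_M$ instead. Then $\Psi_\beta=\Psi_0$ by Corollary \ref{cor_flow_on_M}. By Lemma \ref{lem_b_exact}, $\beta=d^0(\alpha_{Cone\ A}|_M)$, which is exact on $M$ because restriction commutes with $d^0$. It is also closed by Lemma \ref{lem_b_closed}, so $[\beta]=0\in H^1(M)$. This holds even if $M$ has holes, because the potential $\alpha_{Cone\ A}$ is globally defined on $H_0$.
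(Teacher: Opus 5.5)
Your affine half is the paper's argument ($\beta_{Cone\ A}|_M=d^0(\alpha_{Cone\ A}|_M)$ by Lemma~\ref{lem_b_exact}) and is fine. The gap is in the patching step, which starts from a false premise. The local fields $\beta_{Cone\ A_i}$ \emph{do} agree on overlaps, and the paper's proof glues them directly on that basis. The freedom in the choice of cone (e.g.\ $A\mapsto A+(k,k,k)$ in Lemma~\ref{lemma_H}) is seen by $\alpha_{Cone\ A}$, not by $\beta_{Cone\ A}$. The reason is a mod-$3$ count, which the paper leaves implicit and you need to supply.

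Let $\epsilon_1,\epsilon_2,\epsilon_3$ be the standard basis of $\mathbf{Z}^3$, and call $\pi(\epsilon_1),\pi(\epsilon_2),\pi(\epsilon_3)$ the positive directions. Every edge $ab$ of $H_0$ has $b-a=\pm\pi(\epsilon_i)$. Every lift of $\pi(\epsilon_i)$ to $\mathbf{Z}^3$ is $\epsilon_i+k(1,1,1)$, of norm $1+3k$. On the cone surface, $v_{\partial Cone\ A}(b)-v_{\partial Cone\ A}(a)\in\{\epsilon_i,\ \epsilon_i-(1,1,1)\}$ when $b-a=\pi(\epsilon_i)$. Hence
\begin{equation*}
\beta_{Cone\ A}(ab)=
\begin{cases}
1 & \text{if $ab$ is not a normal edge},\\
-2 & \text{if $ab$ is a normal edge}
\end{cases}
\qquad (b-a \text{ positive}),
\end{equation*}
and $\beta_{Cone\ A}(ba)=-\beta_{Cone\ A}(ab)$.

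Thus $\beta_{Cone\ A}$ depends only on the normal-edge pattern, and the same formula defines $\beta$ on all of $M$ directly from $\Psi_0$. This $\beta$ is closed: every triangle, traversed cyclically, has its three edges all positive or all negative, giving $1+1-2=0$ or $-1-1+2=0$. In the affine case it equals $\beta_{Cone\ A}|_M$, so the ``Moreover'' concerns the same $\beta$, not a second one.

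Your orientation bookkeeping never says how orienting a trajectory fixes the signs, and the proposed way out fails. An ambiguity of one sign per trajectory is not harmless. A normal edge shared by two trajectories would get $2$ from one side and $-2$ from the other, so $\beta$ would not be a well-defined cochain. Picking one value there makes $d^1\beta\neq 0$ on some triangle, so $[\beta]$ is undefined.

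``Flows only see $|\beta|$'' shows only that signs are irrelevant to $\Psi_\beta=\Psi_0$. Indeed any $\beta$ with $|\beta|=2$ on normal edges and $1$ elsewhere already gives the first claim. So your sign discussion is unnecessary there, and as written it does not deliver the closedness it was meant to secure. Under the sum-zero constraint, the only admissible choices are $\pm$ the canonical $\beta$ above, with one sign per edge-connected component of $M$. Replacing the orientation step by the mod-$3$ observation closes the gap, and both halves of your proof then refer to one closed $\beta$.
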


\begin{proof} 
By Proposition \ref{proposition_F},  $\exists$an open covering $V=\{S_i \subset M\ |\ i \in I \}$ of $M$ and 
locally defined tangent cones 
\begin{equation*}
\{(Cone\ A_i, S_i) \in CONE(S_i) \ |\ i \in I \} 
\end{equation*}
such that
\begin{equation*}
\Psi_0=\sum_{i \in I} \Psi(Cone\ A_i, S_i).
\end{equation*}
Note that
\begin{equation*}
\Psi(Cone\ A_i, U_i)|_{U_i \cap U_j}=\Psi(Cone\ A_j, U_j)|_{U_i \cap U_j} \quad (i,j \in I).
\end{equation*}
By Lemma \ref{lemma_D} and Corollary \ref{cor_D2}, 
\begin{equation*}
\exists \text{ consistent } \beta_{Cone\ A_i} \in C^1(H_0) \quad (i \in I). 
\end{equation*}
Then,
\begin{equation*}
\beta_{Cone\ A_i}|_{U_i \cap U_j}=\beta_{Cone\ A_j}|_{U_i \cap U_j} \quad (i,j \in I),
\end{equation*}
where $\beta_{Cone\ A_i}|_{U_i \cap U_j}$ denotes the ristriction of $\beta_{Cone\ A_i}$ on $U_i \cap U_j$, i.e., 
\begin{equation*}
\beta_{Cone\ A_i}|_{U_i \cap U_j} \in C^1(U_i \cap U_j).
\end{equation*}
By Lemma \ref{lemma_E}, 
\begin{equation*}
\Psi_0|_{S_i}=\Psi_{\beta_{Cone\ A}}|_{S_i}  \quad (i \in I). 
\end{equation*}
Define $\beta \in C^1(M)$ by
\begin{equation*}
\beta|_{S_i}:=\beta_{Cone\ A_i} \quad (i \in I).
\end{equation*}
Then,
\begin{equation*}
\Psi_0={\Psi}_{\beta}.
\end{equation*}
Moreover, by lemma\ref{lem_b_exact},
\begin{equation*}
\beta|_{S_i}:=d^0\alpha_{Cone\ A_i} \quad (i \in I).
\end{equation*}
If $\Psi_0$ is affine, we have 
\begin{equation*}
\beta_{Cone\ A}=d^0\alpha_{Cone\ A}.
\end{equation*}
\end{proof}

\begin{remark} Note that
\begin{equation*}
\Psi(Cone\ A_i, U_i)|_{U_i \cap U_j}=\Psi(Cone\ A_j, U_j)|_{U_i \cap U_j} \quad (i,j \in I)
\end{equation*}
dose not imply
\begin{equation*}
\alpha_{Cone\ A_i}|_{U_i \cap U_j}=\alpha_{Cone\ A_j}|_{U_i \cap U_j} \quad (i,j \in I).
\end{equation*}
See Example \ref{example_Q} (The \textit{Penrose stairs}).
\end{remark}

I have no proof for the following claim:
\begin{claim}\label{claim3}
Regular flows on $H_0$ are affine.
\end{claim}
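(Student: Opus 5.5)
The plan is to reduce Claim \ref{claim3} to the vanishing of the first cohomology of $H_0$ together with a local-to-global ``integration'' of tangent cones. Let $\Psi_0$ be a regular flow on $H_0$. By Proposition \ref{proposition_F} we take the open covering $V=\{U_i\}$ of $H_0$ by all hexagonal regions of $6$ triangles. We then obtain locally defined tangent cones $Cone\ A_i$ with $\Psi_0|_{U_i}=\Psi(Cone\ A_i,U_i)$. By Proposition \ref{pro_G} there is $\beta\in C^1(H_0)$ with $\Psi_0=\Psi_\beta$ and $\beta|_{U_i}=d^0\alpha_{Cone\ A_i}$.

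The first step is to show that $\beta$ is closed. Every triangle lies in some $U_i$, and on $U_i$ the form $\beta$ is exact, hence closed there (Lemma \ref{lem_b_closed}, or directly $d^1d^0=0$). Thus $d^1\beta=0$ on all of $H_0$. Since $H^1(M_0)=0$ and $H_0$ is identified with $M_0$, there is a global $\alpha\in C^0(H_0)$ with $\beta=d^0\alpha$. Concretely, $\alpha$ is obtained by integrating $\beta$ along polygonal lines, as in the proof of that proposition.

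The second step is to lift $\alpha$ to a surface in $\mathbf{R}^3$ and recognize it as $\partial Cone\ A$ for some $A$. Define $\Sigma(x)$ to be the unique point of $\pi^{-1}(x)\cap\mathbf{Z}^3$ with $\Vert\Sigma(x)\Vert=\alpha(x)$. This is well defined only if $\alpha(x)$ has the correct residue mod $3$ for the coset $\pi^{-1}(x)\cap \mathbf{Z}^3$. We arrange this by choosing $\alpha(v_0)$ correctly at a base vertex and noting that each local $\beta$-value is congruent mod $3$ to the corresponding lattice difference, because it comes from a genuine edge vector $e_s(ab)$.

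Locally on each $U_i$, the lifted points coincide with $v_{\partial Cone\ A_i}$ up to a shift by $(k,k,k)$, since $\alpha$ and $\alpha_{Cone\ A_i}$ differ by a constant on $U_i$. Hence the image of $\Sigma$ is locally a cube-stacking surface: on each hexagon it is one of the configurations of Figure \ref{figure7} (b). We then take $A$ to be the set of lattice points of $\Sigma$ that are local tops, i.e.\ the vertices where the surface has the ``peak'' configuration. We aim to show that $Cone\ A$ is well defined with $\partial Cone\ A=\Sigma(H_0)$, which gives $\Psi_0=\Psi_{Cone\ A}$.

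The main obstacle is this last step, and it has two parts. First, the surface is globally a monotone (stepped) surface, i.e.\ the graph over $H$ of a function whose level structure is that of a union of translated octants. Second, $A$ must be finite, as the definition of $CONE(H_0)$ requires. For the first part I would try a maximum principle. A local condition holds on every hexagon: each vertex has either a descending neighbor in each of three directions, or the cone configuration. From this I would show that $\Sigma$ equals $\partial$ of the union of the octants $\{P+\mathbf{R}_{>0}^3\}$ over $P\in\Sigma\cap\mathbf{Z}^3$.

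For the second part I expect a genuine difficulty. An infinite staircase, or a flow whose tops are unbounded, appears to give a regular flow needing infinitely many tops. So the proof will likely require either allowing infinite $A$, or a compactness or periodicity argument showing that finitely many tops suffice up to the identification in Lemma \ref{lemma_H}, where different cones define the same flow. The Penrose-stairs phenomenon cannot occur on $H_0$, because $H^1=0$ kills the monodromy obstruction; it appears only when $M$ has holes.
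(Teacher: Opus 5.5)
\textbf{The final step cannot be completed: Claim~\ref{claim3} is false as stated.} The paper itself gives no proof of this claim; the author says none is known. Your first steps are sound: the global closed $\beta$, the integration via $H^1(M_0)=0$, the mod-$3$ normalisation, and the local identification with $v_{\partial Cone\ A_i}$. The difficulty you flag in your last paragraph, however, is fatal rather than technical. With $A$ finite, as $CONE(H_0)$ requires, no compactness or periodicity argument can help.

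Here is a counterexample. Write $e_1,e_2,e_3$ for the standard basis and take the plane $z=0$. The slant triangles $OAB$, $OBC$ with $O=(l,m,0)$ project bijectively onto $Tri(H_0)$. Setting $\alpha(\pi(l,m,0)):=l+m$ gives $\beta=d^0\alpha$ with $|\beta|=2$ exactly on the projected diagonals $\pi(O)\pi(O+e_1+e_2)$ and $|\beta|=1$ on all other edges. So $\Psi_\beta$ is a regular flow on $H_0$ whose trajectories are parallel bi-infinite straight strips, and all its normal edges are parallel to $\pi(e_1+e_2)$.

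Now let $A$ be finite and nonempty. Put $m_1=\min_{a\in A}a_1$ and pick integers $y>\max_{a\in A}a_2$, $z>\max_{a\in A}a_3$. Then $\max_{a\in A}\min\{m_1-a_1,\,y'-a_2,\,z'-a_3\}=0$ for all $y'\in[y,y+1]$ and $z'\in[z,z+1]$. Hence the square $\{m_1\}\times[y,y+1]\times[z,z+1]$ lies on $\partial Cone\ A$, and $\Psi_{Cone\ A}$ has normal edges parallel to $\pi(e_2+e_3)=-\pi(e_1)$, which the strip flow has nowhere. The case $A=\emptyset$ gives no surface at all. The identification $A\mapsto A+(k,k,k)$ of Lemma~\ref{lemma_H} changes nothing. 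So this regular flow is not affine.

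\textbf{What your argument does prove, after one more fix.} Even if infinite $A$ were allowed, taking $A$ to be the local tops of $\Sigma$ fails: the plane above, like your infinite staircases, has no tops. Take instead $A:=\Sigma\cap\mathbf{Z}^3$ and read $\max$ as $\sup$.

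Your maximum principle then takes one line. Every edge in direction $\pi(e_i)$ has $\beta\in\{1,-2\}$, so $\alpha$ rises by at most $1$ along it, while $\Vert P+e_i\Vert=\Vert P\Vert+1$. Hence $\Vert P\Vert\ge\alpha(\pi(P))$ implies $\Vert P+e_i\Vert\ge\alpha(\pi(P+e_i))$. In other words, the lattice points on or above $\Sigma$ form an up-set generated by $\Sigma\cap\mathbf{Z}^3$.

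Suppose some $P\in A$ had all coordinates of $Q-P$ positive for a vertex lift $Q=\Sigma(x)$. Then $Q-(1,1,1)\ge P$ would lie in that up-set, although $\Vert Q-(1,1,1)\Vert=\alpha(x)-3<\alpha(x)$. So $v_{\partial Cone\ A}(x)=\Sigma(x)$ at every vertex, which gives $\beta_{Cone\ A}=\beta$ and $\Psi_0=\Psi_{Cone\ A}$.

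The same computation with the finite set $A_M:=\{\Sigma(x)\ |\ x\in Ver(M)\}$ gives $\Psi_0|_M=\Psi(Cone\ A_M,M)$ for every finite $M\subset H_0$. That is all the cube-stacking remark in the conclusion needs. The claim should therefore be amended, either to allow infinite $A$ or to assert affineness on finite submeshes, rather than proved as written.
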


\begin{remark}
Given $\Psi \in FLOW(H_0)$. From Claim \ref{claim3}, it follows that 
\begin{equation*}
\Psi \text{ is regular on } H_0 
\end{equation*}
if and only if 
\begin{equation*}
\exists Cone\ A \in CONE(H_0) \text{ such that } \Psi=\Psi_{Cone\ A}.
\end{equation*}
\end{remark}

\begin{figure}
\centering
\captionsetup{width=1.0\linewidth}
\includegraphics{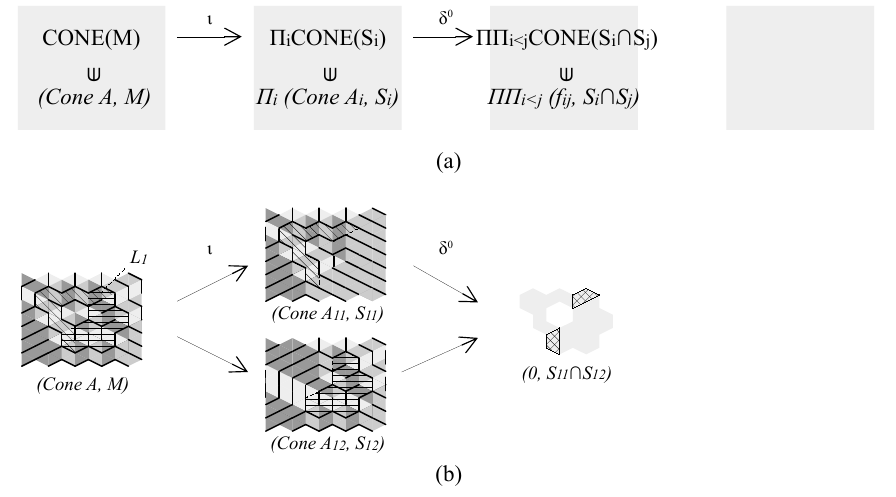}
\caption{Cone complex $\prod^{\ast} CONE(\bigcap^{\ast}S_i)$.}
\label{figure8}
\end{figure}

\paragraph{3.2.7.2 \textit{Cone Complex $\prod^{\ast} CONE(\bigcap^{\ast}S_i)$ on $M$}}\label{para3272} 
Given $M \subset H_0$ (with holes inside). We consider formal products of tangent cones to extend the domain of collective affine flows on $M$ over the holes within $|M|$.

\begin{definition} [$\prod_{i \in I} CONE(S_i)$]
Given $M \subset H_0$ and its open covering 
\begin{equation*}
V=\{S_i \subset M \ |\ i \in I \}.
\end{equation*}
 $\prod_{i \in I} CONE(S_i)$ is the collection of all formal products of tangent cones on $S_i \subset M$ ($i \in I$), i.e.,
\begin{equation*}
\prod_{i \in I} CONE(S_i):=\left\{ \prod_{i \in I} (Cone\ A_i, S_i)\  |\ (Cone\ A_i, S_i) \in CONE(S_i) \ (i \in I)\right\}.
\end{equation*}
\end{definition}

\begin{definition} [$\prod\prod_{i<j} CONE(S_i \cap S_j)$]
Given $M \subset H_0$ and its open covering 
\begin{equation*}
V=\{S_i \subset M \ |\ i \in I \}.
\end{equation*}
$\prod\prod_{i<j} CONE(S_i \cap S_j)$ is the collection of all formal products of $\mathbf{R}$-valued functions on $|S_i \cap S_j| \subset \mathbf{R}^2$ ($i, j \in I,\ i<j$), i.e.,
\begin{multline*}
\prod\prod_{i<j} CONE(S_i \cap S_j):=
\left\{ \prod_{i \in \{i \in I | i<j\}}\prod_{i \in I} (f_{i, j}, S_i \cap S_j)\ |\  \right.\\
\left.\phantom{\prod_{i \in I}} \text{$f_{i, j}$ is a $\mathbf{R}$-valued function on $|S_i \cap S_j|$  ($i, j \in I,\  i<j$)} \right\},
\end{multline*}
where we write $(f_{i, j}, S_i \cap S_j)$ instead of $f_{i, j}$ to emphasize its domain.
\end{definition}

\begin{definition} [Cone complex $\prod^{\ast} CONE(\bigcap^{\ast}S_i)$]
Given $M \subset H_0$ and its open covering $V=\{S_i \subset M \ |\ i \in I \}$. The \textit{cone complex} on $M$ is a collection of three objects $CONE(M)$, $\prod_{i \in I}CONE(S_i)$, and $\prod\prod_{i<j} CONE(S_i \cap S_j)$ with two linear mappings $\iota$ and $\delta_0$ defined by
\begin{equation*}
\iota:CONE(M) \rightarrow \prod_{i \in I}CONE(S_i),\quad
\iota\left((Cone\ A, M)\right):= \prod_{i \in I} \left(Cone\ A, S_i\right),
\end{equation*}
\begin{multline*}
\delta^0: \prod_{i \in I}CONE(S_i) \rightarrow \prod\prod_{i<j} CONE(S_i \cap S_j),\quad  \delta^0\left(\prod_{i \in I} (Cone\ A_i, S_i)\right):= \\
 \prod_{i \in \{i \in I|i<j\}}\prod_{i \in I} \left(v_{{\partial}Cone\ A_i}(x) - v_{{\partial}Cone\ A_j}(x), S_i \cap S_j\right)
\end{multline*}
(Figure \ref{figure8} (a)).
\end{definition}

\begin{remark}
$\delta^0 \iota(Cone\ A)=0$ for $\forall Cone\ A \in CONE(M)$.
\end{remark}

\begin{definition} [Exactness and closedness]\label{def_exact_and_closed_cones}
Given $M \subset H_0$, its open covering $V=\{S_i \subset M \ |\ i \in I \}$, and locally defined tangent cones 
\begin{equation*}
\prod_{i \in I} (Cone\ A_i, S_i) \in \prod_{i \in I} CONE(S_i). 
\end{equation*}
$\prod_{i \in I} (Cone\ A_i, S_i)$ is called \textit{exact} if $\exists (Cone\ A, M) \in CONE(M)$ such that
\begin{equation*}
\prod_{i \in I} (Cone\ A_i, S_i) =\iota((Cone\ A, M)). 
\end{equation*}
$\prod_{i \in I} (Cone\ A_i, S_i)$ is called \textit{closed} if 
\begin{equation*}
\delta^0\left(\prod_{i \in I} (Cone\ A_i, S_i)\right)=
\prod_{i \in \{i \in I|i<j\}}\prod_{i \in I} \left(0, S_i \cap S_j\right) \quad (=0).
\end{equation*}
\end{definition}

\begin{lemma}\label{lemma_H} 
Given $M \subset H_0$, its open covering $V=\{S_i \subset M\ |\ i \in I \}$, and locally defined tangent cones 
\begin{equation*}
\prod_{i \in I} (Cone\ A_i, S_i) \in \prod_{i \in I} CONE(S_i).
\end{equation*}
Then, 
\begin{equation*}
\left\{\Psi (Cone\ A_i, S_i)\ |\ i \in I \right\} \text{ is consistent  if } \prod_{i \in I} (Cone\ A_i, S_i) \text{ is closed}
\end{equation*}
(Definition \ref{def_consistancy21}). The opposite is not true.
\end{lemma}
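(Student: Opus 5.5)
The plan is to show that closedness forces the surface maps of $Cone\ A_i$ and $Cone\ A_j$ to coincide on $|S_i \cap S_j|$. Everything downstream of the surface map (sections, vector fields, normal edges, flows) is then the same, so consistency follows. The converse will be refuted by a counterexample built from the $(k,k,k)$ ambiguity.

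First, assume $\prod_{i \in I} (Cone\ A_i, S_i)$ is closed. By Definition \ref{def_exact_and_closed_cones}, this gives
\begin{equation*}
v_{{\partial}Cone\ A_i}(x) = v_{{\partial}Cone\ A_j}(x) \quad \text{for } \forall x \in |S_i \cap S_j|,\ i<j .
\end{equation*}
Fix a triangle $abc \in Tri(S_i \cap S_j)$. By Lemma \ref{lemma_C}, $\sigma_{Cone\ A_i}(abc)$ and $\sigma_{Cone\ A_j}(abc)$ are both given by the triple $PQR$ with $P=v_{{\partial}Cone\ A_\ast}(a)$, $Q=v_{{\partial}Cone\ A_\ast}(b)$ and $R=v_{{\partial}Cone\ A_\ast}(c)$. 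These triples agree, so the two sections agree on $abc$.

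Next, I pass this agreement down the chain of definitions. The edge vectors agree by Definition \ref{def_e_vec}. Hence $\beta_{Cone\ A_i}(e)=\beta_{Cone\ A_j}(e)$ for every edge $e \in Edge(S_i\cap S_j)$, by Lemma \ref{lemma_D}. Both vector fields are consistent by Corollary \ref{cor_D2}. The normal edge of each triangle in Lemma \ref{lemma_B} is determined solely by the values $|\beta(ab)|,|\beta(bc)|,|\beta(ca)|$, so the normal edges in $S_i \cap S_j$ coincide for the two cones.

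The delicate point, and the main obstacle I expect, is the restriction step. $\Psi(Cone\ A_i,S_i)|_{S_i\cap S_j}$ is defined as a restriction of the global flow $\Psi_{Cone\ A_i}$. Its connections inside $S_i\cap S_j$ are exactly the shared edges between triangles of $S_i\cap S_j$ that are not normal. Once the normal edges agree, the two restricted flows have the same connectivity, so they are the same collection of trajectories. I would state this explicitly, observing that the restriction is determined by which internal edges of $S_i\cap S_j$ are connections. This gives the consistency required by Definition \ref{def_consistancy21}.

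For ``the opposite is not true'', I take $S_i=S_j$ to be a single hexagon and $A_j = A_i + (1,1,1)$. Then $v_{{\partial}Cone\ A_j}(x)=v_{{\partial}Cone\ A_i}(x)+(1,1,1)$. Both cones yield the same element of $T(abc)$ modulo $\sim_T$, hence the same $\beta$ and the same flow, so the family is consistent. However, $\delta^0$ equals the nonzero constant $(-1,-1,-1)$, so the family is not closed. Alternatively, one could cite the Penrose stairs of Example \ref{example_Q}.
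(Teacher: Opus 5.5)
Your proposal is correct and follows the paper's proof. The paper treats the forward implication as immediate from the definitions: closedness forces the surface maps, and hence the sections, vector fields, normal edges and flows, to agree on each $S_i \cap S_j$. You spell this out, including the restriction step. The paper refutes the converse with exactly your observation: $Cone\ A$ and the shifted cone with $A_k=\{a+(k,k,k)\mid a\in A\}$ define the same flow, while their surface maps differ by $(k,k,k)$. Your hexagon example with $k=1$ just makes that remark concrete; two distinct overlapping pieces of a covering would serve equally well if you prefer not to repeat a member of the covering.
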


\begin{proof}
Note that 
\begin{equation*}
\Psi(Cone\ A, H_0) = \Psi(Cone\ A_k, H_0),
\end{equation*}
where $A_k= \{a+(k,\ k,\ k)\ |\ a \in A\}$ ($k \in \mathbf{Z}$). That is, different tangent cones may define the same flow on $H_0$. The result follows immediately. 
\end{proof}

\begin{remark}
We can choose $\prod_{i \in I} (Cone\ A_i, S_i)$ in such a way that it is closed if $\sum_{i \in I} \Psi(Cone\ A_i, S_i)$ is an affine flow.
\end{remark}

\begin{example}
In Figure \ref{figure8} (Example \ref{ex_holes} and Example \ref{ex_17}),
\begin{equation*}
\Psi(Cone\ A, M)=\{L_1\} \quad \in FLOW(M).
\end{equation*}
$M$ is covered by an open covering $\{S_{11},\ S_{12}\}$ and 
\begin{align*}
&\iota(Cone\ A, M)= (Cone\ A_{11}, S_{11}) \times  (Cone\ A_{12}, S_{12}), \\
&\delta^0 ((Cone\ A_{11}, S_{11}) \times  (Cone\ A_{12}, S_{12}))=(0, S_{11}\cap S_{12}).
\end{align*}
That is, 
\begin{equation*}
(Cone\ A_{11}, S_{11}) \times  (Cone\ A_{12}, S_{12}) \text{ is closed}.
\end{equation*}
Then, 
\begin{equation*}
\left\{\Psi(Cone\ A_{11}, S_{11}),\ \Psi(Cone\ A_{12}, S_{12})\right\} \text{ is consistent}
\end{equation*}
and
\begin{equation*}
\Psi(Cone\ A, M) =\Psi(Cone\ A_{11}, S_{11})+ \Psi(Cone\ A_{12}, S_{12}),
\end{equation*}
where $A=A_{11}\cup A_{12}$. 
\end{example}

\subsubsection{Singularity analysis of loops}\label{sec328}\ 

Finally, we derive a determining equation for the singularities within a loop. The computation proceeds as follows (Figure \ref{figure10}).

\begin{figure}
\centering
\captionsetup{width=1.0\linewidth}
\includegraphics{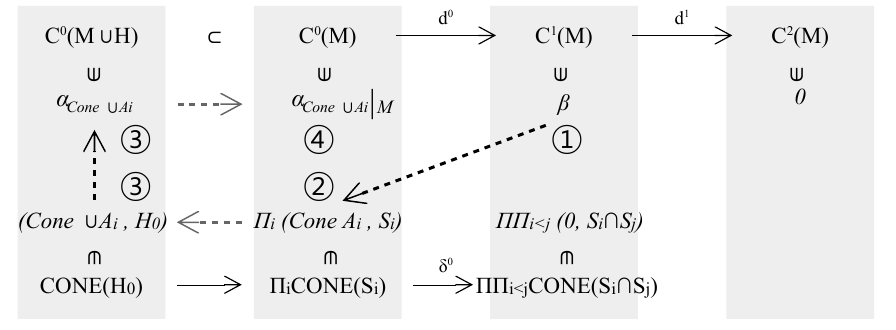}
\caption{Compution of $\alpha_{Cone\  \cup A_i} \in C^0(M \cup H)$ from $\beta \in C^1(M)$.}
\label{figure10}
\end{figure}

\begin{enumerate}
\item \textit{From a loop to a flow on $M$.}\\
Given $L \in LOOP(H_0)$. Define $M \subset H_0$ by
\begin{align*}
Tri(M)&:=\{abc \in Tri(H_0)\ |\  abc \in L\}. \\
Edge(M)&:=\{ ab \subset H\ |\  \exists c \in Ver(H_0) \text{ such that } abc \in Tri(M) \}, \\
Ver(M)&:=\{ a \in H \ |\  \exists b \in Ver(H_0) \text{ such that } ab \in Edge(M) \}.
\end{align*}
Then, 
\begin{align*}
\Psi_0:&=\{ L \} \in FLOW(M),\\
|L|_0&=|M|,
\end{align*}
and $\Psi_0$ is regular on $M$. By Proposition \ref{pro_G}, 
\begin{equation*}
\exists \beta \in C^1(M) \text{ such that } \Psi_0=\Psi_{\beta}
\end{equation*}
(Figure \ref{figure10} second from the right on the top row). Note that $|L|_0$ may have holes inside, i.e., $HOLE(L) \neq \emptyset$.

\item \textit{From a flow on $M$ to a collective affine flow on $M$.}\\
By Proposition \ref{proposition_F}, we have an open covering
\begin{equation*}
V=\{S_i \subset M\ |\ i \in I \}
\end{equation*}
of $M$ and locally defined tangent cones
\begin{equation*}
 \{(Cone\ A_i, S_i) \in CONE(S_i)\ |\ i \in I \}
\end{equation*}
such that
\begin{equation*}
\Psi_0=\sum_{i \in I} \Psi(Cone\ A_i, S_i)
\end{equation*}
(Figure \ref{figure10} second from left on the bottom row). Note that 
${\prod}_{i\in I}(Cone\ A_{i}, S_i)$ may not be closed, i.e., 
``$(CONE\ A_i, S_i)=(CONE\ A_j, S_j)$ on $S_i \cap S_j$'' may not hold true.

\item \textit{From a collective affine flow on $M$ to an affine flow on $M\cup H$}\\
Let $H \subset H_0$ be the submesh consisting of the holes within $L$, i.e.,
\begin{align*}
&Tri(H):= \left\{ abc \in Tri(H_0)\  |\  \exists U \in HOLE(L) \right. \\
&\left. \phantom{Tri(H):= \left\{ abc \in Tri(H_0)\  | \right\}}\quad \text{ such that } |abc| \subset U \right\}. 
\end{align*}
Then, 
\begin{equation*}
|L|=|M \cup H|.
\end{equation*}
Define $Cone\ A \in CONE(H_0)$ by
\begin{equation*}
Cone\ A := Cone\ \bigcup_{i \in I} A_i. 
\end{equation*}
By Lemma \ref{lemma_E}, a regular flow on $H_0$ is obtained by
\begin{equation*}
\Psi_{Cone\ A} \in FLOW(H_0).
\end{equation*}
In particular, 
\begin{equation*}
\Psi_{Cone\ A}|_{M \cup H} \in FLOW(M \cup H)
\end{equation*}
is regular (Figure \ref{figure10} left on the bottom row). 
By Lemma \ref{lem_C3}, we obtain
\begin{equation*}
\alpha_{Cone\ A} \in C^0(M \cup H)
\end{equation*}
(Figure \ref{figure10} left on the top row). 

\item \textit{The determing equation for the singularities within a loop.}\\
Since $\Psi_{Cone\ A}$ is regular on $M \cup H$, we obtrain a loop decomposition of the region $|H|$ if 
\begin{equation*}
\Psi_0=\Psi_{Cone\ A}|_{M} \quad \in FLOW(M)
\end{equation*}
or equivalently
\begin{equation*}
\beta=d^0(\alpha_{Cone\ A}|_{M}) \quad \in C^1(M), 
\end{equation*}
In other words, the regions of $HOLE(L)$ are not singular if the equation holds. 
\end{enumerate}

\begin{figure}
\centering
\captionsetup{width=1.0\linewidth}
\includegraphics{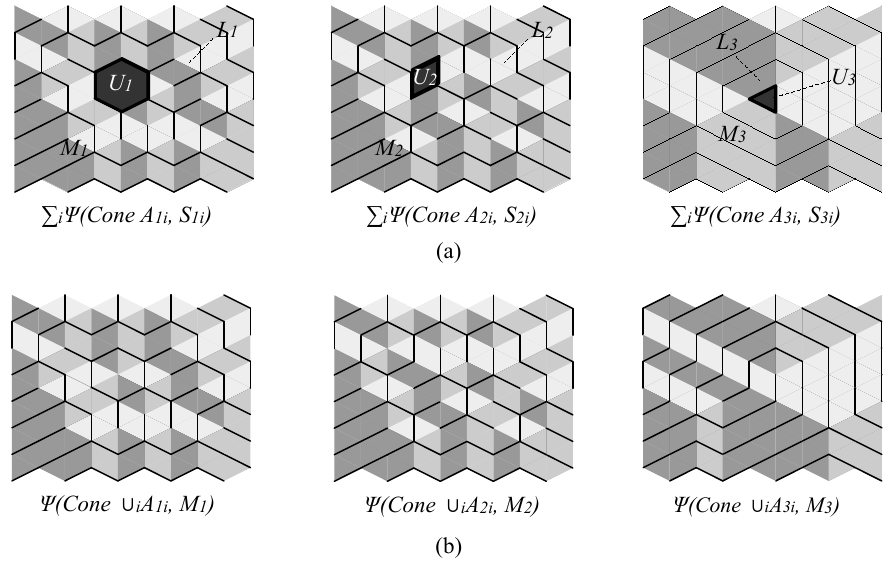}
\caption{Determing equation: (a) Left side $\sum_{i \in I} \Psi(Cone\ A_i, S_i)$. (b) Right side $\Psi_{Cone\ \bigcup_{i \in I} A_i}|_{M}$.}
\label{figure11}
\end{figure}

\begin{proposition}[Determing equation for the singularities within a flow]\label{prop_defeq}
Given $M \subset H_0$, its open covering $V=\{S_i \subset M\ |\ i \in I \}$, and locally defined affine flows 
\begin{equation*}
CAF=\{\Psi(Cone\ A_i, S_i) \in FLOW(S_i)\ |\ i \in I\}.
\end{equation*}
Suppose $CAF$ is consistent. Then, a flow $\Psi_0 \in FLOW(M)$ is obtained by
\begin{equation*}
\Psi_0:=\sum_{i \in I} \Psi(Cone\ A_i, S_i).
\end{equation*}
Equation (\ref{eq_det}) below is called the \textit{determing equation} for the singularities within $\Psi_0$:
\begin{equation}\label{eq_det}
\sum_{i \in I} \Psi(Cone\ A_i, S_i)=\Psi_{Cone\ \bigcup_{i \in I} A_i}|_{M}.
\end{equation}
Then, 
\begin{enumerate}
\item $HOLE(\Psi_0)$ contains no singular region if Equation (\ref{eq_det}) holds. 
\item $HOLE(\Psi_0)$ contains a singular region if ${\prod}_{i\in I}(Cone\ A_{1i}, S_i)$ is closed and Equation (\ref{eq_det}) dose not hold. 
\item $HOLE(\Psi_0)$ contains a singular region if we cannot choose ${\prod}_{i\in I}(Cone\ A_{1i}, S_i)$ in such a way that it is closed. 
\end{enumerate}
\end{proposition}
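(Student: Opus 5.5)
The plan is to treat the three items separately. All three rest on one observation: $\Psi_{Cone\ A}$ with $A=\bigcup_{i\in I}A_i$ is a regular flow on all of $H_0$ (Lemma \ref{lemma_E}), and hence on $M\cup H$, where $H$ is the submesh filling the holes of $|M|$ as in step 3 of Subsection \ref{sec328}.

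\textbf{Item (1).} Assume Equation (\ref{eq_det}). Then $\Psi_0=\Psi_{Cone\ A}|_M$. Since $\Psi_0$ is built from the loop(s) on $M$, no trajectory of $\Psi_{Cone\ A}$ can cross $Bd(M)$ from a triangle of $M$ into a hole. The reason is that the normal edges are determined edgewise and consistently (the Remark after Lemma \ref{lemma_B}, together with Corollary \ref{cor_D2}). The boundary edges of $M$ adjacent to holes are normal edges of the triangles of $\Psi_0$, so they are normal edges of the hole triangles as well.

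Consequently $\Psi_{Cone\ A}|_H$ is a flow on $H$ in which every triangle still has exactly one normal edge and two connecting edges (Lemma \ref{lemma_E}). Its trajectories are therefore closed chains of regular triangles contained in a bounded region, i.e.\ loops. These loops decompose each $U\in HOLE(\Psi_0)$ in the sense of Definition \ref{def_decomp}, so no hole is singular.

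The main obstacle is boundedness: a trajectory inside a finite hole must close up rather than run off. I would argue this by finiteness of $Tri(H)$, since each triangle has exactly two neighbours in its trajectory and so every trajectory is a cycle.

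\textbf{Item (2).} Here I would argue by contraposition. Suppose every hole is loop decomposable. Then $\{L\}$ together with the loop decomposition gives a regular flow on $M\cup H$, and $|M\cup H|$ has no holes. Treating this flow as affine, which is in the spirit of Claim \ref{claim3} and Proposition \ref{pro_G} with $[\beta]=0$, one obtains a single cone whose restrictions to the $S_i$ agree with the given $(Cone\ A_i,S_i)$ up to diagonal shifts $(k,k,k)$ (Lemma \ref{lemma_H}).

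Closedness of $\prod_i(Cone\ A_i,S_i)$ means the surface maps agree on overlaps, so the shifts can be taken to vanish. The surface of $Cone\ \bigcup A_i$ restricted to each $S_i$ should then coincide with that of $Cone\ A_i$, because the maximum over the $A_j$ is attained locally by $A_i$. This gives Equation (\ref{eq_det}), contradicting the hypothesis. The delicate step is showing that the union cone's surface equals the local cone on each $S_i$; I would try to prove it from the formula for $\partial Cone$ as a max–min.

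\textbf{Item (3).} Again by contraposition. If no hole is singular, the global regular flow on $M\cup H$ (loop plus decomposition) restricted to the hexagon covering of Proposition \ref{proposition_F} yields local cones. Taking them as restrictions of one global cone, again relying on Claim \ref{claim3}, makes $\prod_i(Cone\ A_i,S_i)=\iota(Cone\ A,M)$ exact, hence closed. This contradicts the assumption that no closed choice exists.

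I expect the honest weak point of the whole proof to be its dependence on Claim \ref{claim3}, which is unproved in the paper. I would isolate it explicitly as a hypothesis for items (2) and (3).
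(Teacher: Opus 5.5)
\textbf{Item (1) is sound.} It supplies the argument the paper leaves as ``immediate from the definitions''. One step should be made explicit: each triangle of $L$ is regular in $\Psi_{Cone\ A}|_M$ and has exactly two connections in $\Psi_{Cone\ A}$, so both connections lie in $M$. Only then does ``normal for $\Psi_0$'' give ``normal for $\Psi_{Cone\ A}$''.

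\textbf{The step you call delicate in item (2) is false for the representatives $A_i$ the statement allows.} By the max--min formula the union cone is a lower envelope, $\Vert v_{\partial Cone\ \bigcup_j A_j}(x)\Vert=\min_j\Vert v_{\partial Cone\ A_j}(x)\Vert$. Consistency and closedness constrain $A_j$ only through $(Cone\ A_j,S_j)$, so a cone attached to one patch can pass below the surface over another. Here is a concrete counterexample.
\begin{itemize}
\item Let $h$ be a hexagon loop with centre lifted to $c$ and neighbours lifted to $c\pm(1,0,0)$, $c\pm(0,1,0)$, $c\pm(0,0,1)$.
\item Cover $M=|h|$ by $\{M,S\}$, where $S$ is the triangle over $c$, $c+(1,0,0)$, $c-(0,0,1)$.
\item Let $A_M$ be the seven lifted points and $A_S=A_M\cup\{c-(1,0,1)\}$.
\end{itemize}
The extra point is not strictly below any vertex lift of $S$: all three lie on the face $x_2=c_2$ of its orthant. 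Hence $(Cone\ A_S,S)=(Cone\ A_M,S)$, the family is consistent and closed, and $HOLE(h)=\emptyset$. But the extra point is strictly below $c+(0,1,0)$, so the union surface drops at that vertex. The two triangles of $h$ there acquire the spoke as normal edge, the cycle breaks, and (\ref{eq_det}) fails. So item (2) cannot be proved for arbitrary $A_i$; you need a normalization.

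\textbf{How to repair item (2).} Take $A_i$ to be the lattice points $v_{\partial Cone\ A_i}(x)$, $x\in Ver(S_i)$. Suppose some $Cone\ B$ induces $\Psi_0$ on $M$.
\begin{itemize}
\item On each connected $S_i$, the surface of $Cone\ B$ differs from that of $Cone\ A_i$ by a diagonal shift. This is the converse of Lemma \ref{lemma_H}, not Lemma \ref{lemma_H} itself: the normal edge of each triangle fixes its slant lift, so the flow fixes the surface up to $(k,k,k)$.
\item Closedness, with $M$ connected, lets you take all shifts equal; after shifting $B$, the normalized $A_i$ lie on $\partial Cone\ B$.
\item Then $Cone\ \bigcup_i A_i\subset Cone\ B$, while each $v_{\partial Cone\ B}(x)$ with $x\in Ver(M)$ is itself an apex. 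The two surfaces therefore agree at every vertex of $M$, and (\ref{eq_det}) holds.
\end{itemize}

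\textbf{The existence of $Cone\ B$ is a second gap.} Claim \ref{claim3} concerns regular flows on all of $H_0$. The flow formed by $L$ and the hole decompositions is regular only on the bounded region $|L|$. You would additionally need it to extend to a regular flow on $H_0$ (i.e.\ $L$ regular in the paper's sense), and that is not a hypothesis. The clause $[\beta]=0$ of Proposition \ref{pro_G} assumes affineness rather than providing it.

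With that hypothesis stated precisely, item (3) goes through as you wrote it. Use $\iota((Cone\ B,M))$ for the given covering $V$, which is closed for any covering, rather than the hexagon covering of Proposition \ref{proposition_F}.
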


\begin{proof}
It follows immediately from the definitions.
\end{proof}

Figure \ref{figure11} shows the three loops of Figure \ref{figure4} (a). Each of them is explained below.

\begin{example}
In Figure \ref{figure4} (a) left (Example \ref{ex_17} and \ref{ex_18}), 
\begin{align*}
\Psi_0:&=\{L_1\} \quad \in FLOW(M_1), \\
HOLE(L_1)&=\{ U_1 \} \quad (U_1 \subset \mathbf{R}^2).
\end{align*}
By Proposition \ref{proposition_F}, $\exists$an open covering $V=\{S_i \subset M_1\ | \ i \in I\}$ of $M_1$ and locally define affine flows 
\begin{equation*}
\{\Psi(Cone\ A_i, S_i) \in FLOW(S_i)\ |\  i\in I\} 
\end{equation*}
such that
\begin{equation*}
\Psi_0|_{S_i}=\Psi(Cone\ A_i, S_i) \quad (i \in I)
\end{equation*}
(Figure \ref{figure11} (a) left). In this case, we can choose ${\prod}_{i\in I}(Cone\ A_{1i}, S_i)$ in such a way that it is closed, i.e.,
\begin{equation*}
(Cone\ A_{1i}, S_i)=(Cone\ A_{1j}, S_j) \quad \text{on } S_{1i} \cap S_{1j}
\end{equation*}
(because $U_1$ is not singular).
Define $Cone\ A \in CONE(H_0)$ by
\begin{equation*}
Cone\ A := Cone\ \bigcup_{i \in I} A_i 
\end{equation*}
(Figure \ref{figure11} (b) left). Then,  
\begin{equation*}
\Psi_0=\Psi(Cone\  A, M_1) 
\end{equation*}
and $U_1$ is decomposed into a loop of length $6$ by 
\begin{equation*}
\Psi(Cone\  A, M_{U_1}).
\end{equation*}
\end{example}

\begin{example}\label{example_P}
In Figure \ref{figure4} (a) middle (Example \ref{ex_19}), 
\begin{align*}
\Psi_0:&=\{L_2\} \quad \in FLOW(M_2), \\
HOLE(L_2)&=\{ U_2 \} \quad (U_2 \subset \mathbf{R}^2).
\end{align*}
Given an open covering $V=\{S_{21}, S_{22}\}$ of $M_2$ (Figure \ref{figure4} (b) middle). 
Then, 
\begin{equation*}
\exists \{\Psi(Cone\ A_{2i}, S_{2i}) \in FLOW(S_{2i})\ |\  i=1, 2\} 
\end{equation*}
such that 
\begin{equation*}
\prod_{i =1, 2} (Cone\ A_{2i}, S_{2i}) \text{ is closed} 
\end{equation*}
and
\begin{equation*}
\Psi_0|_{S_i}=\Psi(Cone\ A_{2i}, S_{2i}) \quad (i =1, 2).
\end{equation*}
(Figure \ref{figure11} (a) middle).
Define $Cone\ A \in CONE(H_0)$ by
\begin{equation*}
Cone\ A := Cone\ A_{21}\cup A_{22} 
\end{equation*}
(Figure \ref{figure11} (b) middle). Then, $Cone\  A$ gives a loop decomposition of $|M_2 \cup M_{U_2}|$, but
\begin{equation*}
\Psi_0 \neq \Psi(Cone\  A, M_2)
\end{equation*}
(Figure \ref{figure11} (b) left). Therefore, $U_2$ is singular. 
\end{example}

\begin{example}[The \textit{Penrose stairs}]\label{example_Q}
In Figure \ref{figure4} (a) right (Example \ref{ex_20}), 
\begin{align*}
\Psi_0:&=\{L_3\} \quad \in FLOW(M_3), \\
HOLE(L_3)&=\{ U_3 \} \quad (U_3 \subset \mathbf{R}^2).
\end{align*}
Given an open covering $V=\{S_{31}, S_{32}, S_{33}\}$ of $M_3$ (Figure \ref{figure4} (b) right). 
Then, 
\begin{equation*}
\exists \{\Psi(Cone\ A_{3i}\ S_{3i}) \in FLOW(S_{3i})\ |\  i=1, 2, 3\}
\end{equation*}
such that 
\begin{equation*}
\Psi_0|_{S_i}=\Psi(Cone\ A_{3i}, S_{3i}) \quad (i =1, 2, 3).
\end{equation*}
(Figure \ref{figure11} (a) right). In this case, we \underline{cannot} choose 
\begin{equation*}
\prod_{i =1, 2, 3} (Cone\ A_{3i}, S_{3i})
\end{equation*}
in such a way that it is closed. 
Define $Cone\ A \in CONE(H_0)$ by
\begin{equation*}
Cone\ A := Cone\ A_{31}\cup A_{32} \cup A_{33} 
\end{equation*}
(Figure \ref{figure11} (b) right). Then,
\begin{equation}
\Psi_0 \neq \Psi(Cone\  A, M_3)
\end{equation}
(Figure \ref{figure11} (b) right). Therefore, $U_3$ is singular. In this case, $Cone\  A$ dose not give any loop decomposition of $|M_3 \cup M_{U_3}|$. The flow $\Psi_0$ gives an example of the \textit{Penrose stairs} (see Escher’s lithograph ``Ascending and Descending.’’). 
\end{example}

\section{Conclusion}\label{sec4}

\begin{figure}
\centering
\captionsetup{width=1.0\linewidth}
\includegraphics{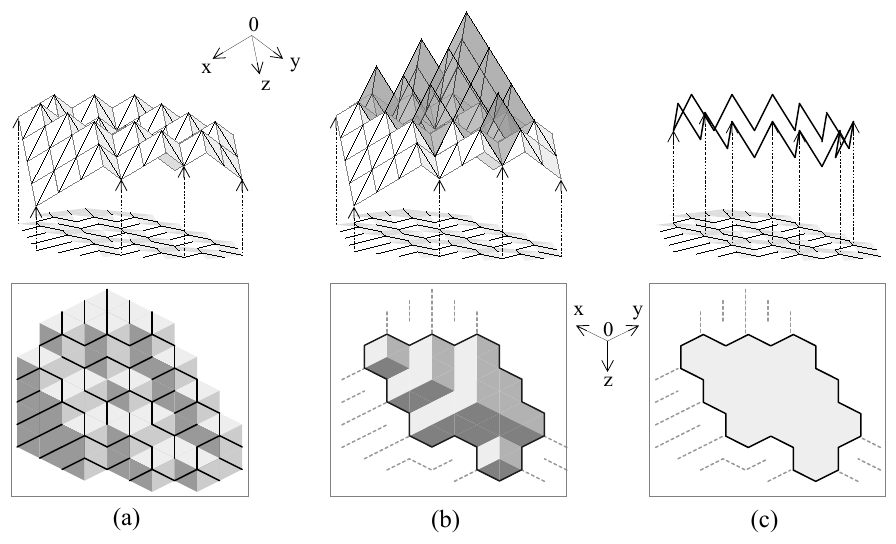}
\caption{(a) Tangent cone. (b) Cotangent cone. (c) The intersection of the two cones.}
\label{figure12}
\end{figure}

In this paper, I considered two questions:
\begin{enumerate}
\item How can we define the shape of a molecule?
\item Is the given loop a molecule?
\end{enumerate}
and proposed two equations to answer these questions:
\begin{enumerate}
\item A defining system of equations for the shape of a molecule, 
\item An equation that determines whether holes within a loop are singular.
\end{enumerate} 
However, we could hardly say that these equations answer the questions.

Regarding the first question, it is unclear how well the system of equations specifies the shape of a molecule. Moreover, some of the permutations of the variables may conserve the shape, but the relation between the permutations and the symmetry of the shape is not yet understood. Furthermore, there is currently no algorithm for solving the system of equations.

Regarding the second question, even if a given loop does not form a molecule, another loop of the same shape may make the region a molecule. If all regular flows are affine (Claim \ref{claim3}), we can compute all regular flows on a region to find a loop without singurarity by placing and/or removing unit cubes (i.e., cube stacking). However, that does not characterize the conditions for the existence of loops without singurarity.

Then, why are we considering these questions in the first place? Here are the motivations behind the research:
\begin{enumerate}
\item We consider the first question to make computers understand shapes (i.e., the ``semantics of shapes''). Our approach defines shapes in a similar way to defining rational numbers. This could allow computers to crunch shapes as well as numbers.
\item We consider the second question to obtain a quantized version of a mathematical theory for studying quantum phenomena in the natural sciences \cite{NM1}. In our case, protein interactions are modeled using a quantized version of differential geometry. There, tangent cones give a quantized version of regular analytic functions.
\end{enumerate}
I believe these motivations give us sufficient reasons for further research.

One of the questions that directly arise from this study is:
Given a system of equations that define the shape of a \underline{molecule complex}, determine whether the system of equations has a solution consisting of a single molecule. If you can answer this question, it will be another answer to the second question above.

Finally, I would like to introduce "cotangent cones" as one of the future research topics. Recall that tangent cones are defined using a lattice spaned by three vectors
\begin{equation*}
(1,\ 0,\ 0), (0,\ 1,\ 0), \text{ and } (0,\ 0,\ 1).
\end{equation*}
Figure \ref{figure12} (a) is a tangent cone and the flow defined by the cone (that defines a molecule). On the other hand, ``cotangent cones'' are defined using another lattice defined by
\begin{equation*}
(0,\ 1,\ 1), (1,\ 0,\ 1), \text{ and } (1,\ 1,\ 0) 
\end{equation*}
Figure \ref{figure12} (b) is a cotangent cone. In this example, by taking the intersection of the two cones, we obtain the boundary of the molecule defined by the tangent cone (Figure \ref{figure12} (c)). That is, we can examine the shapes of molecules witout projecting the ``surface flows'' on a tangent cone onto the hyperplane $H$ (Definition \ref{def_H0}). In general, tangent cones and cotangent cones have more information than the projected image on $H$. However, I do not know how to handle these objects to obtain information about the shapes of molecules and the singularities of loops.

I hope this paper will contribute to the fields of research at the intersection of mathematics and protein science.

\end{document}